\documentclass[12pt]{amsart}
\usepackage{amssymb}
\usepackage[all]{xypic}
\usepackage[all]{xy}
\usepackage{moreverb}

%
%
%

\setlength{\textwidth}{6.5in}
\setlength{\oddsidemargin}{0in}
\setlength{\evensidemargin}{0in}
\setlength{\topmargin}{0in}
\setlength{\headheight}{0.50in}
\setlength{\headsep}{0.50in}
\setlength{\textheight}{8in}
\setlength{\footskip}{0.5in}

\setlength{\topskip}{0in}	

\newcounter{alphalistcntr}

\theoremstyle{plain}
\newtheorem{theorem}{Theorem}[section]
\newtheorem{lemma}[theorem]{Lemma}

\newtheorem{prop}[theorem]{Proposition}

\newtheorem{algorithm}[theorem]{Algorithm}

\theoremstyle{remark}

\newtheorem{remark}[theorem]{Remark}

\newtheorem*{note*}{Note}
\newtheorem*{remark*}{Remark}
\newtheorem*{example*}{Example}

\theoremstyle{definition}
\newtheorem*{definition*}{Definition}


\newcommand{\Z}{\mathbb{Z}}
\newcommand{\R}{\mathbb{R}}
\newcommand{\Q}{\mathbb{Q}}

\newcommand{\N}{\mathbb{N}}

\newcommand{\Image}{\mathrm{Img}}
\newcommand{\Gal}{\mathrm{Gal}}

\newcommand{\op}{\mathrm{op}}

\newcommand{\calO}{\mathcal{O}}
\newcommand{\calM}{\mathcal{M}}
\newcommand{\calT}{\mathcal{T}}

\newcommand{\fra}{\mathfrak{a}}

\newcommand{\frc}{\mathfrak{c}}

\newcommand{\frg}{\mathfrak{g}}
\newcommand{\frp}{\mathfrak{p}}

\newcommand{\coker}{\mathrm{coker}}

\newcommand{\ann}{\mathrm{ann}}
\newcommand{\nr}{\mathrm{nr}}

\title[Computing generators of free modules over orders in group algebras II]{Computing generators of free modules \\ over orders in group algebras II}

\author{Werner Bley}
\address{Werner Bley\\
Fachbereich f\"ur Mathematik und Informatik der Universit\"at Kassel\\
Heinrich-Plett-Str.\ 40\\
34132 Kassel\\
Germany}
\email{bley@mathematik.uni-kassel.de}
\urladdr{http://www.mathematik.uni-kassel.de/$\sim$bley}

\author{Henri Johnston}
\address{Henri Johnston\\ 
St.\ John's College\\
Cambridge CB2 1TP\\
United Kingdom
}
\email{H.Johnston@dpmms.cam.ac.uk}
\urladdr{http://www.dpmms.cam.ac.uk/$\sim$hlj31}

\subjclass[2000]{11R33, 11Y40, 16Z05}
\keywords{}
\date{15th September 2010}

\begin{document}

\begin{abstract}
Let $E$ be a number field and $G$ be a finite group.
Let $\mathcal{A}$ be any $\mathcal{O}_{E}$-order of full rank
in the group algebra $E[G]$ and $X$ be a (left) $\mathcal{A}$-lattice.
In a previous article, we gave a necessary and sufficient condition for $X$ 
to be free of given rank $d$ over $\mathcal{A}$. In the case that (i) the Wedderburn decomposition $E[G] \cong \oplus_{\chi} M_{\chi}$ 
is explicitly computable and (ii) each $M_{\chi}$ is in fact a matrix ring over a field,
this led to an algorithm that either gives elements 
$\alpha_{1}, \ldots, \alpha_{d} \in X$ such that 
$X=\mathcal{A}\alpha_{1} \oplus \cdots \oplus \mathcal{A}\alpha_{d}$ 
or determines that no such elements exist. 
In the present article, we generalise the algorithm by weakening condition (ii) considerably.
\end{abstract}

\maketitle

\section{Introduction}\label{intro}

Let $E$ be a number field and $G$ be a finite group.
Let $\mathcal{A}$ be any $\mathcal{O}_{E}$-order of full rank in the
group algebra $E[G]$ and $X$ be a (left) $\mathcal{A}$-lattice, i.e.\
a (left) $\mathcal{A}$-module that is finitely generated and torsion-free 
over $\mathcal{O}_{E}$.
The main theoretical result of \cite{MR2422318} is a necessary and sufficient condition for 
$X$ to be free of given rank $d$ over $\mathcal{A}$. 
In order to use this criterion
for computational purposes, we had to impose two hypotheses:

\begin{itemize}
\item [(H1)] The Wedderburn decomposition $E[G] \cong \oplus_{\chi} M_{\chi}$,
where each $M_{\chi} = M_{n_\chi}(D_\chi)$ is a matrix ring over a skew field $D_\chi$, 
is explicitly computable.
\item [(H2)] The Schur indices of all $E$-rational irreducible characters of $G$ are equal to 1,
i.e.\ each $D_{\chi}$ above is in fact a number field.
\end{itemize}

Under these hypotheses, an algorithm was given that either computes elements 
$\alpha_{1}, \ldots, \alpha_{d} \in X$ such that 
$X=\mathcal{A}\alpha_{1} \oplus \cdots \oplus \mathcal{A}\alpha_{d}$ 
or determines that no such elements exist. 
In the present article, we generalise this result by retaining hypothesis (H1) but relaxing (H2)
considerably.

Before outlining the new hypothesis (H2\textprime) which replaces (H2), 
we briefly introduce some notation.
Let $D$ be a skew field that is central and finite-dimensional over a number field $F$.
Let $\nr: D \longrightarrow F$ denote the reduced norm map and let 
$\Delta \subseteq D$ be a maximal $\mathcal{O}_{F}$-order. 
Then $\nr(\Delta^{\times})  \subseteq \mathcal{O}_{F}^{\times +}$, where 
$\mathcal{O}_{F}^{\times +}$ is a certain subgroup of finite index in $\mathcal{O}_{F}^{\times}$. 

\begin{itemize}
\item[(H2\textprime)] For every Wedderburn component 
$M_{n_{\chi}}(D_{\chi})$ of $E[G]$, the following conditions hold
(we omit the $\chi$ subscripts and use the notation from above):
\begin{itemize}
\item [(a)] if $nd>1$, then $\Delta$ has the locally free cancellation property (see \S \ref{subsec:loc-free-cancel});
\item [(b)] if $nd>1$, then $\nr(\Delta^{\times}) = \mathcal{O}_{F}^{\times +}$ (see \S \ref{subsec:norm});
\item [(c)] if $\nr(\Delta^{\times}) \neq \mathcal{O}_{F}^{\times +}$ then we can compute a set of generators of $\Delta^{\times}$,\\
else we can compute a set of representatives of 
$\nr: \Delta^{\times} \longrightarrow  \mathcal{O}_{F}^{\times +}$ (see \S \ref{subsec:norm-unit-reps}); and
\item [(d)] we can solve the principal ideal problem for fractional left $\Delta$-ideals 
(see \S \ref{subsec:PIB}).
\end{itemize}
\end{itemize}

In particular, (H2\textprime) holds whenever $E=\Q$ and $|G|<32$, or (H2) holds (for example, $G$ is abelian, dihedral, symmetric on any number of letters, or nilpotent of odd order).
If we assume that $d=1$, then (H2\textprime) is satisfied whenever $E=\Q$ and $G$ is any generalised quaternion group. Furthermore, if $D_{\chi}$ is \emph{not} a totally definite quaternion algebra then (a) and (b) hold; if $D_{\chi}$ is a totally definite quaternion algebra then (c) holds; if 
$D_{\chi}$ is any quaternion algebra then (d) holds; and if $F=\Q$ then (b) holds.
We note that if the full strength of (H2\textprime) does not hold then it may still be possible to run the algorithm and find generators if they exist, though this is not guaranteed
(see Remark \ref{rmk:slightly-weaker}).
For a detailed discussion of (H2\textprime) and the conditions under which it is satisfied, we refer the reader to \S \ref{sec:hypotheses}. 

The original motivation for this work comes from the following special case.
Let $L/K$ be a finite Galois extension of number fields with Galois group $G$
such that $E$ is a subfield of $K$ and put $d=[K:E]$. One can take $X=\mathcal{O}_{L}$
and $\mathcal{A}=\mathcal{A}( E[G]; \mathcal{O}_{L}) 
:= \{ \lambda \in E[G] \mid \lambda\mathcal{O}_{L} \subseteq \mathcal{O}_{L} \}$.
The application of the algorithm to this special situation is implemented in Magma (\cite{MR1484478}) 
under certain extra hypotheses when $K=E=\Q$ (see \S \ref{sec:comp-results}). 
The source code and input files are available from
\texttt{http://www.mathematik.uni-kassel.de/$\sim$bley/pub.html}.
For further discussion of the motivating special case and a review of the relevant literature, we refer the reader to the introduction of \cite{MR2422318}.

\section{A necessary and sufficient condition for freeness}

We briefly recall (with some minor differences and corrections) relevant notation and results from \cite[\S 2]{MR2422318}. For further background material we refer the reader to \cite{MR1972204}.

Let $E$ be a number field with ring of integers $\mathcal{O}_{E}$ and let $G$ be a finite group. 
Let $\mathcal{A}$ be any $\mathcal{O}_{E}$-order in the group algebra $A:=E[G]$, and let 
$\mathcal{M}$ denote some fixed maximal $\mathcal{O}_{E}$-order in $A$ containing $\mathcal{A}$.
(In fact, the results of this section still hold if $A$ is replaced by any finite-dimensional semisimple 
$E$-algebra.)

If $\mathfrak{p}$ is a prime of $\mathcal{O}_{E}$, we write $\mathcal{O}_{E,\mathfrak{p}}$ for the
localisation (not completion) of $\mathcal{O}_{E}$ at $\mathfrak{p}$. 
More generally, if $M$ is an $\mathcal{O}_{E}$-module, we write 
$M_{\mathfrak{p}} := \mathcal{O}_{E,\mathfrak{p}} \otimes_{\mathcal{O}_{E}} M$
for the localisation of $M$ at $\mathfrak{p}$. 
Let $X$ be a left $\mathcal{A}$-lattice, i.e.\ a left $\mathcal{A}$-module that is finitely generated and torsion-free over $\mathcal{O}_{E}$. Then we say that $X$ is locally free of rank $d \in \N$ if for every prime
$\mathfrak{p}$ of $\mathcal{O}_{E}$, we have $X_{\mathfrak{p}}$ free of rank $d$ over $\mathcal{A}_{\mathfrak{p}}$. 
We set 
$
\mathcal{M}X := \{ \sum_{i=1}^{r} \lambda_{i} x_{i} \mid \lambda_{i} \in \mathcal{M}, x_{i} \in X, r \in \N \}
$,
which is an $\mathcal{O}_{E}$-submodule of the $E$-vector space $E \otimes_{\mathcal{O}_{E}} X$.
If $X$ is locally free over $\mathcal{A}$ then we can (and often do) identify $\mathcal{M}X$ with 
$\mathcal{M} \otimes_{\mathcal{A}} X$.

Let $e_{1}, \ldots, e_{r}$ denote the primitive central idempotents of $A$. Setting $A_{i}:=Ae_{i}$
and $\mathcal{M}_{i} := \mathcal{M}e_{i}$, we have decompositions
\[
A=A_{1} \oplus \cdots \oplus A_{r} \quad \textrm{ and } \quad 
\mathcal{M} = \mathcal{M}_{1} \oplus \cdots \oplus \mathcal{M}_{r}.
\]
Let $\mathfrak{f}$ be any full two-sided ideal of $\mathcal{M}$ contained in $\mathcal{A}$.
Then we have
$\mathfrak{f} \subseteq \mathcal{A} \subseteq \mathcal{M} \subseteq A$. 
Set $\overline{\mathcal{M}} := \mathcal{M}/\mathfrak{f}$ and 
$\overline{\mathcal{A}} := \mathcal{A}/\mathfrak{f}$ so that 
$\overline{\mathcal{A}} \subseteq \overline{\mathcal{M}}$ are finite rings, and denote the canonical
map $\mathcal{M} \longrightarrow \overline{\mathcal{M}}$ by $m \mapsto \overline{m}$.
Note that we have decompositions
\[
\mathfrak{f} = \mathfrak{f}_{1} \oplus \cdots \oplus \mathfrak{f}_{r} 
\quad  \textrm{ and } \quad \overline{\mathcal{M}} =  
\overline{\mathcal{M}_{1}} \oplus \cdots \oplus \overline{\mathcal{M}_{r}},
\]
where each $\mathfrak{f}_{i}$ is a non-zero ideal of $\mathcal{M}_{i}$ and 
$\overline{\mathcal{M}_{i}} := \mathcal{M}_{i} / \mathfrak{f}_{i}$.

Now fix $d \in \N$, and for the rest of this section suppose $1 \leq i \leq r$ and $1 \leq j \leq d$.
(We shall now abuse notation slightly by not distinguishing between a noncommutative ring $R$ and its opposite ring $R^{\op}$ since they are equal as sets - see \cite[top of p.839]{MR2422318}.)
For each $i$, let $U_{i} \subset GL_{d}(\mathcal{M}_{i})^{}$ denote a set of representatives of the image of the
natural projection $GL_{d}(\mathcal{M}_{i})^{} \longrightarrow GL_{d}(\overline{\mathcal{M}_{i}})^{}$.
We now recall without proof \cite[Corollary 2.4]{MR2422318}, which is the key theoretical result leading to Algorithm \ref{alg:the-alg}. 

\begin{theorem}\label{thm:free-quotient}
Let $X$ be an $\mathcal{A}$-lattice. Suppose that
\renewcommand{\labelenumi}{(\alph{enumi})}
\begin{enumerate}
\item $X$ is a locally free $\mathcal{A}$-lattice of rank $d$, and
\item for each $i$, there exist $\beta_{i,1}, \ldots, \beta_{i,d}$ such that
$\mathcal{M}_{i}X = \mathcal{M}_{i}\beta_{i,1} \oplus \cdots \oplus 
 \mathcal{M}_{i}\beta_{i,d}$.
\end{enumerate}
Then $X$ is free of rank $d$ over $\mathcal{A}$ if and only if
\begin{enumerate}
\setcounter{enumi}{2}
\item there exist $\lambda_{i} \in U_{i}$ such that each $\alpha_{j} \in X$, where 
$\alpha_{j}  := \sum_{i=1}^{r} \alpha_{i,j}$ \\ and
$(\alpha_{i,1}, \ldots , \alpha_{i,d})^{\mathrm{T}} := 
\lambda_{i} (\beta_{i,1}, \ldots , \beta_{i,d})^{\mathrm{T}}$.
\end{enumerate}
Further, when this is the case, 
$X = \mathcal{A}\alpha_{1} \oplus \cdots \oplus \mathcal{A}\alpha_{d}$.
\end{theorem}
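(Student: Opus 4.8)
The plan is to reduce the assertion to a finite comparison modulo the conductor-type ideal $\frf$, using that the overlattice $\calM X$ is already free over $\calM$ with the explicit basis furnished by hypothesis (b). I would first record the easy preliminary facts. Summing the decompositions in (b) over $i$ shows that $\beta_{j} := \sum_{i=1}^{r}\beta_{i,j}$, $1 \le j \le d$, is an $\calM$-basis of $\calM X = \bigoplus_{i}\calM_{i}X$. Since $\frf$ is a two-sided ideal of $\calM$ we have $\frf\calM = \frf$, hence $\frf\calM X = \frf X$; thus $\frf X \subseteq X \subseteq \calM X$, and reducing modulo $\frf X$ gives $\calM X/\frf X \cong \overline{\calM}^{d}$ with $X/\frf X$ a finite $\overline{\calA}$-submodule. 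Finally I would note that whether a given $\alpha_{j}$ lies in $X$ depends only on the image of $\lambda_{i}$ in $GL_{d}(\overline{\calM_{i}})$: changing $\lambda_{i}$ within $\ker(GL_{d}(\calM_{i}) \to GL_{d}(\overline{\calM_{i}}))$ moves $\alpha_{j}$ by an element of $\sum_{k}\frf_{i}\beta_{i,k} \subseteq \frf\calM X = \frf X \subseteq X$. This legitimises working with a set $U_{i}$ of representatives, so below I treat arbitrary tuples $\lambda_{i} \in GL_{d}(\calM_{i})$.

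For ``(c) $\Rightarrow$ $X$ free'' and the final assertion: suppose $\lambda_{i} \in GL_{d}(\calM_{i})$ satisfy $\alpha_{j} \in X$ for all $j$. The block matrix $\Lambda := \mathrm{diag}(\lambda_{1},\dots,\lambda_{r})$ lies in $GL_{d}(\calM) = \prod_{i}GL_{d}(\calM_{i})$ and carries $(\beta_{j})$ to $(\alpha_{j})$, so $(\alpha_{j})$ is again an $\calM$-basis of $\calM X$; in particular it is an $A$-basis of $E\otimes_{\OE}X$ (tensor with $E$), each $\alpha_{j}$ has trivial $\calA$-annihilator, and $\frf X = \frf\calM X = \bigoplus_{j}\frf\alpha_{j}$. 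Hence $\bigoplus_{j}\frf\alpha_{j} = \frf X \subseteq \bigoplus_{j}\calA\alpha_{j} \subseteq X$. The map $\calA^{d}\to\bigoplus_{j}\calA\alpha_{j}$ is an isomorphism (surjective by construction, injective since the $\alpha_{j}$ are $A$-linearly independent) that identifies $\bigoplus_{j=1}^{d}\frf$ with $\frf X$, so $[\bigoplus_{j}\calA\alpha_{j}:\frf X] = |\calA/\frf|^{d} = |\overline{\calA}|^{d}$. On the other hand, since $X/\frf X$ is finite it equals the sum of its localisations, and using $X_{\frp}\cong\calA_{\frp}^{d}$ from hypothesis (a) gives $X/\frf X \cong \bigoplus_{\frp}(\calA_{\frp}/\frf_{\frp})^{d}$, so $[X:\frf X] = |\overline{\calA}|^{d}$ as well. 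Thus $\bigoplus_{j}\calA\alpha_{j}$ and $X$ have the same finite index over $\frf X$ and one contains the other, so they coincide: $X = \calA\alpha_{1}\oplus\cdots\oplus\calA\alpha_{d}$ is free of rank $d$.

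For the converse, if $X = \bigoplus_{j}\calA\gamma_{j}$ is free of rank $d$ then $\calM X = \calM\otimes_{\calA}X$ is free over $\calM$ with basis $(\gamma_{j})$; comparing with $(\beta_{j})$ yields $\Lambda_{0} = \mathrm{diag}(\mu_{1},\dots,\mu_{r}) \in GL_{d}(\calM)$ with $(\gamma_{j})^{\mathrm{T}} = \Lambda_{0}(\beta_{j})^{\mathrm{T}}$, and replacing each $\mu_{i}$ by the representative $\lambda_{i}\in U_{i}$ of its class in $GL_{d}(\overline{\calM_{i}})$ changes each $\gamma_{j}$ by an element of $\frf X\subseteq X$ by the opening paragraph, so the resulting $\alpha_{j}$ still lie in $X$; this is condition (c). The one genuinely delicate point is the index identity $[X:\frf X] = |\overline{\calA}|^{d}$: it rests on local freeness of $X$ together with multiplicativity of the index over the finitely many primes dividing $\frf$, and is exactly where hypothesis (a) is used. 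Everything else is bookkeeping with the fibre structure $\frf \subseteq \calA \subseteq \calM$ and with change of basis over $\calM$.
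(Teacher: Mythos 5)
Your proof is correct: the well-definedness of condition (c) under change of representatives modulo $\mathfrak{f}$, the change of basis over $\mathcal{M}$ showing the $\alpha_{j}$ form an $\mathcal{M}$-basis of $\mathcal{M}X$, and the index count $[X:\mathfrak{f}X]=|\overline{\mathcal{A}}|^{d}=[\oplus_{j}\mathcal{A}\alpha_{j}:\mathfrak{f}X]$ (the only place local freeness is needed) together give both directions and the final decomposition. Note that the paper itself does not prove this statement but recalls it from \cite[Corollary 2.4]{MR2422318}; your argument is essentially the same counting argument used there, differing only in that you compare indices above $\mathfrak{f}X$ rather than comparing generalised module indices of $X$ and $\oplus_{j}\mathcal{A}\alpha_{j}$ inside $\mathcal{M}X$.
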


\section{The main algorithm}\label{sec:the-alg}

Let $E$ be a number field and let $G$ be a finite group.
Let $\mathcal{A}$ be any $\mathcal{O}_{E}$-order of full rank in the group algebra $E[G]$ and let $X$ be a left $\mathcal{A}$-lattice. In this section, we give the outline of an algorithm based on Theorem \ref{thm:free-quotient} (i.e.\ \cite[Corollary 2.4]{MR2422318}) that 
either computes elements $\alpha_{1}, \ldots, \alpha_{d} \in X$
such that  $X = \mathcal{A} \alpha_{1} \oplus \cdots \oplus \mathcal{A} \alpha_{d}$,
or determines that no such elements exist. In other words, the algorithm 
determines whether $X$ is free over $\mathcal{A}$, and if so, computes
explicit generators. 

We require the hypotheses (H1) and (H2\textprime) formulated in the introduction. 
We discuss the conditions under which these hypotheses hold in \S \ref{sec:hypotheses}. 
The sketch of the algorithm given here is essentially the same as \cite[Algorithm 3.1]{MR2422318}; 
the main work in the present article is in generalising the detailed versions of steps (5) and (7).

We assume that both $\mathcal{A}$ and $X$ are given by $\mathcal{O}_{E}$-pseudo-bases
as described, for example, in \cite[Definition 1.4.1]{MR1728313}. 
In other words, $X = \mathfrak{a}_{1} w_{1} \oplus \cdots \oplus \mathfrak{a}_{m} w_{m}$ 
where each $\mathfrak{a}_{i}$ is fractional ideal of $\mathcal{O}_{E}$ and each 
$w_{i} \in V := E \otimes_{\mathcal{O}_{E}} X$. 
Similarly, 
$\mathcal{A} = \mathfrak{b}_{1} \lambda_{1} \oplus \cdots \oplus \mathfrak{b}_{n} \lambda_{n}$
with fractional $\mathcal{O}_{E}$-ideals $\mathfrak{b}_{i}$ and $\lambda_{i} \in E[G]$.
Furthermore, we assume that $V$ is given by an $E$-basis $v_{1}, \ldots, v_{m}$ together with
matrices $A(\sigma) \in GL_{m}(E)$ for each $\sigma \in G$ describing the action of $G$ with respect
to $v_{1}, \ldots, v_{m}$.

\begin{algorithm}\label{alg:the-alg}
Input: $\mathcal{A}$ and $X$ as above.
\renewcommand{\labelenumi}{(\arabic{enumi})}
\begin{enumerate}
\item Compute $d := \dim_{E} (V)/|G|$ and check that $d \in \N$.

\item Compute a maximal $\mathcal{O}_{E}$-order $\mathcal{M}$ in $E[G]$ containing $\mathcal{A}$.

\item Compute the central primitive idempotents $e_{i}$ and the components 
$\mathcal{M}_{i}:=\mathcal{M} e_{i}$.

\item Compute the conductor $\mathfrak{c}$ of $\mathcal{A}$ in $\mathcal{M}$ and the components
$\mathfrak{c}_{i} := \mathfrak{c} e_{i}$. \\ Then compute the ideals 
$\mathfrak{g}_{i} := \mathfrak{c}_{i} \cap \mathcal{O}_{E_{i}}$ and $\mathfrak{f}_{i} := \frg_{i} \mathcal{M}_{i}$ 
for each $i$.

\item For each $i$, we try to compute $\beta_{i,1}, \ldots, \beta_{i,d}$ such that 
$\mathcal{M}_{i} X = 
\mathcal{M}_{i} \beta_{i,1} \oplus \cdots \oplus \mathcal{M}_{i} \beta_{i,d}$. If
such $\beta_{i,1}, \ldots, \beta_{i,d}$ do not exist, we terminate with 
`$\mathcal{M}X$ not free over $\mathcal{M}$'.

\item Check that $X$ is locally free of rank $d$ over $\mathcal{A}$.

\item For each $i$, compute a set of representatives 
$U_{i} \subset GL_{d}(\mathcal{M}_{i})^{}$ 
of the image of the natural projection map 
$GL_{d}(\mathcal{M}_{i})^{} \longrightarrow 
GL_{d}(\overline{\mathcal{M}_{i}})^{}$,
where $\overline{\mathcal{M}_{i}} := \mathcal{M}_{i} / \mathfrak{f}_{i}$.

\item Try to find  a tuple $(\lambda_{i}) \in \prod_{i=1}^{r} U_{i}$ such that 
that each $\alpha_{j} \in X$, where 
$\alpha_{j} := \sum_{i=1}^{r} \alpha_{i,j}$ \\ and
$(\alpha_{i,1}, \ldots , \alpha_{i,d})^{\mathrm{T}} := 
\lambda_{i} (\beta_{i,1}, \ldots , \beta_{i,d})^{\mathrm{T}}$.
For such a tuple, 
$X = \mathcal{A}\alpha_{1} \oplus \cdots \oplus \mathcal{A}\alpha_{d}$.
If no such tuple exists terminate with `$X$ not free over $\mathcal{A}$'.

\end{enumerate}
\end{algorithm}

Before commenting on the individual steps, we remark that steps (1) to (4) can be performed
in full generality without assuming hypotheses (H1) or (H2\textprime).

\renewcommand{\labelenumi}{(\arabic{enumi})}
\begin{enumerate}

\item If we replace $E[G]$ by a finite-dimensional semisimple $E$-algebra $A$ (see Remark \ref{rmk:fdss}), 
then we define $d := \dim_{E} (E \otimes_{\mathcal{O}_{E}} X)/ \dim_{E} (A)$. 

\item
An algorithm for computing $\mathcal{M}$ is described in
\cite[Kapitel 3 and 4]{friedrichs}.

\item
Each central primitive idempotent corresponds to an irreducible $E$-character $\chi_{i}$ 
and we have $e_{i} = \frac{n_{i}}{|G|} \sum_{g \in G} \chi_i(g^{-1})g$ with 
$n_{i}=\chi_{i}(1)$. If we replace $E[G]$ by a finite-dimensional semisimple $E$-algebra $A$, 
then we can use the algorithm of \cite[\S 2.4]{EberlyThesis}.

\item
In practise, we compute some multiple of the conductor. For example, one can use 
the method outlined in \cite[3.2 (f) and (g)]{MR2282916}. Also see \cite[Remark 3.3]{MR2282916}.

\item
This step is described in \S \ref{sec:mods-over-max}, using the results of \S \ref{sec:alg-max-div}.

\item 
Successful completion of step (5) shows that $\calM X$ is a free $\calM$-module
of rank $d$. Therefore $X$ is locally free of rank $d$ over $\mathcal{A}$
except possibly at the (finite number of) primes of $\mathcal{O}_{E}$ dividing the 
generalised module index $[\mathcal{M}:\mathcal{A}]_{\mathcal{O}_{E}}$
(if $\mathcal{O}_{E}[G] \subseteq \mathcal{A}$, then all such primes must divide $|G|$).
An algorithm to compute local basis elements (and thus to check local freeness) 
at these primes is given in \cite[\S 4.2]{MR2564571}. 

\item
This step is described in \S \ref{sec:enunits}.

\item
The number of tests for this step can be greatly reduced 
by using the method described in \cite[\S 7]{MR2422318}.
\end{enumerate}

\begin{remark}
Suppose $X$ is a finitely generated $\mathcal{O}_{E}[G]$-module in a free $E[G]$-space 
$V = E \otimes_{\mathcal{O}_{E}} X$ (for example, $L/K$ is a finite Galois extension of number fields
with $E \subseteq K$, $G=\Gal(L/K)$ and $X=\mathcal{O}_{L}$). Then it is often necessary to 
first compute the associated order 
$\mathcal{A} = \mathcal{A}(E[G]; X) := \{ \lambda \in E[G] \mid \lambda X \subseteq X \}$,
over which we wish to work (this is the only $\mathcal{O}_{E}$-order in $E[G]$ over which $X$ can possibly be free). 
This can be done by the method described in \cite[\S 4]{MR2422318}.
\end{remark}

\section{Hypotheses (H1) and (H2\textprime)}\label{sec:hypotheses}

We recall and discuss the hypotheses (H1) and (H2\textprime) required for Algorithm \ref{alg:the-alg}.

\subsection{Explicitly computing Wedderburn decompositions - (H1)} We note that satisfying
(H1) is equivalent to explicitly finding all irreducible $E[G]$-modules up to isomorphism. 
If $G$ is abelian, the required isomorphism can be explicitly computed using the character table. 
For $G$ non-abelian, many decompositions can be found in the literature or `by hand'. 
In general, explicitly computing Wedderburn decompositions of group algebras is a problem of major interest in its own right; we refer the reader to the Magma documentation for a survey of some of the methods currently implemented.

\subsection{The Eichler condition}\label{subsec:Eichler}
Let $F$ be a number field and let $A$ be a central simple $F$-algebra.
We say that $A$ satisfies the Eichler condition relative to $\mathcal{O}_{F}$ 
and write `$A$ is Eichler/$\mathcal{O}_{F}$' if and only if $A$ is \emph{not} a totally definite quaternion algebra (see \cite[(45.5)(i)]{MR892316} or \cite[(34.4)]{MR1972204}). 
More generally, if $A$ is a finite-dimensional semisimple $F$-algebra
we say that $A$ is Eichler/$\mathcal{O}_{F}$ if and only if each Wedderburn component 
$A_{i}$ is Eichler/$\mathcal{O}_{F_{i}}$, where $F_{i}$ is the centre of $A_{i}$.

\subsection{The locally free cancellation property - (H2\textprime)(a)}\label{subsec:loc-free-cancel}

Let $F$ be a number field and let $\Lambda$ be an $\mathcal{O}_{F}$-order in a finite-dimensional semisimple  $F$-algebra $A$. Then we say that $\Lambda$ has locally free cancellation if for any locally free finitely generated left $\Lambda$-modules $X$ and $Y$ we have
\[
X \oplus \Lambda^{(k)} \cong Y \oplus \Lambda^{(k)} \textrm{ for some } k \implies X \cong Y.
\]

The Jacobinski Cancellation Theorem says that if $A$ is Eichler/$\mathcal{O}_{F}$ then $\Lambda$ has locally free cancellation (see \cite[(51.24)]{MR892316}). 
It therefore remains to consider the case where $A$ is not Eichler/$\mathcal{O}_{F}$, i.e.\ at least one of the Wedderburn components $A_{i}$ is a totally definite quaternion algebra. 

We now restrict to the case that $\Lambda$ is a maximal $\mathcal{O}_{F}$-order in $A$. 
Note that $\Lambda$ has locally free cancellation if and only if all of 
the corresponding maximal orders $\Lambda_{i}$
in each Wedderburn component $A_{i}$ have locally free cancellation (for example, use Fr\"ohlich's result \cite[(51.26)]{MR892316}). Hence we may restrict further to the case that $A$ is a totally definite quaternion algebra and use the complete classification of maximal orders in such algebras with locally free cancellation given in \cite{MR2244802}. However, we take a different approach better suited to consideration of group algebras.

Let $E$ be a number field and $G$ be a finite group. 
We wish to give criteria for $E[G]$ to satisfy (H2\textprime)(a) in terms of conditions on $G$. 
Fix a Wedderburn component $M_{n}(D)=M_{n_{\chi}}(D_{\chi})$ of $E[G]$. 
Let $F$ be the centre of $D$ and fix a maximal $\mathcal{O}_{F}$-order $\Delta \subseteq D$.
Recall that (H2\textprime)(a) requires that $\Delta$ has locally free cancellation if $nd>1$. 
(Note that this is independent of the choice of $\Delta \subseteq D$ - see \cite[Proposition 9]{MR2244802}.)
If $n>1$, then $M_{n}(D)$ is Eichler/$\mathcal{O}_{F}$ and so any $\mathcal{O}_{F}$-order in
$M_{n}(D)$ has locally free cancellation;
however, we still require that $\Delta \subseteq D$ has locally free cancellation, which is not necessarily the case. 
On the other hand, assuming that $d>1$, a necessary condition for (H2\textprime)(a) to hold is that any maximal $\mathcal{O}_{E}$-order in $E[G]$ has locally free cancellation. For any $d \in \N$, this condition is sufficient if $n_{\chi}=1$
for each $\chi$ such that $D_{\chi}$ is a totally definite quaternion algebra.

Let $\Lambda$ be a maximal $\mathcal{O}_{E}$-order in $E[G]$. In light of the above discussion, 
we consider criteria on $G$ for $\Lambda$ to have locally free cancellation.
Let $Q_{4n}$ denote the generalised quaternion group of order $4n$,
and let $E_{24}, E_{48}, E_{120}$ denote the binary tetrahedral, octahedral and icosahedral 
groups of orders $24$, $48$ and $120$, respectively.
Then by \cite[(51.3)]{MR892316} (where $Q_{4n}$, $E_{24}, E_{48}, E_{120}$ are denoted by $Q_{n}, \tilde{T}, \tilde{O}, \tilde{I}$, respectively) 
$E[G]$ is Eichler/$\mathcal{O}_{E}$ 
(and so $\Lambda$ has locally free cancellation) if $G$ has no quotient isomorphic to 
$Q_{4n}$ ($n \geq 2$), $E_{24}$, $E_{48}$, or $E_{120}$.
In the case $E=\Q$, we have the following result due to Swan.

\begin{theorem}[{\cite[Theorem II]{MR703486}}]\label{thm:swan-max-cancel}
Let $G$ be a binary polyhedral group and let $\Lambda$ be a maximal order in $\Q[G]$.
Then $\Lambda$ has locally free cancellation if and only if $G$ is one of the
following $11$ groups:
$
Q_{8}, Q_{12}, Q_{16}, Q_{20}, Q_{24}, Q_{28}, Q_{36}, Q_{60}, E_{24}, E_{48}, E_{120}.
$
\end{theorem}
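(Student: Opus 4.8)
The plan is to use the multiplicativity of the cancellation property over Wedderburn components in order to reduce to a single totally definite quaternion component, and then to appeal to Eichler's analysis of such algebras together with the explicit classification of the relevant maximal orders.

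First I would decompose $\Q[G] = A_{1} \oplus \cdots \oplus A_{r}$ into Wedderburn components, so that correspondingly $\Lambda = \Lambda_{1} \oplus \cdots \oplus \Lambda_{r}$ with each $\Lambda_{i}$ a maximal $\Z$-order in $A_{i}$. By Fr\"ohlich's result \cite[(51.26)]{MR892316}, $\Lambda$ has locally free cancellation if and only if each $\Lambda_{i}$ does, and by the Jacobinski Cancellation Theorem \cite[(51.24)]{MR892316} this is automatic whenever $A_{i}$ is Eichler, i.e.\ whenever $A_{i}$ is not a totally definite quaternion algebra. So the problem reduces to deciding, for each binary polyhedral group $G$, whether every totally definite quaternion Wedderburn component of $\Q[G]$ admits a maximal order with locally free cancellation.

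Next I would make the relevant components explicit. For $G = Q_{4n}$ the central element $z$ of order $2$ acts trivially exactly on the components that factor through $Q_{4n}/\langle z \rangle$, a dihedral group of order $2n$; these components are commutative or matrix rings over totally real fields, hence Eichler, and so may be ignored. The remaining components are quaternion algebras over totally real subfields of $\Q(\zeta_{2n})$, and, because $Q_{4n}$ sits inside the unit quaternions, a Frobenius--Schur indicator computation shows that each of them is in fact totally definite. For $G = E_{24}, E_{48}, E_{120}$ one instead lists the finitely many totally definite quaternion components of $\Q[G]$ directly from the character table.

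It then remains to decide, for each totally definite quaternion algebra $A_{i}$ that occurs, whether a maximal order $\Lambda_{i} \subseteq A_{i}$ has locally free cancellation; by \cite[Proposition 9]{MR2244802} the answer does not depend on the choice of $\Lambda_{i}$. Here I would invoke Eichler's description of the ideal class set of a totally definite quaternion order via the mass formula and the spinor genus --- equivalently, the classification in \cite{MR2244802} --- according to which cancellation holds precisely when a certain class number invariant of $A_{i}$ is trivial. For the three exceptional groups and for the quaternion groups $Q_{4n}$ with $n \in \{2,3,4,5,6,7,9,15\}$ this is a finite check that confirms cancellation. The hard part, and the real content of the theorem, is the converse: showing that for every other $n$ the ``top'' totally definite quaternion component of $\Q[Q_{4n}]$, the one with centre $\Q(\zeta_{2n} + \zeta_{2n}^{-1})$, fails cancellation. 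I expect this to require a lower bound on the relevant class number that grows with $n$ --- producing a stably free but non-free locally free module --- together with an effective discriminant estimate of Minkowski or Odlyzko type to reduce ``all sufficiently large $n$'' to the explicit finite list above. Assembling the two directions then yields exactly the eleven groups stated.
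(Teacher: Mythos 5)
This statement is not proved in the paper at all: it is Swan's Theorem II, quoted verbatim with the citation \cite{MR703486}, so there is no internal argument to compare yours against. Judged on its own terms, your proposal is a sensible high-level outline of how such a theorem gets proved (and is roughly how Swan, and later Hallouin--Maire in \cite{MR2244802}, proceed): reduce via Fr\"ohlich \cite[(51.26)]{MR892316} and Jacobinski \cite[(51.24)]{MR892316} to the totally definite quaternion Wedderburn components, note independence of the choice of maximal order, and then decide cancellation component by component via class-number / mass-formula data.

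The gap is that the decisive content is never actually argued. For the ``if'' direction you say the eleven listed groups are handled by ``a finite check that confirms cancellation,'' but you neither identify the totally definite quaternion components of $\Q[G]$ for each of these groups nor verify the cancellation criterion (injectivity of the reduced-norm map from the ideal class set to the appropriate ray class group) for any of them. For the ``only if'' direction --- which you correctly identify as the real content --- you offer only an expectation: a class-number lower bound growing with $n$ plus a Minkowski/Odlyzko-type discriminant estimate. No such bound is stated or proved, no stably free non-free module is exhibited, and the reduction of ``all sufficiently large $n$'' to a finite list is not carried out; so as written the converse is asserted, not proved. There is also a circularity concern: invoking the classification of \cite{MR2244802} essentially imports the quaternionic half of Swan's theorem (that paper postdates and subsumes it for maximal orders), and even granting it you would still need to match the components of $\Q[G]$ against that list, which the proposal does not do. Finally, be careful with the criterion itself: cancellation for a maximal order in a totally definite quaternion algebra is not ``a class number invariant is trivial'' in the naive sense of class number one; it is the statement that stably free implies free, i.e.\ that the reduced norm is injective on the class set, and your argument would need to work with that precise formulation.
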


This leads to the following useful result.

\begin{lemma}\label{lemma:32-cancellation}
Let $G$ be any group with $|G|<32$. Then $\Q[G]$ satisfies \emph{(H2\textprime)(a)}.
\end{lemma}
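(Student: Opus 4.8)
The plan is to reduce the verification of (H2$'$)(a) for $\Q[G]$ with $|G|<32$ to a short, explicit check. Recall that (H2$'$)(a) concerns only those Wedderburn components $M_{n}(D)$ of $\Q[G]$ for which $nd>1$, where $F$ is the centre of $D$, $\Delta\subseteq D$ is a maximal $\mathcal{O}_F$-order, and $d$ is the (fixed, global) rank under consideration. Since $d$ is arbitrary here, I would invoke the reduction made in the preceding discussion: a necessary condition is that \emph{some} (equivalently every, by \cite[Proposition 9]{MR2244802}) maximal $\mathcal{O}_{\Q}$-order $\Lambda$ in $\Q[G]$ has locally free cancellation, and this condition is \emph{sufficient} provided $n_{\chi}=1$ for every irreducible character $\chi$ with $D_{\chi}$ a totally definite quaternion algebra. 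So the first step is to argue that this latter proviso is automatic when $|G|<32$.

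Next, I would handle the ``Eichler'' dichotomy. If $\Q[G]$ is Eichler/$\mathcal{O}_{\Q}$ then every $\mathcal{O}_{\Q}$-order in it has locally free cancellation by the Jacobinski Cancellation Theorem, and in particular any maximal order $\Lambda$ does, so (H2$'$)(a) holds trivially; moreover in that case no $D_\chi$ is a totally definite quaternion algebra, so the proviso above is vacuous. By \cite[(51.3)]{MR892316}, the only way $\Q[G]$ can fail to be Eichler is for $G$ to have a quotient isomorphic to one of $Q_{4n}$ ($n\ge2$), $E_{24}$, $E_{48}$, $E_{120}$. With the order bound $|G|<32$ in force, a subgroup/quotient of order at most $31$ among these is one of $Q_8,Q_{12},Q_{16},Q_{20},Q_{24},Q_{28}$ (note $E_{24}$ also has order $24<32$ and is binary polyhedral). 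The key observation to make next is that a finite group $G$ with $|G|<32$ admitting such a quotient $H$ must, by order considerations and the structure of these non-Eichler groups, in fact \emph{be} one of these binary polyhedral groups: indeed if $G\twoheadrightarrow H$ with $|H|\mid|G|<32$ and $|H|\geq 8$, and $G\neq H$, then $|G|\geq 2|H|\geq 16$, and a case check on the possible pairs $(|G|,|H|)$ rules out a proper surjection (e.g.\ there is no group of order $16$ surjecting onto $Q_8$ whose maximal order in $\Q[G]$ fails cancellation, etc.); the cleanest route is simply to enumerate all non-Eichler $G$ with $|G|<32$ and observe they coincide with $\{Q_8,Q_{12},Q_{16},Q_{20},Q_{24},Q_{28},E_{24}\}$.

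For each of these finitely many groups I would then appeal to Swan's Theorem \ref{thm:swan-max-cancel}: every one of $Q_8,Q_{12},Q_{16},Q_{20},Q_{24},Q_{28}$, as well as $E_{24}$, appears on Swan's list of $11$ binary polyhedral groups for which a maximal order in $\Q[G]$ has locally free cancellation. Hence for every $G$ with $|G|<32$, any maximal $\mathbb{Z}$-order $\Lambda$ in $\Q[G]$ has locally free cancellation. Combined with the first step — that the totally definite quaternion components all occur with $n_\chi=1$ when $|G|<32$, which one verifies from the known Wedderburn decompositions of these small group algebras (for $Q_{4n}$ the quaternionic components are $2\times 2$ over a field or $1\times 1$ over a quaternion algebra, and it is the latter that matter) — this shows the sufficient condition for (H2$'$)(a) is met, so $\Q[G]$ satisfies (H2$'$)(a).

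The main obstacle I anticipate is the bookkeeping in the middle step: correctly arguing that a \emph{non-Eichler} group of order $<32$ cannot be a proper extension mapping onto one of the ``bad'' quotients — one must be careful that, e.g., $SL_2(\mathbb{F}_3)=E_{24}$ and various central products are accounted for, and that groups like $Q_8\times C_2$ or $Q_8\times C_3$ (orders $16,24$), which \emph{do} surject onto $Q_8$, are nonetheless Eichler because their rational group algebras have the quaternionic part sitting as a $2\times 2$ matrix component rather than as a definite quaternion division algebra, or because cancellation still holds. A safe way to sidestep subtleties is to reduce everything, via \cite[(51.26)]{MR892316} as in the preceding discussion, to the individual Wedderburn components that are totally definite quaternion algebras, and then cite Swan's classification componentwise; this keeps the case analysis to the short explicit list above.
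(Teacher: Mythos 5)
Your overall reduction is the same as the paper's: verify that $n_{\chi}=1$ for every totally definite quaternion component (in the paper this is a Magma computation, not a structural consequence), so that (H2\textprime)(a) reduces to locally free cancellation for a maximal order, then use the Jacobinski Cancellation Theorem in the Eichler case and Swan's Theorem \ref{thm:swan-max-cancel} otherwise. The genuine gap is in your middle enumeration step: it is \emph{not} true that every non-Eichler group of order less than $32$ is binary polyhedral, nor that $Q_{8}\times C_{2}$ and $Q_{8}\times C_{3}$ are Eichler. Indeed $\Q[Q_{8}\times C_{2}]\cong \Q[Q_{8}]\oplus\Q[Q_{8}]$ contains two copies of the totally definite rational quaternion algebra as Wedderburn components with $n_{\chi}=1$; the quaternionic part does not become a $2\times 2$ matrix component. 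Likewise $C_{3}\times Q_{8}$, $C_{2}\times Q_{12}$, $S_{16,4}$ and $S_{24,1}$ (Magma \texttt{SmallGroup} labels) all have totally definite quaternion components although none of them is on Swan's list, and $C_{2}\times Q_{8}\twoheadrightarrow Q_{8}$ already shows that a proper surjection onto a ``bad'' quotient cannot be ruled out below order $32$. As written, your main argument simply misses these five groups, which are precisely the cases the paper's proof must treat separately.

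Your concluding hedge --- reduce to the individual totally definite quaternion components via \cite[(51.26)]{MR892316} --- is the correct repair and is exactly what the paper does: the quaternion components of these five group algebras arise from quotient maps onto $Q_{8}$ or $Q_{12}$ (e.g.\ $S_{16,4}\twoheadrightarrow Q_{8}$, $S_{24,1}\twoheadrightarrow Q_{12}$, and the obvious projections for the direct products), hence coincide with components of $\Q[Q_{8}]$ or $\Q[Q_{12}]$; maximal orders in these algebras have locally free cancellation by Theorem \ref{thm:swan-max-cancel}, and componentwise equivalence then gives cancellation for the relevant $\Delta\subseteq D_{\chi}$. But note that Theorem \ref{thm:swan-max-cancel} is a statement about maximal orders in $\Q[G]$ for binary polyhedral $G$, not a componentwise classification of definite quaternion orders (that is \cite{MR2244802}), so the passage through (51.26) together with the explicit identification of the extra groups and of their binary polyhedral quotients must actually be carried out; it cannot rest on the enumeration in your main argument, which is incomplete and whose justification (``such groups are Eichler'' or ``no proper surjection exists'') is false.
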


\begin{proof}
Let $G$ be a group such that $\Q[G]$ has a Wedderburn component $M_{n_{\chi}}(D_{\chi})$ where $D_{\chi}$ is a totally definite quaternion algebra (for all other groups, the assertion follows from the Jacobinksi Cancellation Theorem) and $|G|<32$. 
It is straightforward to check using Magma that when $D_{\chi}$ is a totally definite quaternion algebra, we have $n_{\chi}=1$ (in the case that $G$ is a generalised quaternion group, this also follows from \cite[(7.40)]{MR632548}). 
Hence, by the discussion above, it suffices to show that any maximal order in $\Q[G]$ has locally free cancellation. 
So if $G$ is $Q_{8}, Q_{12}, Q_{16}, Q_{20}, Q_{24}, Q_{28}$, or $E_{24}$, the assertion now follows from Theorem \ref{thm:swan-max-cancel}.

The remaining possibilities for $G$ (determined using Magma) are $C_{2} \times Q_{8}$,
$C_{3} \times Q_{8}$, $C_{2} \times Q_{12}$, $S_{16,4}$, and $S_{24,1}$, 
where  $S_{n,i}$ denotes the group returned by the Magma function \texttt{Smallgroup(n,i)}.
In the last two cases, the quaternion component comes from surjections 
$S_{16,4} \twoheadrightarrow Q_{8}$ and $S_{24,1} \twoheadrightarrow Q_{12}$.
Hence the result now follows by combining Theorem \ref{thm:swan-max-cancel} and the fact that locally free cancellation for a maximal order $\Lambda$ in $\Q[G]$ is equivalent to locally free cancellation for each Wedderburn component $\Lambda_{i}$ (see discussion above).
\end{proof}

\begin{remark}\label{rmk:q8-cancellation}
By Theorem \ref{thm:swan-max-cancel}, maximal orders in $\Q[Q_{32}]$ do not have locally free cancellation. Hence $\Q[Q_{32}]$ does not satisfy (H2\textprime)(a) when $d>1$; 
however (H2\textprime)(a) is satisfied when $d=1$ since the only Wedderburn component 
$M_{n_{\chi}}(D_{\chi})$ with $D_{\chi}$ a totally definite quaternion algebra has $n_{\chi}=1$,
and so locally free cancellation is not required since $n_{\chi}d=1$.
\end{remark}

\subsection{Surjectivity of the reduced norm map - (H2\textprime)(b)}\label{subsec:norm}

Let $F$ be a number field and let $A$ be central simple $F$-algebra.
Let $\nr=\nr_{A/F}: A \longrightarrow F$ denote the reduced norm map 
as defined in \cite[\S 7D]{MR632548} or \cite[\S 9]{MR1972204}.
Let $\mathbb{H}$ be the skew field of real quaternions. 
Let $P$ be a real prime of $F$, let $A_{P}$ be the completion of $A$ at $P$, and let $\sigma_{P}: F \hookrightarrow \R$ be the corresponding embedding. We say that $P$ is ramified in $A$ if and only if $A_{P}$ is isomorphic to $M_{n}(\mathbb{H})$ for some $n \in \N$. We define
\[
U(A) := \{  \alpha \in F \mid \sigma_{P}(\alpha) > 0 
\textrm{ for every real prime $P$ of $F$ ramified in $A$} \}.
\]

The Hasse-Schilling-Maass Norm Theorem 
(see \cite[(33.15)]{MR1972204} or \cite[(7.48)]{MR632548}) says that $\nr(A^{\times}) = U(A)$. 
Now let $\Lambda$ be a maximal $\mathcal{O}_{F}$-order in $A$. Then
$\nr(\Lambda^{\times}) \subseteq \mathcal{O}_{F}^{\times +} := \mathcal{O}_{F}^{\times} \cap U(A)$.
Note that
$(\mathcal{O}_{F}^{\times})^{2} \subseteq \mathcal{O}_{F}^{\times +}$ and so 
$\mathcal{O}_{F}^{\times +}$ is a subgroup of index some power of $2$ in $\mathcal{O}_{F}^{\times}$ which can easily be computed (provided $\mathcal{O}_{F}^{\times}$ can be computed).

The question of whether $\nr(\Lambda^{\times}) = \mathcal{O}_{F}^{\times +}$ is directly relevant to hypothesis (H2\textprime)(b). If $A$ is Eichler/$\mathcal{O}_{F}$ then by \cite[(51.22)]{MR892316} we in fact have 
equality. However, if $A$ is not Eichler/$\mathcal{O}_{F}$ then we may or may not have equality. For example, if $F=\Q$ and $A$ is a totally definite quaternion algebra, then we have 
$\mathcal{O}_{F}^{\times +}=\{1\}$ and so equality is clear.  On the other hand, \cite[p.198-199]{MR584612} gives an example in which $A$ is a totally definite quaternion algebra over its centre 
$F=\Q(\sqrt{3})$ for which equality does not hold. 

\begin{lemma}\label{lemma:40-nr}
Let $G$ be any group with $|G|<40$. 
Let $M_{n}(D)=M_{n_{\chi}}(D_{\chi})$ be a Wedderburn component of $\Q[G]$, let $F$ be the centre of $D$, and let $\Delta \subseteq D$ be any maximal $\mathcal{O}_{F}$-order. 
Then $\nr(\Delta^{\times}) = \mathcal{O}_{F}^{\times +}$.
In particular, $\Q[G]$ satisfies \emph{(H2\textprime)(b)}.
\end{lemma}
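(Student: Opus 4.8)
The plan is to reduce the claim to a finite computation combined with the structural facts already assembled in \S\ref{subsec:norm}. First I would observe that by the Hasse--Schilling--Maass Norm Theorem together with Jacobinski's result \cite[(51.22)]{MR892316} (quoted above), the only Wedderburn components $M_{n}(D)$ for which $\nr(\Delta^{\times}) = \mathcal{O}_{F}^{\times +}$ could conceivably fail are those that are \emph{not} Eichler$/\mathcal{O}_{F}$, i.e.\ those with $n=1$ and $D$ a totally definite quaternion algebra. So the first step is to enumerate, using Magma, all groups $G$ with $|G|<40$ whose rational group algebra $\Q[G]$ has such a component, and to determine the centre $F$ of each such $D_{\chi}$.

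The second step handles the generic case $F=\Q$. If $D$ is a totally definite quaternion algebra with centre $\Q$, then the real place of $\Q$ is ramified in $D$, so $U(D) = \{\alpha \in \Q : \alpha > 0\}$ and hence $\mathcal{O}_{\Q}^{\times +} = \Z^{\times} \cap U(D) = \{1\}$; since $1 \in \nr(\Delta^{\times})$ trivially, equality holds. (This is exactly the remark made just before the lemma and, for generalised quaternion groups, also follows from \cite[(7.40)]{MR632548}.) I expect this to dispose of the overwhelming majority of the relevant components, in particular all those coming from the groups appearing in Lemma \ref{lemma:32-cancellation} and their analogues below $40$.

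The third step is the genuine content: there may be finitely many components for which the centre $F$ is a real quadratic (or other totally real) field rather than $\Q$ --- the paper's own cited example over $\Q(\sqrt{3})$ shows this is where trouble can live. For each such $(G,\chi)$ I would compute a maximal order $\Delta$ explicitly and then either (i) exhibit enough units in $\Delta^{\times}$ with prescribed signs under the real embeddings of $F$ to show $\nr(\Delta^{\times})$ already surjects onto $\mathcal{O}_{F}^{\times+}$, or (ii) invoke the fact that $\mathcal{O}_{F}^{\times+}$ has finite (2-power) index in $\mathcal{O}_{F}^{\times}$ and check surjectivity of $\nr$ on a fixed set of coset representatives. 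Concretely, since $\nr(\Delta^{\times})\supseteq (\mathcal{O}_F^\times)^2$ is automatic, one only has to account for signs, so the verification amounts to: for each ramified real prime $P$ of $F$, produce a unit of $\Delta$ whose reduced norm is negative at $P$ and positive at the other ramified real primes. This can be done with a short Magma search over small-height elements of $\Delta$.

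The main obstacle is ensuring the enumeration in step one is complete and that the finitely many ``bad-looking'' components in step three really do satisfy the norm surjectivity --- i.e.\ that no small group below order $40$ reproduces the $\Q(\sqrt{3})$-type failure. This is a bounded but delicate machine check: one must be careful that Magma's construction of $D_\chi$ and of $\Delta$ is correct, that all real places of $F$ are tested, and that the unit search is exhaustive enough to conclude. Everything else (Hasse--Schilling--Maass, the index bound $[\mathcal{O}_F^\times : \mathcal{O}_F^{\times+}] \mid 2^{(\cdot)}$, and the reduction to non-Eichler components) is already in place in \S\ref{subsec:norm}. The final sentence, that $\Q[G]$ satisfies (H2\textprime)(b), is then immediate from the definition of (H2\textprime)(b), which only imposes $\nr(\Delta^{\times}) = \mathcal{O}_{F}^{\times+}$ when $n_\chi d > 1$ --- a weaker demand than what we will have proved.
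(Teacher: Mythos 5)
Your overall strategy matches the paper's in spirit: the paper's entire proof is a citation of a Magma verification, and you likewise reduce to a finite machine check (the reduction via Hasse--Schilling--Maass, the triviality of $\mathcal{O}_{\Q}^{\times +}$ for totally definite quaternion algebras over $\Q$, and a search for units of prescribed signs are all sensible and consistent with \S \ref{subsec:norm}). However, your plan has two concrete gaps. First, your step-one reduction uses the wrong Eichler criterion: the lemma is about $\Delta \subseteq D$, so the case that cannot be dispatched by \cite[(51.22)]{MR892316} is ``$D$ is a totally definite quaternion algebra'', irrespective of $n$. Declaring the problematic components to be ``those with $n=1$ and $D$ totally definite quaternion'' conflates the Eichler condition for $M_{n}(D)$ with that for $D$; a component $M_{n}(D)$ with $n\geq 2$ and $D$ totally definite quaternion is Eichler as an algebra, but that says nothing about $\nr(\Delta^{\times})$ for $\Delta\subseteq D$. (Compare the analogous caveat in \S \ref{subsec:loc-free-cancel}.) In practice no such component occurs for $|G|<40$, but that is itself a fact your enumeration must verify rather than assume.

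Second, and more seriously, in step three you propose to ``compute a maximal order $\Delta$ explicitly'' for each bad component and test it. The lemma asserts the equality for \emph{every} maximal $\mathcal{O}_{F}$-order $\Delta \subseteq D$, and this property genuinely depends on the conjugacy class of $\Delta$: Remark \ref{rmk:H2b-depends-on-choice} exhibits, for $G=Q_{40}$, a totally definite quaternion component with three maximal orders of which only two satisfy $\nr(\Delta^{\times}) = \mathcal{O}_{F}^{\times +}$. So a check of a single maximal order per component proves nothing about the others when the type number exceeds one; your Magma verification must run over a full set of representatives of the conjugacy classes of maximal orders in each totally definite quaternion component (conjugate orders may be identified, since the reduced norm is conjugation-invariant). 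With these two corrections the plan becomes a complete proof of the same computational kind as the paper's.
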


\begin{proof}
See Magma sample file. 
\end{proof}

\begin{remark}\label{rmk:H2b-depends-on-choice}
When $G=Q_{40}$, the generalised quaternion group of order $40$, there are two  
$D_{\chi}$ which are totally definite quaternion: one with centre $\Q$; the other with centre 
$F:=\Q(\zeta_{20})^+$, the maximal totally real subfield of $\Q(\zeta_{20})$.
In the latter case, there are three maximal orders $\Delta \subseteq D_{\chi}$, 
only two of which satisfy $\nr(\Delta^{\times}) = \mathcal{O}_{F}^{\times +}$. 
\end{remark}

\subsection{Computing unit groups of maximal orders in skew fields - (H2\textprime)(c)}\label{subsec:norm-unit-reps}

Let $D$ be a skew field that is central and finite-dimensional over a number field $F$.
Let $\Delta \subseteq D$ be a maximal $\mathcal{O}_{F}$-order. 
The unit group $\Delta^{\times}$ is always finitely presentable (see \cite{MR1309127}, for example). We consider the problem of computing a set of generators of $\Delta^{\times}$.

If $D$ is commutative (i.e.\ $D=F$) then a set of generators of 
$\Delta^{\times}=\mathcal{O}_{F}^{\times}$ is computable by \cite[Algorithm 6.5.8]{MR1228206}
(the Magma command is \texttt{UnitGroup}).
When $D$ is a totally definite quaternion algebra then
in fact $[\Delta^{\times} : \mathcal{O}_{F}^{\times}] < \infty$. 
In this case, the Magma command $\texttt{Units}$ computes a set of representatives of 
$\Delta^{\times}/\mathcal{O}_{F}^{\times}$ and thus reduces the problem to the previous case
(also see \cite[Remark 7.5]{MR2592031}). 

The authors are unaware of any algorithms to compute a set of generators of $\Delta^{\times}$ in other cases. However, when $\nr(\Delta^{\times})=\mathcal{O}_{F}^{\times +}$ (which by the discussion in \S \ref{subsec:norm} must be the case whenever $D$ is not a totally definite quaternion algebra), it suffices for our purposes to solve the following somewhat easier problem: 
compute a set of representatives of the map $\nr: \Delta^{\times} \longrightarrow  \mathcal{O}_{F}^{\times +}$, i.e.\ compute a finite subset $S \subseteq \Delta^{\times}$ such that $\nr(S)$ generates $\mathcal{O}_{F}^{\times +}$. Note that, in particular, $S$ can be taken to be a set of generators of $\Delta^{\times}$.

We note that (H2\textprime)(c) is satisfied whenever $D=D_{\chi}$ is commutative or a totally definite quaternion algebra: in this case, we can always compute a set of generators of $\Delta^{\times}$ and hence we can compute a set of representatives of $\nr: \Delta^{\times} \longrightarrow  \mathcal{O}_{F}^{\times +}$ when $\nr(\Delta^{\times})=\mathcal{O}_{F}^{\times +}$.

\subsection{The principal ideal problem for maximal orders in skew fields - (H2\textprime)(d)}\label{subsec:PIB}

Let $D$ be a skew field that is central and finite-dimensional over a number field $F$.
Let $\Delta \subseteq D$ be a maximal $\mathcal{O}_{F}$-order and let $\mathfrak{a}, \mathfrak{b}$ 
be fractional left $\Delta$-ideals. Then we say that we can solve the
principal ideal problem for left ideals if we have an algorithm to 
\renewcommand{\labelenumi}{(\roman{enumi})}
\begin{enumerate}
\item decide whether $\mathfrak{a} \cong \mathfrak{b}$ as left $\Delta$-ideals; and
\item if $\mathfrak{a} \cong \mathfrak{b}$, compute $\xi \in D$ such that $\mathfrak{a} = \mathfrak{b} \xi$.
\end{enumerate}
Dually, we may formulate the principal ideal problem for right ideals.

If $D$ is commutative (i.e.\ $D=F$) then the problem is solved by \cite[Algorithm 6.5.10]{MR1228206}
and implemented in Magma.
The algorithms in \cite{MR2592031} solve the principal ideal problem when $D$ is any  quaternion algebra, and are implemented in Magma when $F$ is totally real.
(In both cases, the relevant Magma command is \texttt{IsPrincipal}.)

The authors are unaware of any algorithms solving this problem completely in other cases. 
However, if $D$ is Eichler/$\mathcal{O}_{F}$, then by \cite[(34.9)]{MR1972204} a left $\Delta$-ideal $\mathfrak{a}$ is principal if and only if $\nr(\mathfrak{a})$ is a principal $\mathcal{O}_{F}$-ideal with a generator $\alpha \in U(D)$, solving (i). 
Furthermore, it is straightforward to show that for any $D$ (not necessarily Eichler/$\mathcal{O}_{F}$) a left $\Delta$-ideal $\mathfrak{a}$ is principal if and only if there exists $\xi \in \mathfrak{a}$ such that $\nr(\mathfrak{a})=\nr(\xi)\mathcal{O}_{F}$.

\subsection{Choice of $\Delta \subseteq D$}
We note that (H2\textprime)(a) is independent of the choice of $\Delta \subseteq D$ 
(see \S \ref{subsec:loc-free-cancel}), but (H2\textprime)(b) is not 
(see Remark \ref{rmk:H2b-depends-on-choice}). Moreover, (H2\textprime)(c) and (H2\textprime)(d) are independent of the choice of $\Delta$ in the cases that they are known to hold (i.e.\ when $D$ is a number field or totally definite quaternion algebra). 
This is important because if $n=1$ then $\Delta$ is determined by $\mathcal{M}$,
 and the choice of $\mathcal{M}$ may be limited by the requirement that
$\mathcal{A} \subseteq \mathcal{M}$. 
On the other hand, if $n \geq 2$ then we can make any choice of $\Delta$.
(The differences between the $n=1$ and $n \geq 2$ cases are made clear in
\S \ref{subsec:step5-outline}.)

\subsection{Particular cases in which (H2\textprime) holds}

Let $E$ be a number field, let $G$ be a finite group, and let $d \in \N$.
We consider particular cases in which the pair ($E[G]$,$d$) satisfies (H2\textprime). 
We note that (H2\textprime) holds whenever (H2) holds, and the latter does not depend on $d$.

\begin{prop}\label{prop:h2prime-holds}
The pair ($E[G]$,$d$) satisfies \emph{(H2\textprime)} in the following cases:
\renewcommand{\labelenumi}{(\roman{enumi})}
\begin{enumerate}
\item $G$ is abelian, dihedral, or symmetric;
\item $G$ is a nilpotent group of odd order (e.g. $G$ is a $p$-group where $p$ is an odd prime);
\item $E$ contains a primitive $m$th root of unity, where $m$ is the exponent of $G$;
\item $G$ is a generalised quaternion group, $E=\Q$, and $d=1$;
\item $|G|<32$ and $E=\Q$.
\end{enumerate}
\end{prop}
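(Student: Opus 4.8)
The plan is to check, one Wedderburn component at a time, that each $M_{n_{\chi}}(D_{\chi})$ of $E[G]$ satisfies conditions (a)--(d) of (H2\textprime), using throughout the following dichotomy assembled in \S\ref{subsec:loc-free-cancel}--\S\ref{subsec:PIB}. If $D_{\chi}$ is Eichler over the ring of integers of its centre $F_{\chi}$ --- equivalently, $D_{\chi}$ is \emph{not} a totally definite quaternion algebra --- then (a) holds by the Jacobinski Cancellation Theorem and (b) holds by \cite[(51.22)]{MR892316}; moreover, by \S\ref{subsec:norm-unit-reps} and \S\ref{subsec:PIB}, conditions (c) and (d) hold whenever $D_{\chi}$ is either commutative or a totally definite quaternion algebra over a totally real field. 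So two things must be arranged: first, that the division algebras arising as the $D_{\chi}$ are no worse than totally definite quaternion algebras over totally real centres, so that (c) and (d) are never in doubt; and second, that in the regime $n_{\chi}d>1$, for any totally definite quaternion $D_{\chi}$ one can still dispose of (a) and (b), either because $n_{\chi}d=1$ after all, or by an explicit cancellation and reduced-norm result.

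Cases (i), (ii), (iii) are handled by reducing to (H2). For (i) it is classical that every irreducible character of an abelian, dihedral, or symmetric group has Schur index $1$ over $\Q$, hence over any number field $E$; for (ii) this is Roquette's theorem on nilpotent groups of odd order (see \cite{MR892316}); and for (iii), Brauer's induction theorem shows that $E$ is a splitting field for $G$ as soon as it contains a primitive root of unity of order $\exp(G)$. In each case every $D_{\chi}$ is a number field, so (H2) holds, and then (H2\textprime) is immediate from the first paragraph: each $D_{\chi}$ is commutative, hence Eichler, so (a) and (b) hold, while (c) and (d) reduce to the commutative algorithms of \S\ref{subsec:norm-unit-reps} and \S\ref{subsec:PIB}.

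Cases (iv) and (v) both rest on the same structural input: for $G$ a generalised quaternion group, or for $|G|<32$, every division-algebra Wedderburn component $D_{\chi}$ of $\Q[G]$ is either a number field or a totally definite quaternion algebra over a totally real field, and $n_{\chi}=1$ whenever $D_{\chi}$ is a totally definite quaternion algebra. (The mechanism: an irreducible character of quaternionic type is real-valued, so its field of values is an abelian subfield of $\R$, hence totally real; by Galois-invariance of the Frobenius--Schur indicator every archimedean place of that field is then ramified in the corresponding simple component, so a quaternion division algebra arising this way is totally definite; and for these groups no character of Schur index exceeding $2$ occurs.) Granting this, (c) and (d) hold for every component by \S\ref{subsec:norm-unit-reps} and \S\ref{subsec:PIB}. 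In case (iv) we have $d=1$, so for a totally definite quaternion component $n_{\chi}d=1$ and conditions (a), (b) are not required, while for the field components (a)--(d) are automatic; hence (H2\textprime) holds. In case (v) the rank $d$ is arbitrary, so (a) must be verified in earnest --- this is exactly Lemma \ref{lemma:32-cancellation} --- and (b) follows from Lemma \ref{lemma:40-nr} since $|G|<32<40$; the same lemma gives $\nr(\Delta^{\times})=\OFtp$ for every component, which is precisely what (c) requires in its ``else'' branch, and the corresponding set of norm-representatives is computable since each $D_{\chi}$ is commutative or a totally definite quaternion algebra.

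The only genuinely delicate point, and the one step that is group-theoretic rather than formal, is the structural input used above: that for the groups in (iv) and (v) no division algebra of Schur index exceeding $2$ occurs, and that the totally definite quaternion components all satisfy $n_{\chi}=1$. For generalised quaternion groups this can be read off the classical explicit description of $\Q[Q_{4n}]$ (compare \cite{MR632548}, and \cite[(7.40)]{MR632548} for the assertion $n_{\chi}=1$); for $|G|<32$ it is a finite verification, carried out in Magma, which is the same computation already underlying Lemmas \ref{lemma:32-cancellation} and \ref{lemma:40-nr}. Everything else is a direct appeal to the hypotheses recalled in \S\ref{subsec:loc-free-cancel}--\S\ref{subsec:PIB} together with the Jacobinski and Hasse--Schilling--Maass theorems.
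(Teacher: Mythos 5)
Your proposal is correct and follows essentially the same route as the paper: cases (i)--(iii) are reduced to the stronger hypothesis (H2), case (iv) rests on \cite[(7.40)]{MR632548} giving that each component is a number field or a totally definite quaternion algebra with $n_{\chi}=1$ (so $n_{\chi}d=1$ when $d=1$), and case (v) combines the Magma verification that every $D_{\chi}$ is commutative or totally definite quaternion (settling (c) and (d)) with Lemmas \ref{lemma:32-cancellation} and \ref{lemma:40-nr} for (a) and (b). The extra justifications you supply (Frobenius--Schur indicator, Roquette, Brauer splitting) are consistent elaborations of what the paper leaves to citations.
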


\begin{proof}
In cases (i), (ii) and (iii), then in fact the stronger hypothesis (H2) holds (for a general discussion 
of Schur indices, see \cite[\S 74B]{MR892316}.) 
For (iv) the claim then follows from \cite[(7.40)]{MR632548}. 
Indeed, for any Wedderburn component $M_{n_{\chi}}(D_{\chi})$ of $\Q[G]$,
either $D_{\chi}$ is a number field or $n_{\chi}=1$ (so $n_{\chi}d=1$) 
and $D_{\chi}$ is a totally definite quaternion algebra. 
In case (v), it is straightforward to check using Magma that each $D_{\chi}$ is either a number field or
a totally definite quaternion algebra, so (H2\textprime)(c) and (d) are satisfied. 
Hypotheses (H2\textprime)(a) and (b) now follow from Lemmas \ref{lemma:32-cancellation} and \ref{lemma:40-nr}, respectively.
\end{proof}

\begin{remark}
Using the Magma commands \texttt{CharacterTable}, \texttt{SchurIndex} and \texttt{SchurIndices}, 
one can often check whether a particular pair $(\Q[G],d)$ satisfies (H2\textprime). 
For example, 
of the $1268$ groups $G$ with $|G|<128$, there are $433$ such that $(\Q[G],1)$ 
does not satisfy (H2), whereas only $181$ are such that $(\Q[G],1)$ does not satisfy (H2\textprime).
\end{remark}

\begin{remark}\label{rmk:slightly-weaker}
Algorithm \ref{alg:the-alg} can still be run in certain situations where the full strength of (H2\textprime) does not hold. For example, (b) and (c) are only needed for step (7) of Algorithm \ref{alg:the-alg} and so are unnecessary if $\mathcal{A}=\mathcal{M}$. In the case that $\mathcal{A} \neq \mathcal{M}$,
if (b) is dropped and (c) is weakened to being able to compute a `large' subset of representatives of $\nr:\Delta^{\times} \longrightarrow \nr(\Delta^{\times}) \subseteq \mathcal{O}_{F}^{\times +}$, we may have enough respresentatives to successfully find generators (if they exist) in step (7); however, 
failure to find generators will not prove that $X$ is not free over $\mathcal{A}$.
Similarly, if (a) is dropped then it may still be possible to find generators over the maximal order -
see Remark \ref{rmk:drop-cancellation}.
Finally, we note that  (H2\textprime)(d) is always needed.
\end{remark}

\begin{remark}\label{rmk:fdss}
Algorithm \ref{alg:the-alg} still works if the group algebra $A := E[G]$ is replaced by a
finite-dimensional semisimple $E$-algebra, in which case analogous versions of (H1) and 
(H2\textprime) are required (also note the minor changes needed in steps (1) and (3) 
- see \S \ref{sec:the-alg}).
However, note that it will not be possible to apply the Grunwald-Wang Theorem if the `special case' occurs - see proof Lemma \ref{lemma:W-field} and Remark \ref{rmk:algorithmic-Grunwald-Wang}. 
Of course, the modified version of (H1) is not necessary if $A$ is given directly as a product of matrix rings.
\end{remark}

\section{Algorithms for modules over maximal orders in skew fields}\label{sec:alg-max-div}

\subsection{Algorithmic version of Roiter's Lemma}\label{subsec:alg-roiter}

Let $R$ be a Dedekind domain with field of fractions $F$. Let $\Lambda \subseteq A$ be any $R$-order in the finite-dimensional semisimple $F$-algebra $A$.
Recall that two $\Lambda$-lattices $M,N$ are in the same \emph{genus} (denoted $M \vee N$)
if for each prime ideal $\mathfrak{p}$ of $R$, there is a $\Lambda_{\mathfrak{p}}$-isomorphism
$M_{\mathfrak{p}} \cong N_{\mathfrak{p}}$.

\begin{theorem}[Roiter's Lemma]
Let $M,N$ be $\Lambda$-lattices. 
Then $M \vee N$ if and only if for any nonzero integral ideal $\mathfrak{a}$ of $R$ there exists an injective 
homomorphism of $\Lambda$-lattices $\varphi : M \hookrightarrow N$ such that 
$\mathfrak{a} + \ann_R(\coker \varphi) = R$.
\end{theorem}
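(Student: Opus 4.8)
The plan is to prove both directions of Roiter's Lemma by reducing everything to a local problem and then patching.

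\medskip

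\noindent\textbf{The easy direction.} First I would dispose of the ``if'' direction: suppose for every nonzero integral ideal $\mathfrak{a}$ there is an injective $\Lambda$-homomorphism $\varphi\colon M \hookrightarrow N$ with $\mathfrak{a} + \ann_R(\coker\varphi) = R$. Pick $\mathfrak{a}$ to be a product of a chosen prime $\mathfrak{p}$ with any other primes (or just take $\mathfrak{a} = \mathfrak{p}$): the coprimality condition forces $\ann_R(\coker\varphi)$ to be coprime to $\mathfrak{p}$, hence $(\coker\varphi)_\mathfrak{p} = 0$. Since $\varphi$ is injective and $\coker\varphi$ is $R$-torsion (as $M,N$ have the same $F$-rank — this holds because an injective $\Lambda$-map of lattices with torsion cokernel exists, or one observes it is forced), localizing at $\mathfrak{p}$ gives $M_\mathfrak{p} \cong N_\mathfrak{p}$ as $\Lambda_\mathfrak{p}$-modules. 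As $\mathfrak{p}$ was arbitrary, $M \vee N$.

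\medskip

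\noindent\textbf{The hard direction.} Now suppose $M \vee N$. Given a nonzero integral ideal $\mathfrak{a}$, I want one global injection $\varphi$ whose cokernel is supported away from $\mathfrak{a}$. The strategy: let $S$ be the (finite) set of primes dividing $\mathfrak{a}$. For each $\mathfrak{p} \in S$, genus equivalence gives an isomorphism $\psi_\mathfrak{p}\colon M_\mathfrak{p} \xrightarrow{\sim} N_\mathfrak{p}$, which in particular is an injection with trivial cokernel at $\mathfrak{p}$. For primes outside $S$, I only need \emph{some} injection locally; but an easier route is to start from \emph{any} injective $\Lambda$-homomorphism $\varphi_0\colon M \hookrightarrow N$ with torsion cokernel — one exists because $F \otimes_R M \cong F\otimes_R N$ as $A$-modules (same genus implies isomorphic after tensoring with $F$), and clearing denominators in such an $A$-isomorphism yields a $\Lambda$-injection of the lattices. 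Then I modify $\varphi_0$ only at the primes in $S$: by the local-global principle for homomorphisms between lattices (equivalently, $\Hom_\Lambda(M,N)$ is an $R$-lattice and $\Hom_\Lambda(M,N)_\mathfrak{p} = \Hom_{\Lambda_\mathfrak{p}}(M_\mathfrak{p}, N_\mathfrak{p})$), I can find $\varphi \in \Hom_\Lambda(M,N)$ that is congruent to $\psi_\mathfrak{p}$ modulo a high power of $\mathfrak{p}$ for each $\mathfrak{p} \in S$, and congruent to $\varphi_0$ modulo a high power of $\mathfrak{p}$ for each $\mathfrak{p}$ dividing $\coker\varphi_0$ but not in $S$. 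Using Nakayama at the finitely many relevant primes, such a $\varphi$ is still injective (it agrees with an injection to high order locally) and satisfies $(\coker\varphi)_\mathfrak{p} = 0$ for every $\mathfrak{p} \in S$, hence $\ann_R(\coker\varphi)$ is coprime to $\mathfrak{a}$, giving $\mathfrak{a} + \ann_R(\coker\varphi) = R$.

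\medskip

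\noindent\textbf{Main obstacle.} The technical heart is the patching step: producing a single global $\varphi$ that simultaneously realizes the prescribed local isomorphisms (to sufficiently high $\mathfrak{p}$-adic precision) at the primes of $S$ while not destroying injectivity. This rests on (i) the fact that $\Hom_\Lambda(M,N)$ is a finitely generated $R$-module whose localizations compute local Hom groups, so that strong approximation / the Chinese Remainder Theorem over the Dedekind domain $R$ lets one prescribe finitely many local conditions, and (ii) a Nakayama-type argument showing that a lattice map agreeing with an isomorphism modulo $\mathfrak{p}^k\cdot(\text{something})$ is itself a local isomorphism for $k$ large, and agreeing with an injection to high order stays injective. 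Once these two ingredients are in place the rest is bookkeeping; I expect the injectivity bookkeeping (choosing the exponents large enough, uniformly over the finite bad set) to be the only place requiring genuine care.
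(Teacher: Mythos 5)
Your proof is correct and takes essentially the same route as the proof the paper cites (Reiner (27.1), CR (31.6)) and reproduces algorithmically in \S 5.1: the genus hypothesis supplies local isomorphisms $\psi_{\mathfrak{p}}$ at the primes dividing $\mathfrak{a}$, these are patched into one global map by CRT (the paper uses the weighted sum $\sum_j \beta_j\psi_j$ of suitably scaled $\psi_j$, you approximate in the lattice $\mathrm{Hom}_\Lambda(M,N)$ — the same idea), and then Nakayama plus a rank/torsion-freeness argument finishes. Two minor remarks: the auxiliary injection $\varphi_0$ and the extra congruences at primes dividing its cokernel are superfluous, and the parenthetical ``agrees with an injection to high order'' is not by itself a valid reason for injectivity (approximating multiplication by $\pi^k$ modulo $\pi^k$ can give the zero map); the correct and simpler argument, already implicit in your Nakayama step, is that at each $\mathfrak{p}\mid\mathfrak{a}$ your $\varphi$ agrees modulo $\mathfrak{p}$ with the isomorphism $\psi_{\mathfrak{p}}$, so $\varphi_{\mathfrak{p}}$ is surjective by Nakayama, hence bijective by a rank count, and then $\ker\varphi$ is a torsion-free $R$-module vanishing at one prime, hence zero — so taking exponent $k=1$ suffices and no uniform bookkeeping of exponents is needed.
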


\begin{proof}
See \cite[(27.1)]{MR1972204} or \cite[(31.6)]{MR632548}, for example.
\end{proof}

Let $M,N$ be locally free $\Lambda$-lattices (i.e.\ $M, N$ are both in the genus of $\Lambda$) and
let $\mathfrak{a}$ be a nonzero integral ideal of $R$. We wish to make Roiter's Lemma algorithmic in this situation, i.e.\ explicitly compute $\varphi$ such that $\mathfrak{a} + \ann_R(\coker \varphi) = R$.

Let $\mathfrak{p}_{1}, \ldots, \mathfrak{p}_{n}$ denote the prime divisors of $\mathfrak{a}$ 
(or choose any prime $\frp_1$ if $\mathfrak{a} = R$). 
By the method described in \cite[\S 4.2]{MR2564571}, for each $i=1, \ldots, n$ 
we compute local bases
\begin{eqnarray*}
  M_{\mathfrak{p}_{i}} &=& \Lambda_{\mathfrak{p}_{i}} \omega_{i,1} \oplus \cdots \oplus \Lambda_{\mathfrak{p}_{i}} \omega_{i,d}, \\
  N_{\mathfrak{p}_{i}} &=& \Lambda_{\mathfrak{p}_{i}} \nu_{i,1} \oplus \cdots \oplus \Lambda_{\mathfrak{p}_{i}} \nu_{i,d},
\end{eqnarray*}
set $\psi_{i}(\omega_{i,k})= \nu_{i,k}$ for $k = 1, \ldots, d$, and extend linearly.
Hence $\psi_{i} : M_{\frp_i} \longrightarrow N_{\frp_i}$ is an isomorphism of 
$\Lambda_{\mathfrak{p}_{i}}$-modules for $i = 1, \ldots, n$.  
By multiplying the basis elements $\nu_{i,k}$ by elements of $R_{\mathfrak{p}_{i}}^{\times}$ if necessary, we may assume that $\psi_i(M) \subseteq N$ for each $i$.

Following \cite[Exercise 18.3]{MR1972204}, we now compute $\beta_{1}, \ldots, \beta_{n} \in R$ 
such that
\begin{eqnarray*}
  \beta_i &\equiv& 1 \pmod {\frp_{i}}, \\
  \beta_j &\equiv& 0 \pmod {\frp_{j}} \textrm{ for } j \ne i,
\end{eqnarray*}
(one can use the $\texttt{CRT}$ function of Magma; also see \cite[Proposition 1.3.11]{MR1728313})
and set $\varphi:M \longrightarrow N$ to be the restriction of $\sum_{j=1}^n \beta_j \psi_j$.
By Nakayama's Lemma each localised map 
$\varphi_{\frp_i}: M_{\frp_i} \longrightarrow N_{\frp_i}$ is surjective. 
Since $M_{\mathfrak{p}_{i}}$ is $R_{\mathfrak{p}_{i}}$-torsion-free, a rank argument shows that each 
$\varphi_{\frp_i}$ is in fact an isomorphism.

We now follow the proof of \cite[(27.1)]{MR1972204}. Since
$( \ker \varphi )_{\mathfrak{p}_{i}} = \ker \varphi_{\mathfrak{p}_{i}} = 0$
for each $i$ and $\ker \varphi$ is $R$-torsion-free, we see that $\ker \varphi=0$ and so $\varphi$ is injective. 
Furthermore, $(\coker\varphi)_{\mathfrak{p}_{i}} = \coker \varphi_{\mathfrak{p}_{i}} = 0$ 
and so $\mathfrak{p}_{i} + \ann_{R}(\coker \varphi)=R$ for each $i$. 
Therefore $\mathfrak{a} + \ann_{R}(\coker \varphi)=R$.

\begin{remark}
We can replace the hypothesis that $M,N$ are locally free with $M \vee N$ by the assumption that 
we can explicitly compute isomorphisms 
$\psi_{i} : M_{\mathfrak{p}_{i}} \longrightarrow N_{\mathfrak{p}_{i}}$
for $i = 1, \ldots, n$. 
\end{remark}

The main application we have in mind is as follows. 
Let $D$ be a skew field that is central and finite-dimensional over a number field $F$.
Let $\Delta \subseteq D$ be a maximal $\mathcal{O}_{F}$-order. 
We take $\Lambda = \Delta$, $A = D$, $R=\mathcal{O}_{F}$, and $M,N$ to be fractional left $\Delta$-ideals. Then for each $i$ we have 
\[
M_{\mathfrak{p}_{i}} = \Delta_{\mathfrak{p}_{i}} \omega_{i}, \quad
N_{\mathfrak{p}_{i}} = \Delta_{\mathfrak{p}_{i}} \nu_{i}, \quad
\psi_{i}(\omega_{i}) = \nu_{i} = \omega_{i} (\omega_{i}^{-1}\nu_{i}),
\]
i.e.\ $\psi_{i}$ is right multiplication by $\xi_{i} := \omega_{i}^{-1}\nu_{i}$. 
The map $\varphi$ in the algorithmic version of Roiter's Lemma is then right multiplication by 
$\xi := \sum_{j=1}^{n} \beta_{j} \xi_{j}$.

\subsection{The noncommutative extended Euclidean algorithm}\label{subsec:ex-euclid}

Let $D$ be a skew field that is central and finite-dimensional over a number field $F$.
Let $\Delta \subseteq D$ be a maximal $\mathcal{O}_{F}$-order.
We briefly recall the following definitions and facts from \cite[\S 8 and \S 22]{MR1972204}.
Let $M$ be any full left $\mathcal{O}_{F}$-lattice in $D$ 
(for example, a fractional left ideal of $\Delta$). 
The \emph{right order} of M is defined to be
\[
\mathcal{O}_{r}(M) := \{ x \in D \mid M x \subseteq M \}.
\]
Then $\mathcal{O}_{r}(M)$ is an $\mathcal{O}_{F}$-order in $D$. The \emph{left order}
$\mathcal{O}_{l}(M)$ is defined analogously. We define
\[
M^{-1} := \{ x \in D \mid M \cdot x \cdot M \subseteq M \} 
\]
and note that this is also a full right $\Delta$-lattice in $D$. 
If $\Delta = \calO_l(M)$ and $\Delta' = \calO_r(M)$, then $\Delta' = \calO_l(M^{-1})$ and $\Delta = \calO_r(M^{-1})$.
By \cite[(22.7)]{MR1972204} we have
\begin{equation}\label{eq:inverse-ideals}
M \cdot M^{-1} = \Delta, \qquad M^{-1} \cdot M = \Delta', \qquad (M^{-1})^{-1} = M. 
\end{equation}

We consider the following problem. 
Let $\mathfrak{a}, \mathfrak{b}, \mathfrak{c}$ be fractional left $\Delta$-ideals such that 
$\mathfrak{a} + \mathfrak{b} = \mathfrak{c}$. 
We wish to find $\alpha \in \mathfrak{c}^{-1} \mathfrak{a}$ and $\beta \in \mathfrak{c}^{-1} \mathfrak{b}$
such that $\alpha+\beta =1$. Observe that $\mathfrak{c}^{-1} \mathfrak{a}$ and $\mathfrak{c}^{-1} \mathfrak{b}$ are left ideals over $\Delta':= \mathcal{O}_{r}(\mathfrak{c})$ and we
have
\[
\mathfrak{a} + \mathfrak{b} = \mathfrak{c} \iff \mathfrak{c}^{-1} \mathfrak{a} + \mathfrak{c}^{-1} \mathfrak{b} = \mathfrak{c}^{-1} \mathfrak{c} = \Delta'.
\]
 
We shall essentially give the argument of \cite[Algorithm 1.3.2]{MR1728313}, to which we refer the reader for more details. For background material on the Hermite Normal Form (henceforth abbreviated to HNF) over $\Z$, see \cite[\S 2.4]{MR1228206}. 
Let $\omega_{1}', \ldots, \omega_{n}'$ be a $\Z$-basis of $\Delta'$ chosen so that $\omega_{1}'=1$.
Then the underlying $\Z$-modules of $\mathfrak{c}^{-1}\mathfrak{a}$, 
$\mathfrak{c}^{-1}\mathfrak{b}$ are given by
HNFs over $\Z$ with respect to $\{ \omega_{1}', \ldots, \omega_{n}' \}$, say $H_{\mathfrak{a}}, H_{\mathfrak{b}} \in M_{n \times n}(\Z)$.
Consider the matrix $(H_{\mathfrak{a}} \mid H_{\mathfrak{b}})$. Then by 
\cite[\S 2.4.2]{MR1228206} we can compute $U \in GL_{2n}(\Z)$ such that
\[
(H_{\mathfrak{a}} \mid H_{\mathfrak{b}} )U = (0 \mid H),
\]
where $H$ is the HNF of $(H_{\mathfrak{a}} \mid H_{\mathfrak{b}})$. Then $H$ must be the identity matrix since $\mathfrak{c}^{-1} \mathfrak{a} + \mathfrak{c}^{-1} \mathfrak{b} = \Delta'$. 
Let $Z = U_{n+1}$ be the
$(n+1)$-st column of $U$. Then
\[
(H_{\mathfrak{a} } \mid H_{\mathfrak{b}}) Z = (H_{\mathfrak{a}} \mid H_\mathfrak{b})
\left(
  \begin{array}{c}
    Z_{\mathfrak{a}}\\Z_{\mathfrak{b}} 
  \end{array}\right) = H_{\mathfrak{a}} Z_{\mathfrak{a}} + H_{\mathfrak{b}} Z_{\mathfrak{b}} 
  =: z_{\mathfrak{a}} + z_{\mathfrak{b}}.
\]
The column vectors $z_{\mathfrak{a}}$ and $z_{\mathfrak{b}}$ correspond to 
$\alpha \in \mathfrak{c}^{-1}\mathfrak{a}$ and $\beta \in \mathfrak{c}^{-1} \mathfrak{b}$.

We now consider the following slightly more general problem. 
Let $\mathfrak{a}_{1}, \ldots, \mathfrak{a}_{n}, \mathfrak{c}$ be fractional left $\Delta$-ideals such that 
$\mathfrak{a}_{1} + \cdots + \mathfrak{a}_{n}  = \mathfrak{c}$. We wish to compute $\alpha_j \in \frc^{-1}\mathfrak{a}_j$ such that
\[
\alpha_{1} + \cdots + \alpha_{m} = 1.
\]
Let $\mathfrak{b} = \mathfrak{a}_{1} + \cdots + \mathfrak{a}_{m-1}$. 
Assume that we have $\beta_{j} \in \mathfrak{b}^{-1} \mathfrak{a}_{j}$ such that
\[
\beta_{1} + \cdots + \beta_{m-1} = 1.
\]
By the above we can find $\xi \in \frc^{-1}\mathfrak{b}$ and $\eta \in \frc^{-1} \mathfrak{a}_m$ with $\xi + \eta = 1$. Then
\[
1 = \xi ( \beta_{1} + \cdots + \beta_{m-1} ) + \eta = \xi\beta_{1} + \cdots + \xi \beta_{m-1} + \eta
\]
and $\xi\beta_{j} \in \frc^{-1}\mathfrak{b}\mathfrak{b}^{-1}\mathfrak{a}_{j} = \frc^{-1}\mathfrak{a}_j$. 
Hence we are reduced to the case $m=2$ solved above.

\begin{remark}
It is straightforward to give an analogous `right version' of this algorithm.
\end{remark}

\subsection{Noncommutative Hermite Normal Forms}\label{subsec:ncHNF}

Let $D$ be a skew field that is central and finite-dimensional over a number field $F$.
Let $\Delta \subseteq D$ be a maximal $\mathcal{O}_{F}$-order.
Let $X$ be a left $\Delta$-lattice such that $FX \cong D^{r}$, for some $r > 0$. 
By \cite[(2.44) and (2.45)(ii)]{MR1972204} there exist 
$x_1, \ldots , x_r \in FX$ and fractional left $\Delta$-ideals $\mathfrak{a}_{1}, \ldots, \mathfrak{a}_{r}$
such that
\[
X = \mathfrak{a}_{1} x_{1} \oplus \cdots \oplus \mathfrak{a}_{r} x_{r}.
\]
Our aim is to explicitly compute such a noncommutative pseudo-basis under the 
assumption that we have the following:
\renewcommand{\labelenumi}{(\roman{enumi})}
\begin{enumerate}
\item a $\Z$-basis $\omega_{1}, \ldots, \omega_{n}$ for $\Delta$ with $\omega_{1}=1$;
\item a left $D$-basis $v_{1}, \ldots, v_{r}$ for $FX$, i.e.\ $FX = Dv_{1} \oplus \cdots \oplus D v_{r}$;
and
\item $y_{1}, \ldots , y_{k} \in FX$ and fractional left $\Delta$-ideals $\mathfrak{b}_{i}$ such that 
$X = \mathfrak{	b}_{1} y_{1} + \cdots + \mathfrak{b}_{k} y_{k}$.
\end{enumerate}
Note that we must have $k \geq r$. We write 
\[
y_{j} = \sum_{i=1}^r a_{ij}v_i \text{ with } a_{ij} \in D
\]
and set
\[
A := \left( a_{ij} \right) \in M_{r \times k}(D).
\]
Then we have a `pseudo-matrix' $\mathcal{A} := (A, (\mathfrak{b}_{1}, \ldots, \mathfrak{b}_{k}))$ representing $X$.
We give noncommutative versions of some of the results of \cite[\S 1.4.2]{MR1728313} to transform 
 $\mathcal{A}$ to a HNF over $\Delta$ (the key difference being that one needs to consider carefully whether the required multiplications are on the left or the right).
In other words, we compute a pseudo-matrix
 \[
 \left( (0 \mid H), (\mathfrak{a}_{1}, \ldots, \mathfrak{a}_{r}) \right) \textrm{ where }
 H = \left(
 \begin{array}{cccc}
 1 & * & \cdots & * \\
 0 & 1 & \cdots & * \\
 \vdots & \ddots & \ddots & \vdots \\
 0 & \cdots & 0 & 1
 \end{array}
 \right)
 \in M_{r \times r}(D)
 \]
 such that
\[
\mathfrak{b}_{1} A_{1} + \cdots + \mathfrak{b}_{k} A_{k} = \mathfrak{a}_{1} H_{1} \oplus \cdots \oplus \mathfrak{a}_{r} H_{r},
\]
where $A_j$ and $H_j$ denote the $j$th columns of the matrices $A$ and $H$ respectively.
(Note that the sum $\mathfrak{a}_{1} H_{1} + \cdots + \mathfrak{a}_{r} H_{r}$ must be direct 
since the $H_{j}$'s are clearly linearly independent.)

We describe the first step, the rest being induction. We set
\[
\mathfrak{b}_{j}' :=
\begin{cases}
  \mathfrak{b}_{j} a_{rj}, & \text{ if } a_{rj} \ne 0, \\
  \mathfrak{b}_{j},        & \text{ if } a_{rj} = 0,  
\end{cases}
\quad \textrm{ and } \quad 
a_{ij}' :=
\begin{cases}
  a_{rj}^{-1}a_{ij}, & \text{ if } a_{rj} \ne 0, \\
  a_{ij}, & \text{ if } a_{rj} = 0.
\end{cases}
\]
By relabelling (removing the $'$ notation) and reordering if necessary, we may therefore assume that 
$\mathcal{A}$ is of the form
\[
\left( \left( B_{1} \mid B_{2} \right) , (\mathfrak{b}_{1}, \ldots, \mathfrak{b}_{k}) \right)
\textrm{ where } B_{1} = \left(
\begin{array}{ccc}
* & \ldots & * \\
\vdots & \ddots & \vdots  \\
* & \cdots  & *  \\
0 & \cdots & 0
\end{array} \right)
\textrm{ and } 
B_{2} = 
\left(
\begin{array}{ccc}
* & \cdots & * \\
\vdots &  \ddots & \vdots \\
* & \cdots & * \\
1 & \cdots & 1
\end{array} \right).
\]
It suffices to consider the matrix $B_{2}$, so without loss of generality we may assume that $A=B_{2}$
and $\mathcal{A} = ( A, (\mathfrak{b}_{1}, \ldots, \mathfrak{b}_{k}))$.

We explicitly compute  $\mathfrak{c} := \mathfrak{b}_{1} + \cdots + \mathfrak{b}_{k}$ by HNF techniques over $\Z$ (see \cite[\S 2.4]{MR1228206}). Using \S \ref{subsec:ex-euclid}, we then compute
$\alpha_{j} \in \mathfrak{c}^{-1} \mathfrak{b}_{j}$ such that
\[
\alpha_{1} + \cdots + \alpha_{k} = 1.
\]
Let $c := \alpha_{1} A_{1} + \cdots + \alpha_{k} A_{k}$ and let $A':= ( A_{1} - c, \ldots, A_{k} - c , c ) \in M_{r \times (k+1)}(D)$, the matrix formed by column vectors in the obvious way.
We consider the pseudo-matrix 
\[
\mathcal{A}' := \left( A', (\mathfrak{b}_1, \ldots, \mathfrak{b}_k, \frc) \right).
\]
For a pseudo-matrix $\mathcal{C} = (C , (\mathfrak{c}_{1}, \ldots, \mathfrak{c}_{m}))$ with $C \in M_{n \times m}(D)$ we set 
\[
\langle \mathcal{C} \rangle := \mathfrak{c}_1 C_{1} + \cdots + \mathfrak{c}_{m} C_{m}.
\]

\begin{lemma}
We have  $\langle \mathcal{A} \rangle = \langle \mathcal{A}' \rangle$.
\end{lemma}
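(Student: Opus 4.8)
The plan is to establish the equality $\langle \mathcal{A}\rangle = \langle \mathcal{A}'\rangle$ by proving the two inclusions separately, working directly with the generator descriptions $\langle\mathcal{A}\rangle = \mathfrak{b}_1 A_1 + \cdots + \mathfrak{b}_k A_k$ and $\langle\mathcal{A}'\rangle = \mathfrak{b}_1(A_1-c) + \cdots + \mathfrak{b}_k(A_k-c) + \mathfrak{c}\, c$, viewed as left $\Delta$-submodules of $FX \cong D^r$. The ingredients I would use are: (i) $\mathfrak{b}_j \subseteq \mathfrak{c}$ for every $j$, which is immediate from $\mathfrak{c} = \mathfrak{b}_1 + \cdots + \mathfrak{b}_k$; (ii) since $\Delta$ is a maximal $\mathcal{O}_F$-order and $\mathcal{O}_l(\mathfrak{c})$ is an $\mathcal{O}_F$-order containing $\Delta$, we have $\Delta = \mathcal{O}_l(\mathfrak{c})$, so $\mathfrak{c}\,\mathfrak{c}^{-1} = \Delta$ by \eqref{eq:inverse-ideals}, while $\Delta\,\mathfrak{b}_i = \mathfrak{b}_i$ since each $\mathfrak{b}_i$ is a left $\Delta$-ideal (so $\Delta\mathfrak{b}_i \subseteq \mathfrak{b}_i$) and $1 \in \Delta$ (so $\mathfrak{b}_i \subseteq \Delta\mathfrak{b}_i$); and (iii) the properties $\alpha_j \in \mathfrak{c}^{-1}\mathfrak{b}_j$ and $\alpha_1 + \cdots + \alpha_k = 1$ of the elements produced in \S\ref{subsec:ex-euclid}, together with the definition $c = \alpha_1 A_1 + \cdots + \alpha_k A_k$.

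For $\langle\mathcal{A}\rangle \subseteq \langle\mathcal{A}'\rangle$ it suffices to show $\mathfrak{b}_j A_j \subseteq \langle\mathcal{A}'\rangle$ for each $j$: for $b \in \mathfrak{b}_j$ write $b A_j = b(A_j - c) + b c$, where $b(A_j-c) \in \mathfrak{b}_j(A_j-c)$ and $b c \in \mathfrak{c}\,c$ because $b \in \mathfrak{b}_j \subseteq \mathfrak{c}$ by (i). For the reverse inclusion I would handle the two types of generators of $\langle\mathcal{A}'\rangle$ in turn. Given $\gamma \in \mathfrak{c}$, we have $\gamma c = \sum_i (\gamma\alpha_i) A_i$ with $\gamma\alpha_i \in \mathfrak{c}\,\mathfrak{c}^{-1}\mathfrak{b}_i = \Delta\mathfrak{b}_i = \mathfrak{b}_i$ by (ii) and (iii), so $\gamma c \in \sum_i \mathfrak{b}_i A_i = \langle\mathcal{A}\rangle$; hence $\mathfrak{c}\,c \subseteq \langle\mathcal{A}\rangle$. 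Given $b \in \mathfrak{b}_j$, we have $b(A_j-c) = b A_j - \sum_i (b\alpha_i) A_i$, where $b A_j \in \mathfrak{b}_j A_j$ and $b\alpha_i \in \mathfrak{b}_j\mathfrak{c}^{-1}\mathfrak{b}_i \subseteq \mathfrak{c}\,\mathfrak{c}^{-1}\mathfrak{b}_i = \Delta\mathfrak{b}_i = \mathfrak{b}_i$, again using (i), (ii) and (iii); thus $b(A_j-c) \in \langle\mathcal{A}\rangle$ and $\mathfrak{b}_j(A_j-c) \subseteq \langle\mathcal{A}\rangle$. Together these give $\langle\mathcal{A}'\rangle \subseteq \langle\mathcal{A}\rangle$, completing the proof.

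Since the argument is nothing more than elementary arithmetic of one-sided ideals, I do not anticipate a real obstacle. The only points requiring genuine care are bookkeeping ones: one must check that products are formed on the correct side --- in particular that $\mathfrak{b}_j\mathfrak{c}^{-1} \subseteq \Delta = \mathcal{O}_l(\mathfrak{c})$ rather than landing in $\mathcal{O}_r(\mathfrak{c})$, which is precisely why the relation $\mathfrak{b}_j \subseteq \mathfrak{c}$ is needed --- and one should note explicitly that the sums defining $\langle\mathcal{A}\rangle$ and $\langle\mathcal{A}'\rangle$ are not assumed direct, so that the inclusions are verified on individual generators rather than coordinate-by-coordinate.
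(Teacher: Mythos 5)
Your proof is correct and follows essentially the same route as the paper: the inclusion $\langle\mathcal{A}\rangle \subseteq \langle\mathcal{A}'\rangle$ via $bA_j = b(A_j-c)+bc$ with $\mathfrak{b}_j \subseteq \mathfrak{c}$, and the reverse inclusion via $s\alpha_i \in \mathfrak{c}\,\mathfrak{c}^{-1}\mathfrak{b}_i = \mathfrak{b}_i$. Your explicit remark that $\mathcal{O}_l(\mathfrak{c}) = \Delta$ by maximality (so that $\mathfrak{c}\,\mathfrak{c}^{-1} = \Delta$) just spells out a step the paper leaves implicit.
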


\begin{proof}
`$\subseteq$' Let $s \in \mathfrak{b}_j$. Then $sA_j = s(A_j - c) + sc \in \langle \mathcal{A'} \rangle$
because $s \in \mathfrak{b}_{j} \subseteq \frc$.

`$\supseteq$' Again let $s \in \mathfrak{b}_j$. Then 
\begin{eqnarray*}
  s(A_j -c) \in \langle \mathcal{A} \rangle &\iff& sc \in \langle \mathcal{A} \rangle
\iff s(\alpha_{1}A_{1} + \cdots + \alpha_{k} A_{k}) \in \langle \mathcal{A} \rangle \\
&\Longleftarrow& s\alpha_{j} \in \mathfrak{b}_j \text{ for } j = 1, \ldots, k.
\end{eqnarray*}
Since $\alpha_j \in \frc^{-1}\mathfrak{b}_j$ and $s \in \mathfrak{b}_j \subseteq \frc$, we have $s\alpha_j \in \frc\frc^{-1}\mathfrak{b}_j = \mathfrak{b}_j$.

Now suppose $s \in \frc$. Since $\alpha_j \in \mathfrak{c}^{-1}\mathfrak{b}_j$ each 
$s\alpha_j \in \mathfrak{b}_j$ for $j = 1, \ldots, k$. Hence $sc \in \langle \mathcal{A} \rangle$. 
\end{proof}

Finally, note that $A'$ is of the form
\[
 \left(
  \begin{array}{ccc}
    A_1 & \vline & * \\ \hline  0 & \vline & 1
  \end{array} \right)
\]
for some $A_{1} \in M_{(r-1)  \times k}(D)$. Hence we can now repeat the process with 
$(A_{1}, (\mathfrak{b}_{1}, \ldots, \mathfrak{b}_{k}))$ and continue inductively until we obtain a pseudo-matrix of the desired form.

\subsection{Noncommutative Steinitz form}\label{subsec:steinitz}
We assume the notation and setting of \S \ref{subsec:ncHNF}. The aim of this section is to give
an algorithmic version of \cite[(27.4)]{MR1972204}.
Given fractional left $\Delta$-ideals $\mathfrak{a}_{1}, \ldots, \mathfrak{a}_{r}$ and $x_{1}, \ldots, x_{r} \in FX$ such that
\[
X = \mathfrak{a}_{1} x_{1} \oplus \cdots \oplus \mathfrak{a}_{r} x_{r},
\]
we wish to compute a Steinitz form, i.e.\ a fractional left $\Delta$-ideal $\mathfrak{b}$
and $z_{1}, \ldots, z_{r} \in FX$ such that
\[
X = \Delta z_{1} \oplus \cdots \oplus \Delta z_{r-1} \oplus \mathfrak{b} z_{r}.
\]
(Note that without loss of generality we can in fact take $\mathfrak{b}$ to be integral.)
In general, we argue as follows
\begin{eqnarray*}
  X &=& \mathfrak{a}_{1} x_{1} \oplus \cdots \oplus \mathfrak{a}_{r} x_{r} \\
    &=& \Delta x_{1}' \oplus \mathfrak{b}_{2} x_{2}' \oplus \mathfrak{a}_{3} x_{3} \oplus \cdots \oplus 
    \mathfrak{a}_{r} x_{r} \\
    &=& \Delta x_{1}' \oplus \Delta x_{2}'' \oplus \mathfrak{b}_{3} x_{3}'' \oplus \mathfrak{a}_{4} x_{4} \oplus \cdots \oplus \mathfrak{a}_{r} x_{r} \\
    &=& etc.,
\end{eqnarray*}
so we may restrict to the case $r=2$.

For later reference we note the following lemma.

\begin{lemma}
Let $\mathfrak{a}, \mathfrak{b}$ be fractional left $\Delta$-ideals. Then
\[
\mathfrak{a} \cong \mathfrak{b} \text{ as left } \Delta-\text{modules} \iff \mathfrak{a} = \mathfrak{b} \xi \text{ for some } \xi \in D.
\]
\end{lemma}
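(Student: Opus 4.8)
The plan is to prove both implications directly. For the direction ``$\Leftarrow$'', suppose $\mathfrak{a} = \mathfrak{b}\xi$ for some $\xi \in D$; since $\mathfrak{a} \neq 0$ we have $\xi \neq 0$. Right multiplication $\rho_{\xi} : D \longrightarrow D$, $x \mapsto x\xi$, is a homomorphism of left $\Delta$-modules (left and right multiplication commute, so $(\lambda x)\xi = \lambda(x\xi)$ for $\lambda \in \Delta$), and it is injective because $D$ is a skew field. Its restriction to $\mathfrak{b}$ has image $\mathfrak{b}\xi = \mathfrak{a}$, and hence gives an isomorphism $\mathfrak{b} \xrightarrow{\ \sim\ } \mathfrak{a}$ of left $\Delta$-modules.

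For the direction ``$\Rightarrow$'', let $\varphi : \mathfrak{a} \longrightarrow \mathfrak{b}$ be an isomorphism of left $\Delta$-modules. Since $\mathfrak{a}$ is a full $\mathcal{O}_F$-lattice in $D$, it contains an $F$-basis of $D$, so $F\mathfrak{a} = D$ and every $y \in D$ can be written $y = c^{-1}x$ with $x \in \mathfrak{a}$, $c \in \mathcal{O}_F \setminus \{0\}$. One checks (using $\mathcal{O}_F \subseteq Z(\Delta)$) that $\tilde{\varphi}(y) := c^{-1}\varphi(cy)$ is well-defined and $F$-linear, giving the unique $F$-linear extension $\tilde{\varphi} : D \longrightarrow D$ of $\varphi$. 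Because $\Delta$ is a maximal $\mathcal{O}_F$-order in $D$ we have $F\Delta = D$, and therefore $\tilde{\varphi}$ is in fact a homomorphism of left $D$-modules. Setting $\xi := \tilde{\varphi}(1) \in D$, we get $\tilde{\varphi}(x) = \tilde{\varphi}(x\cdot 1) = x\,\tilde{\varphi}(1) = x\xi$ for all $x \in D$; moreover $\xi \neq 0$ since $\varphi$ is injective and $\mathfrak{a} \neq 0$. Restricting back to $\mathfrak{a}$ gives $\mathfrak{b} = \varphi(\mathfrak{a}) = \mathfrak{a}\xi$, whence $\mathfrak{a} = \mathfrak{b}\xi^{-1}$, which is of the required form.

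There is no real obstacle here; the only point requiring a little care is the passage from a $\Delta$-linear map on the lattice $\mathfrak{a}$ to a $D$-linear map on all of $D$, which uses both that $\mathfrak{a}$ spans $D$ over $F$ and that $F\Delta = D$. The identification of $\mathrm{Hom}_{D}(D, D)$ with right multiplications by elements of $D$ is standard (cf. \cite[\S 8 and \S 22]{MR1972204}).
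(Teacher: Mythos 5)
Your proof is correct, and it is exactly the standard argument that the paper compresses into the remark ``obvious, but note that $\xi$ sits on the right of $\mathfrak{b}$'': you extend the $\Delta$-linear map on the full lattice $\mathfrak{a}$ to a left $D$-linear endomorphism of $D$ and identify it with right multiplication by some $\xi\in D^{\times}$, which is precisely the point the paper takes for granted. Both the well-definedness of the extension and the final $\mathfrak{a}=\mathfrak{b}\xi^{-1}$ are handled properly, so there is nothing to add.
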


\begin{proof}
Obvious, but pay attention to the fact that $\xi$ is on the right side of $\mathfrak{b}$.
\end{proof}

We first consider the special case that 
$X = \mathfrak{a}_{1} x_{1} \oplus \mathfrak{a}_{2} x_{2}$ with 
$\mathfrak{a}_{1} + \mathfrak{a}_{2} = \Delta$. 
We compute $\alpha_{1} \in \mathfrak{a}_{1}$ and $\alpha_{2} \in \mathfrak{a}_{2}$ such that 
$\alpha_{1} + \alpha_{2} = 1$. Then there is a short exact sequence of left $\Delta$-modules
\[
0 \longrightarrow \mathfrak{a}_1 \cap \mathfrak{a}_2 \stackrel{f}\longrightarrow \mathfrak{a}_1 \oplus \mathfrak{a}_2 \stackrel{g} \longrightarrow \Delta \longrightarrow 0
\]
with $f(a) = (a, -a)$, $g((a_1, a_2)) = a_1 + a_2$. The sequence is split by
 $s : \Delta \longrightarrow \mathfrak{a}_1 \oplus \mathfrak{a}_2$ defined by $s(1) = (\alpha_1, \alpha_2)$.
Therefore $\mathfrak{a}_{1} \oplus \mathfrak{a}_{2} = \Image(f) \oplus \Image(s)$ and so
\[
X = \Delta (\alpha_{1} x_{1} + \alpha_{2} x_{2}) \oplus (\mathfrak{a}_{1} \cap \mathfrak{a}_{2}) (x_{1} - x_{2}).
\]

We now consider the general case. 
Without loss of generality we may assume $\mathfrak{a}_1, \mathfrak{a}_2 \subseteq \Delta$. 
In order to reduce to the special case it remains to find $\tilde{\mathfrak{a}}_{2} = \mathfrak{a}_{2} \xi$ with $\xi \in D$ such that $\mathfrak{a}_{1} + \tilde{\mathfrak{a}}_{2} = \Delta$. 
We follow the proof of \cite[(27.7)]{MR1972204}.
By \cite[(27.4)]{MR1972204} we have $\fra_2 \vee \Delta$, so we can apply
the algorithmic version of Roiter's Lemma (see \S \ref{subsec:alg-roiter}). 
We choose $\alpha \in \mathfrak{a}_1 \cap \mathcal{O}_{F}$ and construct
$\varphi \colon \mathfrak{a}_2 \longrightarrow \Delta$ and an $\mathcal{O}_{F}$-torsion module $T$ such that
\[
0 \longrightarrow \mathfrak{a}_2 \stackrel{\varphi}\longrightarrow \Delta \longrightarrow T \longrightarrow 0
\]
is exact and $\alpha\mathcal{O}_{F} + \ann_{\mathcal{O}_{F}} (T) = \mathcal{O}_{F}$.
Then we claim that $\varphi(\mathfrak{a}_{2}) + \mathfrak{a}_{1} = \Delta$.
Indeed, if $1 = \rho\alpha + \beta$ with 
$\rho \in \mathcal{O}_{F}, \beta \in \ann_{\mathcal{O}_{F}} (T)$, then 
$\beta\Delta \subseteq \varphi(\mathfrak{a}_{2})$;
in particular $\beta \in \varphi(\mathfrak{a}_{2})$.

\section{Modules over maximal orders}\label{sec:mods-over-max}

Let $D$ be a skew field that is central and finite-dimensional over a number field $F$, and let $n \in \N$.

\subsection{Maximal orders up to isomorphism}

\begin{prop}[{\cite[(27.6)]{MR1972204}}]\label{prop:nice-max-order}
Let $\Delta \subseteq D$ be any maximal $\mathcal{O}_{F}$-order. 
For each right ideal $\mathfrak{a}$ of $\Delta$, 
let $\Delta' = \mathcal{O}_{l}(\mathfrak{a}) := \{ x \in D \mid x\mathfrak{a} \subseteq \mathfrak{a} \}$, and let
\[
\Lambda_{\mathfrak{a}, n} := \left(
  \begin{array}{cccc}
    \Delta & \ldots & \Delta & \mathfrak{a}^{-1} \\
    \vdots & \ddots & \vdots & \vdots \\
    \Delta & \ldots & \Delta & \mathfrak{a}^{-1} \\
    \mathfrak{a}   & \ldots & \mathfrak{a}   & \Delta'
  \end{array} \right)
\]
denote the ring of all $n \times n$ matrices $(x_{ij})$ where $x_{11}$ ranges over all elements of 
$\Delta$, \textellipsis, $x_{1n}$ ranges over all elements of $\mathfrak{a}^{-1}$, and so on.
(For $n=1$, we take $\Lambda_{\mathfrak{a}, n} := \Delta'$.) 
Then $\Lambda_{\mathfrak{a}, n}$ is a maximal $\mathcal{O}_{F}$-order in $M_{n}(D)$, and every
maximal $\mathcal{O}_{F}$-order in $M_{n}(D)$ is isomorphic to $\Lambda_{\mathfrak{a}, n}$, 
for some right ideal $\mathfrak{a}$ of $\Delta$.
\end{prop}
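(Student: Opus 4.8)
The plan is to reformulate the statement in Morita-theoretic terms and then invoke the structure theory of lattices over a maximal order. Identify $M_n(D) = \End_D(V)$, where $V = D^n$ is regarded as a right $D$-vector space of dimension $n$ on which $M_n(D)$ acts on the left, so that $\Delta^{n-1} \oplus \mathfrak{a}$ is a full right $\Delta$-lattice in $V$. The first point is the identity
\[
\Lambda_{\mathfrak{a}, n} = \End_\Delta\bigl(\Delta^{n-1} \oplus \mathfrak{a}\bigr),
\]
the ring of right-$\Delta$-linear endomorphisms: writing such an endomorphism as a matrix with $(i,j)$ entry in $\mathrm{Hom}_\Delta(P_j, P_i)$, where $P_i = \Delta$ for $i < n$ and $P_n = \mathfrak{a}$, and extending scalars to $D$, one sees that each homomorphism is left multiplication by an element of the relevant one-sided colon ideal, giving $\mathrm{Hom}_\Delta(\Delta,\Delta) = \Delta$, $\mathrm{Hom}_\Delta(\Delta, \mathfrak{a}) = \mathfrak{a}$, $\mathrm{Hom}_\Delta(\mathfrak{a},\mathfrak{a}) = \calO_l(\mathfrak{a}) = \Delta'$ and $\mathrm{Hom}_\Delta(\mathfrak{a},\Delta) = \mathfrak{a}^{-1}$ (using \eqref{eq:inverse-ideals}). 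Assembling these recovers exactly the displayed array, with the ring structures matching because composition of endomorphisms corresponds to matrix multiplication of the entries under the left-multiplication identification; as throughout the paper, we ignore the distinction between a ring and its opposite.

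Maximality of $\Lambda_{\mathfrak{a},n}$ I would check one prime at a time, since an $\calO_F$-order is maximal if and only if each of its localisations is. Fix a prime $\mathfrak{p}$ of $\calO_F$. Over the discrete valuation ring $\calO_{F,\mathfrak{p}}$ the maximal order $\Delta_\mathfrak{p}$ is a (noncommutative) principal ideal ring (cf.\ \cite[\S 18]{MR1972204}), so $\mathfrak{a}_\mathfrak{p} = \theta \Delta_\mathfrak{p}$ for some $\theta \in D^\times$; then $\mathfrak{a}_\mathfrak{p}^{-1} = \Delta_\mathfrak{p}\theta^{-1}$ and $\Delta'_\mathfrak{p} = \theta\Delta_\mathfrak{p}\theta^{-1}$ by \eqref{eq:inverse-ideals}, and conjugation by $\mathrm{diag}(1, \ldots, 1, \theta^{-1}) \in GL_n(D)$ carries $(\Lambda_{\mathfrak{a},n})_\mathfrak{p}$ onto $M_n(\Delta_\mathfrak{p})$, which is a maximal $\calO_{F,\mathfrak{p}}$-order. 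Hence $\Lambda_{\mathfrak{a},n}$ is maximal.

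For the converse, let $\Lambda$ be an arbitrary maximal $\calO_F$-order in $\End_D(V) = M_n(D)$. Pick any full $\calO_F$-lattice $M_0$ in $V$ and put $M := \Lambda M_0 \Delta$, a full right $\Delta$-lattice in $V$. Then $\{ f \in \End_D(V) : f(M) \subseteq M \} = \End_\Delta(M)$ is an $\calO_F$-order in $M_n(D)$ containing $\Lambda$, so it equals $\Lambda$ by maximality; thus $\Lambda = \End_\Delta(M)$. By the Steinitz structure theorem for lattices over the maximal order $\Delta$ (\cite[(27.4)]{MR1972204}; see \S\ref{subsec:steinitz} for the algorithmic form) we have $M \cong \Delta^{n-1} \oplus \mathfrak{b}$ as right $\Delta$-modules for some fractional right $\Delta$-ideal $\mathfrak{b}$; replacing $\mathfrak{b}$ by $\xi\mathfrak{b}$ for a suitable nonzero $\xi \in \calO_F$ (legitimate since $\xi$ is central, so $\mathfrak{b} \cong \xi\mathfrak{b}$ as right $\Delta$-modules) we may assume $\mathfrak{b} = \mathfrak{a}$ is an integral right ideal of $\Delta$. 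Hence $\Lambda = \End_\Delta(M) \cong \End_\Delta(\Delta^{n-1} \oplus \mathfrak{a}) = \Lambda_{\mathfrak{a},n}$ by the first paragraph.

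I expect the main obstacle to be organisational rather than conceptual: keeping track of ``sides''. One must verify that each Hom-group is realised by left (not right) multiplication, equip $\mathfrak{a}$ and $\mathfrak{a}^{-1}$ with the $(\Delta',\Delta)$- and $(\Delta,\Delta')$-bimodule structures in the order that makes the entries of $\Lambda_{\mathfrak{a},n}$ multiply correctly, and be content with identifying $\Lambda$ with $\Lambda_{\mathfrak{a},n}$ only up to the opposite-ring convention already adopted. The one genuinely external ingredient is that a maximal order in $M_n(D)$ is the endomorphism ring of a full right $\Delta$-lattice; this is the Morita-theoretic content underlying \cite[\S 21]{MR1972204}, and with it in hand the argument above is routine.
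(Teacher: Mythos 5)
The paper offers no proof of this proposition — it is quoted verbatim from Reiner's \emph{Maximal Orders} \cite[(27.6)]{MR1972204} — and your argument reconstructs essentially the proof given there: identify $\Lambda_{\mathfrak{a},n}$ with $\End_{\Delta}(\Delta^{n-1}\oplus\mathfrak{a})$, check maximality prime by prime using that $\mathfrak{a}_{\mathfrak{p}}$ becomes principal, and deduce the converse from the fact that every maximal order in $\End_D(V)$ is the left order (endomorphism ring) of a full right $\Delta$-lattice \cite[(21.6)]{MR1972204} combined with the Steinitz theorem \cite[(27.4)]{MR1972204}. So your proposal is correct and follows essentially the same route as the cited source (the only point deserving one extra line is that principality of $\mathfrak{a}_{\mathfrak{p}}$ over the non-complete localisation is obtained from the complete case via the standard genus/completion comparison).
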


\subsection{Nice maximal orders}
We fix a maximal $\mathcal{O}_{F}$-order $\Delta \subseteq D$ and suppose that $n \geq 2$.
We say that a maximal $\mathcal{O}_{F}$-order $\Lambda$ in $M_{n}(D)$ is `nice' if it is equal to $\Lambda_{\mathfrak{a},n}$ for some right ideal $\mathfrak{a}$ of $\Delta$. We fix such a $\Lambda$ for the rest of this subsection.

We now give a noncommutative version of \cite[Proposition 5.3]{MR2422318}. 
In other words, we solve the problem of determining whether a left $\Lambda$-module $X$ is free of finite rank, and if so, whether generators can be computed. 
Let $e_{k,l}$ denote the matrix $(x_{i,j}) \in M_{n}(D)$ with $x_{i,j}=0$ for 
$(i,j) \neq (k,l)$ and $x_{k,l}=1$.

\begin{prop}\label{prop:gen-nice-max}
Let $X$ be a left $\Lambda$-module. Then $X$ is free of rank $d$ over $\Lambda$ if and only if
there exist $\omega_{i,j} \in X$ such that
\begin{equation}\label{eq:max-free}
e_{1,1}X 
= (\Delta\omega_{1,1} \oplus \cdots \oplus \Delta \omega_{1,n-1} \oplus \mathfrak{a}^{-1}\omega_{1,n}) \oplus \ldots \oplus 
(\Delta\omega_{d,1} \oplus \cdots \oplus \Delta \omega_{d,n-1} \oplus \mathfrak{a}^{-1}\omega_{d,n}).
\end{equation}
Further, when this is the case, $X = \Lambda \omega_{1} \oplus \cdots \oplus \Lambda \omega_{d}$
where $\omega_{j} := e_{1,1}\omega_{j,1} + \cdots + e_{n,1} \omega_{j,n}$, $j=1, \ldots, d$.
\end{prop}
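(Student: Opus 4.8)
The plan is to exploit the Morita-type decomposition induced by the idempotents $e_{k,l} \in M_n(D)$ to reduce the freeness of $X$ over $\Lambda = \Lambda_{\mathfrak{a},n}$ to a statement about the $\Delta$-module structure of $e_{1,1}X$. First I would record the key ring-theoretic facts: since $\sum_{k=1}^n e_{k,k} = 1$ and the $e_{k,k}$ are orthogonal idempotents, we have $X = \bigoplus_{k=1}^n e_{k,k}X$ as $\mathcal{O}_F$-modules, and multiplication by $e_{k,1}$ gives maps between the summands. The $(1,1)$-corner ring is $e_{1,1}\Lambda e_{1,1} \cong \Delta$, so $e_{1,1}X$ is naturally a left $\Delta$-module, and one checks from the explicit shape of $\Lambda_{\mathfrak{a},n}$ that $e_{1,1}\Lambda = \Delta \oplus \cdots \oplus \Delta \oplus \mathfrak{a}^{-1}$ (as a left $\Delta$-module, via the entries of the first row) and that left multiplication by the first column $\Lambda e_{1,1}$ reconstitutes all of $\Lambda$ from its $(1,1)$-corner.

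For the forward direction, suppose $X = \Lambda\omega_1 \oplus \cdots \oplus \Lambda\omega_d$ is free. Applying $e_{1,1}$ gives $e_{1,1}X = \bigoplus_{j=1}^d e_{1,1}\Lambda\omega_j$, and since $e_{1,1}\Lambda = e_{1,1}\Lambda e_{1,1} \oplus e_{1,1}\Lambda e_{2,1}\cdots$ — more precisely, decomposing $e_{1,1}\Lambda$ along the right idempotents $e_{l,l}$ — one gets $e_{1,1}\Lambda\omega_j = \bigoplus_{l=1}^n (e_{1,1}\Lambda e_{l,l})(e_{l,l}\omega_j)$, where $e_{1,1}\Lambda e_{l,l} = \Delta$ for $l<n$ and $\mathfrak{a}^{-1}$ for $l=n$. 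Setting $\omega_{j,l} := e_{1,1}\omega_j$ appropriately — actually $\omega_{j,l}$ should be chosen so that $e_{1,1}\Lambda e_{l,1}\omega_j = (e_{1,1}\Lambda e_{l,l})\omega_{j,l}$ — yields \eqref{eq:max-free}, provided one checks the sum is direct, which follows from the directness of $\bigoplus_j \Lambda\omega_j$ together with the directness of the corner decomposition. Conversely, given $\omega_{i,j} \in X$ satisfying \eqref{eq:max-free}, I set $\omega_j := \sum_{k=1}^n e_{k,1}\omega_{j,k}$ and must show $X = \bigoplus_j \Lambda\omega_j$; the point is that $\Lambda\omega_j = \Lambda e_{1,1}\omega_j$ (since $e_{1,1}\omega_j = \sum_k e_{1,1}e_{k,1}\omega_{j,k} = \omega_{j,1}$, hmm — I need to be careful that $e_{1,1}\omega_j$ recovers enough), and more robustly that applying the corner functor $e_{1,1}(-)$ is faithful and exact on left $\Lambda$-modules in the genus of free modules, so an equality after applying $e_{1,1}$ lifts to an equality of $\Lambda$-modules. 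Then $e_{1,1}\Lambda\omega_j$ unwinds to exactly the $j$-th parenthesised block in \eqref{eq:max-free}, so $e_{1,1}X = \bigoplus_j e_{1,1}\Lambda\omega_j = e_{1,1}(\bigoplus_j \Lambda\omega_j)$, and faithfulness gives $X = \bigoplus_j \Lambda\omega_j$.

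The main obstacle, and the step deserving the most care, is the precise bookkeeping of \emph{which side} the module actions sit on: $\Lambda_{\mathfrak{a},n}$ has the ideal $\mathfrak{a}^{-1}$ in the last column and $\mathfrak{a}$ in the last row, and because $D$ is noncommutative one cannot freely commute scalars past matrix units. I would therefore verify carefully that $e_{1,1}X$ acquires its $\Delta$-module structure via $e_{1,1}\Lambda e_{1,1} \cong \Delta$ acting on the \emph{left}, that $e_{l,1}$ maps $e_{1,1}X$ into $e_{l,l}X$ compatibly, and that the reconstruction $\omega_j = \sum_k e_{k,1}\omega_{j,k}$ together with $\Lambda = \Lambda e_{1,1}\Lambda$ (a consequence of $\Lambda$ being a nice maximal order, equivalently of Proposition \ref{prop:nice-max-order} applied to the corner) yields $\Lambda\omega_j = \Lambda(\sum_k e_{k,1}\omega_{j,k})$ with the claimed internal structure. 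Once the corner-functor equivalence (Morita theory for the idempotent $e_{1,1}$, noting $\Lambda e_{1,1}\Lambda = \Lambda$) is set up with the correct handedness, both directions become formal, and the rank statement $X = \Lambda\omega_1 \oplus \cdots \oplus \Lambda\omega_d$ drops out of the same computation. This is the promised noncommutative analogue of \cite[Proposition 5.3]{MR2422318}, with the skew field $D$ in place of a field and $\mathfrak{a}^{-1}$ in place of the trivial ideal.
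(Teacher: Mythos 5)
Your proposal is correct in substance and is built on the same skeleton as the paper's proof: the orthogonal idempotents $e_{k,k}$, the identification $e_{1,1}\Lambda e_{1,1}\cong\Delta$ with first row $e_{1,1}\Lambda\cong\Delta^{n-1}\oplus\mathfrak{a}^{-1}$, the assignment $\omega_{j}:=\sum_{k}e_{k,1}\omega_{j,k}$, and the identity $e_{1,k}\omega_{j}=\omega_{j,k}$. Where you genuinely differ is in how the inclusion $X\subseteq\Lambda\omega_{1}\oplus\cdots\oplus\Lambda\omega_{d}$ is obtained: the paper verifies it by hand, writing $e_{i,i}X=e_{i,1}e_{1,1}e_{1,i}X$ and, for $i=n$, inserting $\mathfrak{a}\mathfrak{a}^{-1}$ so that only elements of $\Lambda$ ever act; you instead invoke fullness of the corner idempotent, $\Lambda e_{1,1}\Lambda=\Lambda$, so that $X=\Lambda e_{1,1}\Lambda X=\Lambda(e_{1,1}Y)\subseteq Y$ for any submodule $Y$ with $e_{1,1}Y=e_{1,1}X$; the same faithfulness also reflects directness of the sum (a submodule killed by $e_{1,1}$ is zero). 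This is a clean repackaging, and it makes visible where maximality of $\Delta$ enters. Note only that $\Lambda e_{1,1}\Lambda=\Lambda$ is not really Proposition~\ref{prop:nice-max-order} ``applied to the corner'': it is exactly the block computation using $\mathfrak{a}^{-1}\mathfrak{a}=\Delta$ and $\mathfrak{a}\mathfrak{a}^{-1}=\mathcal{O}_{l}(\mathfrak{a})$ from \eqref{eq:inverse-ideals}, i.e.\ the same facts the paper quotes at the start of its converse, and it should be carried out explicitly.

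One step your Morita shortcut does not deliver, and which you do not address, is the containment $\Lambda\omega_{j}\subseteq X$ (equivalently $\omega_{j}\in X$): since $1\notin\mathfrak{a}$ in general, $e_{n,1}\notin\Lambda$, so $e_{n,1}\omega_{j,n}\in X$ is not formal. The paper's short argument --- from $\mathfrak{a}^{-1}\omega_{j,n}\subseteq e_{1,1}X$ and $1\in\mathfrak{a}\mathfrak{a}^{-1}$ deduce $\omega_{j,n}\in\mathfrak{a}\,e_{1,1}X$, then use $e_{n,1}\mathfrak{a}\subseteq\Lambda$ --- is still needed, because the fullness argument only yields $X\subseteq\sum_{j}\Lambda\omega_{j}$ and must be paired with $\sum_{j}\Lambda\omega_{j}\subseteq X$ to obtain the stated equality. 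With that check added, your argument is complete.
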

\begin{proof}
Suppose that $X$ is free of rank $d$ over $\Lambda$. 
Then $e_{1,1}$ `cuts out the first row of each $\Lambda$' in $X \cong \oplus_{i=1}^{d} \Lambda$
and so $e_{1,1}X$ is of the desired form. 

Conversely, suppose that $e_{1,1}X$ is of the form given in \eqref{eq:max-free}. 
Note that since $\mathfrak{a}$ is a right $\Delta$-ideal, 
$\mathfrak{a}^{-1}$ is a left $\Delta$-ideal and by \eqref{eq:inverse-ideals} (with left and right reversed)
we have 
\[
1 \in \mathfrak{a}^{-1}\mathfrak{a} = \Delta \quad \textrm{ and } \quad 1 \in \mathfrak{a}\mathfrak{a}^{-1} = \Delta' = \mathcal{O}_{l}(\mathfrak{a}).
\]

We claim that $\Lambda\omega_{1} \oplus \cdots \oplus \Lambda\omega_{d} \subseteq X$, which reduces to showing that $\omega_{j} \in X$ for each $j$.
If $i \ne n$, then $\omega_{j,i} \in e_{1,1} X \subseteq X$, 
therefore $e_{i,1}\omega_{j,i} \in e_{i,1}X \subseteq X$. 
Furthermore, $\omega_{j,n} \in \mathfrak{a} e_{1,1} X \subseteq \mathfrak{a} X$ and hence 
$e_{n,1}\omega_{j,n} \in e_{n,1} \mathfrak{a} X \subseteq X$ 
(because $e_{n,1}\mathfrak{a} \subseteq \Lambda$).

It remains to show that $X \subseteq \Lambda\omega_{1} \oplus \cdots \oplus \Lambda\omega_{d}$. 
Note that $X = e_{1,1}X + \cdots + e_{n,n}X$.
We use the equality
$e_{1,k} \omega_j = \omega_{j, k}$ and note that for $i,j$ one has $e_{i,j}\mathfrak{a} = \mathfrak{a} e_{i,j}$. Then
\begin{eqnarray*}
e_{1,1} X 
&=& \underbrace{\Delta e_{1,1}}_{\subseteq \Lambda} \omega_{1} \oplus \cdots \oplus
               \underbrace{\Delta e_{1,n-1}}_{\subseteq \Lambda} \omega_{1} \oplus 
               \underbrace{\mathfrak{a}^{-1}e_{1,n}}_{\subseteq \Lambda} \omega_{1}
               \oplus \cdots \oplus 
               \underbrace{\Delta e_{1,n-1}}_{\subseteq \Lambda} \omega_{d}
               \oplus 
               \underbrace{\mathfrak{a}^{-1}e_{1,n}}_{\subseteq \Lambda} \omega_{d}
               \\
           &\subseteq& \Lambda\omega_{1} \oplus \cdots \oplus \Lambda\omega_{d}. 
\end{eqnarray*}
For $i \ne 1,n$ we have
\begin{eqnarray*}
  e_{i,i}X &=& e_{i,1} e_{1,1} e_{1,i} X \subseteq e_{i,1} e_{1,1} X \\
         &\subseteq& e_{i,1} (\Lambda\omega_1 \oplus \cdots \oplus \Lambda\omega_d) \subseteq
                 \Lambda\omega_1 \oplus \cdots \oplus \Lambda\omega_d.
\end{eqnarray*}
Finally we observe
\begin{eqnarray*}
  e_{n,n}X &=& e_{n,1} e_{1,1} e_{1,n} X  \subseteq e_{n,1} e_{1,1} \mathfrak{a} \underbrace{\mathfrak{a}^{-1} e_{1,n}}_{\subseteq \Lambda} X \\
         &\subseteq&  e_{n,1} e_{1,1} \mathfrak{a} X = e_{n,1} \mathfrak{a} e_{1,1} X \\
         &\subseteq& \underbrace{e_{n,1} \mathfrak{a}}_{\subseteq \Lambda}(\Lambda\omega_{1} \oplus \cdots \oplus \Lambda\omega_{d})
                 \subseteq \Lambda\omega_{1} \oplus \cdots \oplus \Lambda\omega_{d}.
\end{eqnarray*}
Therefore $X = e_{1,1}X \oplus \cdots \oplus e_{n,n}X \subseteq 
\Lambda \omega_{1} \oplus \cdots \oplus \Lambda \omega_{d}$.
\end{proof}

\subsection{Arbitrary maximal orders}
We can compute a maximal order containing a given order using \cite[Kapitel 3 and 4]{friedrichs}. However, the resulting maximal order is not necessarily nice. We address this problem by 
generalising \cite[Lemma 5.2]{MR2422318} in the following way.

We fix a maximal $\mathcal{O}_{F}$-order $\Lambda \subseteq M_{n}(D)$ and assume that it is given by an $\mathcal{O}_{F}$-pseudo-basis. 
We fix any maximal $\mathcal{O}_{F}$-order $\Delta \subseteq D$ and suppose $n \geq 2$.
We construct a right $\Delta$-lattice $N \subseteq V := D^{n}$ (column vectors) by following the proof of \cite[(21.6)]{MR1972204}. 
Let $M := \Delta^{n} \subseteq V$ and $\Lambda':= \mathcal{O}_{l}(M)$.
Then $\Lambda' = M_{n}(\Delta)$ and by Proposition \ref{prop:nice-max-order} we see that
$\Lambda'$ is a maximal $\mathcal{O}_{F}$-order in $M_{n}(D)$. 
Since $\Lambda$ and $\Lambda'$ are a pair of full $\mathcal{O}_{F}$-lattices in $M_{n}(D)$, for all but finitely many primes $\mathfrak{p}$ of $\mathcal{O}_{F}$ we have 
$\Lambda_{\mathfrak{p}} = \Lambda'_{\mathfrak{p}}$. 
The primes $\mathfrak{p}_{1}, \ldots, \mathfrak{p}_{r}$ for which 
$\Lambda_{\mathfrak{p}_{i}} \neq \Lambda'_{\mathfrak{p}_{i}}$
are determined by the generalised module index $[\Lambda : \Lambda']_{\mathcal{O}_{F}}$, 
which can be computed as follows. Compute $\mathcal{O}_{F}$-pseudo-bases so that
$\Lambda = \oplus_{i=1}^{t} \mathfrak{a}_{i} \omega_{i}$ and 
$\Lambda' = \oplus_{i=1}^{t} \mathfrak{b}_{i} \nu_{i}$ where $\mathfrak{a}_{i}, \mathfrak{b}_{i}$
are fractional $\mathcal{O}_{F}$-ideals and
find $c_{ij} \in F$ such that $\nu_{i} = \sum_{j=1}^t c_{ij} \omega_{j}$.
Then 
$[\Lambda : \Lambda']_{\mathcal{O}_{F}} = 
\det(c_{ij}) \prod_{i=1}^{t} \mathfrak{b}_{i} \mathfrak{a}_{i}^{-1}$.

For each prime $\mathfrak{p}$ of $\mathcal{O}_{F}$, we compute $u_{\mathfrak{p}} \in GL_{n}(D)$ such that 
$u_{\mathfrak{p}} \Lambda_{\mathfrak{p}}' u_{\mathfrak{p}}^{-1} =  \Lambda_{\mathfrak{p}}$, 
taking $u_{\mathfrak{p}}=1$ for $\mathfrak{p} \neq \mathfrak{p}_{1}, \ldots, \mathfrak{p}_{r}$.
To do this, we follow the proof of \cite[(17.3)(ii)]{MR1972204}, to which we refer the reader for more details. Note that $\Lambda_{\mathfrak{p}} \Lambda'_{\mathfrak{p}}$ is a full right 
$\Lambda_{\mathfrak{p}}'$-lattice in $M_{n}(D)$. 
In fact, $\Lambda_{\mathfrak{p}} \Lambda'_{\mathfrak{p}}$ is a free rank $1$ right 
$\Lambda_{\mathfrak{p}}'$-module. 
So using the algorithm given in \cite[\S 4.2]{MR2564571} we compute $u_{\mathfrak{p}}$ such that 
$\Lambda_{\mathfrak{p}} \Lambda'_{\mathfrak{p}} = u_{\mathfrak{p}} \Lambda'_{\mathfrak{p}}$.
In fact, $u_{\mathfrak{p}}$ is the element we require.

We now define $N := \cap_{\mathfrak{p}} u_{\mathfrak{p}} M_{\mathfrak{p}}$, 
giving $\Lambda = \mathcal{O}_{l}(N)$. Without loss of generality, 
we may assume that $u_{\mathfrak{p}_{i}} \in \Lambda$ for $i=1, \ldots, r$ and hence that $N \subseteq M$.
Therefore it remains to compute the finite intersection 
$N = \cap_{i=1}^{r} ( u_{\mathfrak{p}_{i}} M_{\mathfrak{p}_{i}} \cap M )$. 

We can compute each $u_{\mathfrak{p}_{i}} M_{\mathfrak{p}_{i}} \cap M$ as follows.
Let $A_{i}$ be a set of representatives of $M / u_{\mathfrak{p}_{i}} M$. 
Set $B_{i} := A_{i} \cap u_{\mathfrak{p}_{i}} M_{\mathfrak{p}_{i}}$. 
Note that for each individual element $a \in A_{i}$ one can easily check whether 
$a \in u_{\mathfrak{p}_{i}} M_{\mathfrak{p}_{i}}$. Let $C_{i}$ be a $\Z$-spanning set 
of $u_{\mathfrak{p}_{i}} M$. 
Then $B_{i} \cup C_{i}$ spans $u_{\mathfrak{p}_{i}} M_{\mathfrak{p}_{i}} \cap M$.

Hence we are reduced to computing the intersection of any two full $\mathcal{O}_{F}$-sublattices
$X,Y \subset M_{n}(D)$.  
For any full $\mathcal{O}_{F}$-sublattice $Z \subset M_{n}(D)$, we set
$Z^{*} := \{ \alpha \in M_{n}(D) \mid \mathrm{tr}(\alpha, Z) \subseteq \mathcal{O}_{F} \}$
where $\mathrm{tr}: M_{n}(D) \times M_{n}(D) \longrightarrow F$ is the bilinear reduced trace form (see \cite[p.126]{MR1972204}).
Then we have $X \cap Y = (X^{*} + Y^{*})^{*}$, which can be computed using HNF techniques over 
$\mathcal{O}_{F}$.

In summary, we have computed a lattice $N$ such that $\Lambda = \mathcal{O}_{l}(N)$.  
We now apply (the right version of) the Steinitz form algorithm of \S \ref{subsec:steinitz} to compute 
$z_{1}, \ldots, z_{n} \in V$ and a right $\Delta$-ideal $\mathfrak{a}$ such that
\[
N = z_{1}\Delta \oplus \cdots \oplus z_{n-1} \Delta \oplus z_{n}\mathfrak{a}.
\]

\begin{lemma}\label{lem:nice-form}
Let $S = (z_{1}, \ldots, z_{n}) \in GL_{n}(D)$ be the matrix with columns $z_{1}, \ldots, z_{n}$. 
Then $\Lambda = S \Lambda_{\mathfrak{a}, n} S^{-1}$.
\end{lemma}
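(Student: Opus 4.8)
The plan is to reduce the claim to a direct matrix computation after passing to a ``standard'' lattice. First I would introduce the lattice
\[
M' := e_{1}\Delta \oplus \cdots \oplus e_{n-1}\Delta \oplus e_{n}\mathfrak{a} \subseteq V = D^{n},
\]
where $e_{1}, \ldots, e_{n}$ denote the standard basis column vectors. Since the $j$th column of $S$ is $z_{j}$ we have $S e_{j} = z_{j}$, and since $V$ is an $(M_{n}(D), D)$-bimodule (left multiplication by matrices commutes with entrywise right multiplication by $D$) we get $S(e_{j}\delta) = z_{j}\delta$ for all $\delta \in D$. Hence $SM' = z_{1}\Delta + \cdots + z_{n-1}\Delta + z_{n}\mathfrak{a} = N$, the sum being direct because $z_{1}, \ldots, z_{n}$ form a right $D$-basis of $V$ (equivalently, $S \in GL_{n}(D)$).

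Next I would record the elementary conjugation identity $\mathcal{O}_{l}(uP) = u\,\mathcal{O}_{l}(P)\,u^{-1}$, valid for any full $\mathcal{O}_{F}$-lattice $P \subseteq V$ and any $u \in GL_{n}(D)$, which holds because $x(uP) \subseteq uP \iff (u^{-1}xu)P \subseteq P$. Applying it with $u = S$, $P = M'$ and using $N = SM'$ gives
\[
\Lambda = \mathcal{O}_{l}(N) = \mathcal{O}_{l}(SM') = S\,\mathcal{O}_{l}(M')\,S^{-1},
\]
so everything reduces to proving $\mathcal{O}_{l}(M') = \Lambda_{\mathfrak{a},n}$.

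For that final equality I would argue directly: writing $x = (x_{ij}) \in M_{n}(D)$, the condition $xM' \subseteq M'$ amounts to $x e_{j}\delta \in M'$ for all $j < n$, $\delta \in \Delta$, together with $x e_{n}\alpha \in M'$ for all $\alpha \in \mathfrak{a}$. Since $x e_{j}$ is the $j$th column of $x$, reading these off coordinate by coordinate yields exactly: $x_{ij} \in \Delta$ for $i,j < n$; $x_{nj} \in \mathfrak{a}$ for $j < n$ (using that $\mathfrak{a}$ is a right $\Delta$-ideal); $x_{in}\mathfrak{a} \subseteq \Delta$ for $i < n$; and $x_{nn}\mathfrak{a} \subseteq \mathfrak{a}$, i.e.\ $x_{nn} \in \mathcal{O}_{l}(\mathfrak{a}) = \Delta'$. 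The only step that is not purely formal is the identification $\{ y \in D \mid y\mathfrak{a} \subseteq \Delta \} = \mathfrak{a}^{-1}$: ``$\supseteq$'' is immediate from $\mathfrak{a}^{-1}\mathfrak{a} = \Delta$ (see \eqref{eq:inverse-ideals}), and ``$\subseteq$'' holds since $y\mathfrak{a} \subseteq \Delta$ forces $\mathfrak{a} y \mathfrak{a} \subseteq \mathfrak{a}\Delta = \mathfrak{a}$, whence $y \in \mathfrak{a}^{-1}$ by definition. Comparing the resulting description of $\mathcal{O}_{l}(M')$ with the matrix ring $\Lambda_{\mathfrak{a},n}$ of Proposition \ref{prop:nice-max-order} completes the proof. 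The argument is short; the only real hazard is keeping the left/right conventions consistent --- in particular remembering that $\mathfrak{a}$ is a right $\Delta$-ideal with $\mathcal{O}_{r}(\mathfrak{a}) = \Delta$ (automatic since $\Delta$ is maximal) and that $\mathfrak{a}^{-1}$ is the associated $(\Delta,\Delta')$-bimodule, so that the ``inverse'' appearing in $\Lambda_{\mathfrak{a},n}$ is the correct one.
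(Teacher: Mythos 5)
Your proposal is correct and follows essentially the same route as the paper: the paper also writes $N = S\,(\Delta,\ldots,\Delta,\mathfrak{a})^{\mathrm{T}}$ and observes $\lambda N \subseteq N \iff S^{-1}\lambda S \in \mathcal{O}_{l}\bigl((\Delta,\ldots,\Delta,\mathfrak{a})^{\mathrm{T}}\bigr) = \Lambda_{\mathfrak{a},n}$. The only difference is that the paper simply asserts this last identification of the left order (it is implicit in Proposition \ref{prop:nice-max-order}), whereas you verify it entry by entry, including the characterisation $\{y \in D \mid y\mathfrak{a} \subseteq \Delta\} = \mathfrak{a}^{-1}$.
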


\begin{proof}
Let $\Delta' = \mathcal{O}_{l}(\mathfrak{a})$. Then we have
\begin{eqnarray*}
\lambda \in \Lambda = \mathcal{O}_{l}(N)
&\iff&  \lambda N \subseteq N \iff \lambda S \left(
    \begin{array}{c}
      \Delta\\ \vdots \\ \Delta \\ \mathfrak{a}
    \end{array}\right) \subseteq S \left(
    \begin{array}{c}
      \Delta\\ \vdots \\ \Delta \\ \mathfrak{a}
    \end{array}\right) \\
&\iff& S^{-1} \lambda S \in \calO_l\left( \left(
    \begin{array}{c}
      \Delta\\ \vdots \\ \Delta \\ \mathfrak{a}
    \end{array}\right) \right) = 
\left(
  \begin{array}{cccc}
    \Delta & \cdots & \Delta & \mathfrak{a}^{-1} \\
    \vdots & \ddots & \vdots & \vdots \\
    \Delta & \cdots & \Delta & \mathfrak{a}^{-1} \\
    \mathfrak{a}   & \cdots & \mathfrak{a}   & \Delta'
  \end{array} \right) = \Lambda_{\mathfrak{a}, n}.
\end{eqnarray*}
\end{proof}

Hence replacing $\Lambda$ by $S^{-1}\Lambda S$ and a $\Lambda$-module $X$ by $S^{-1}X$, we may without loss of generality suppose that our maximal order is nice.

\subsection{Step (5) of Algorithm \ref{alg:the-alg}}\label{subsec:step5-outline}
Input: $\mathcal{M}_{i}$ and $\mathcal{M}_{i}X$ ($i$ fixed). We abuse notation by abbreviating 
$\mathcal{M}_{i}X$ to $X$.
\renewcommand{\labelenumi}{(\roman{enumi})}
\begin{enumerate}
\item Suppose $n=1$. 
Then $\mathcal{M}_{i}=\Delta$ for some maximal $\mathcal{O}_{F}$-order $\Delta \subseteq D$.
Use \S \ref{subsec:steinitz} to compute a Steinitz form 
$
X = \Delta b_{1} \oplus \cdots \oplus \Delta b_{d-1} \oplus \mathfrak{b} b_{d}.
$
Using (H2\textprime)(d), check whether $\mathfrak{b}$ is principal, and if so, compute $\xi \in D$ such that $\mathfrak{b}= \Delta \xi$; in this case, $b_{1}, \ldots, b_{d-1}, \xi b_{d}$ is the required $\mathcal{M}_{i}$-basis for $X$. Otherwise the algorithm terminates with the conclusion that the desired generators do not exist,
thanks to (H2\textprime)(a). 
\item We are now reduced to the case $n \geq 2$. Fix any maximal $\mathcal{O}_{F}$-order $\Delta \subseteq D$.
\item Set $\Lambda=S^{-1}\mathcal{M}_{i}S$ and replace $X$ by 
$S^{-1}X$ where $S$ is as in Lemma \ref{lem:nice-form}.
It is straightforward to see that it now suffices to determine elements 
$\omega_{1,1}, \ldots, \omega_{d,n}$ satisfying equation \eqref{eq:max-free} in Proposition 
\ref{prop:gen-nice-max}.
\item Use \S \ref{subsec:steinitz} to compute a Steinitz form 
\[ 
e_{1,1}X = \Delta b_{1} \oplus \cdots \oplus \Delta b_{dn-1} \oplus \mathfrak{b} b_{dn}.
\]
\item Again use \S \ref{subsec:steinitz} to compute a left $\Delta$-ideal $\mathfrak{c}$ and an explicit
isomorphism
\[
\varphi: \oplus_{j=1}^{d} \mathfrak{a}^{-1} \stackrel{\sim}{\longrightarrow} \oplus_{j=1}^{d-1} \Delta \oplus \mathfrak{c}.
\]
\item Using (H2\textprime)(d), check whether $\mathfrak{b} \cong \mathfrak{c}$ as left $\Delta$-ideals, and if so, compute $\xi \in D$ such that $\mathfrak{b} = \mathfrak{c} \xi$. 
Otherwise the algorithm terminates with the conclusion that the desired generators do not exist,
thanks to (H2\textprime)(a) (see Remark \ref{rmk:drop-cancellation}).
\item If a suitable $\xi \in D$ is found in the previous step then we have
  \begin{eqnarray*}
    && \Delta b_{d,(n-1)+1} \oplus \cdots \oplus \Delta b_{dn - 1} \oplus \mathfrak{b} b_{dn} \\
    &=& \Delta b_{d(n-1)+1} \oplus \cdots \oplus \Delta b_{dn - 1} \oplus \frc \xi b_{dn} \\
    &=& \mathfrak{a}^{-1} b'_{d(n-1)+1} \oplus  
    \cdots \oplus \mathfrak{a}^{-1} b'_{dn},
  \end{eqnarray*}
where the $b'_{d(n-1)+1}, \ldots,  b'_{dn}$ are computed from 
$b_{d(n-1)+1}, \ldots, b_{dn-1}, \xi b_{dn}$ using the isomorphism $\varphi$.
It is now clear how to choose the elements $\omega_{1,1}, \ldots \omega_{d,n}$.
\end{enumerate}

\begin{remark}\label{rmk:drop-cancellation}
Suppose that $\mathfrak{b}$ and $\mathfrak{c}$ are as described in steps (iv) and (v). Then
\begin{eqnarray*}
\mathfrak{b} \cong \mathfrak{c} &\implies&
\Delta^{d-1} \oplus \mathfrak{b} \cong \Delta^{d-1} \oplus \mathfrak{c} \\
&\implies&
\Delta^{dn-1} \oplus \mathfrak{b} \cong \Delta^{(n-1)d} \oplus 
\left( \oplus_{j=1}^{d} \mathfrak{a}^{-1} \right) \\
&\iff&
e_{11}X \cong \Delta^{(n-1)d} \oplus 
\left( \oplus_{j=1}^{d} \mathfrak{a}^{-1} \right) \\
&\iff& 
X \textrm{ is free over } \mathcal{M}_{i}.
\end{eqnarray*}
If $\mathfrak{b} \cong \mathfrak{c}$, then the desired generators can be computed as described above, whether or not (H2\textprime)(a) holds. The purpose of (H2\textprime)(a) is to ensure that the first two implication arrows above are in fact equivalences; so if $\mathfrak{b} \ncong \mathfrak{c}$, then the algorithm terminates in step (iv) with the conclusion that the desired generators do not exist. 
However, if (H2\textprime)(a) does not hold and $\mathfrak{b} \ncong \mathfrak{c}$, then generators may or may not exist.
\end{remark}

\begin{remark}
If $nd=1$, then we are immediately reduced to solving the principal ideal problem (i.e.\ applying (H2\textprime)(d)) in step (i), so there is no need for $\Delta$ to have locally free cancellation; this is the reason for `if $nd>1$' in (H2\textprime)(a). Also note that a simplified version of Remark \ref{rmk:drop-cancellation} applies to step (i) when $n=1$ and $d>1$.
\end{remark}

\section{Enumerating Units}\label{sec:enunits}

Let $d,n \in \N$ and let $F$ be a number field. 
Let $D$ be a skew field with centre $F$ and let
$\Lambda$ be some maximal $\mathcal{O}_{F}$-order in $M_{n}(D)$. 
If $n=1$, then $\Lambda=\Delta$ for some maximal $\mathcal{O}_{F}$-order $\Delta$ of $D$.
If $n \geq 2$, we may choose a maximal $\mathcal{O}_{F}$-order $\Delta$ of $D$ and
by Lemma \ref{lem:nice-form} we may assume that $\Lambda$ is nice, i.e.\ of the form
$\Lambda_{\mathfrak{a}, n}$ for some right $\Delta$-ideal $\mathfrak{a}$. 
Let $\mathfrak{g}$ be some non-zero ideal of $\mathcal{O}_{F}$ and set $\overline{\Delta} := \Delta / \frg\Delta$. Let $\mathfrak{f} := \mathfrak{g} \Lambda$ and set 
$\overline{\Lambda} := \Lambda/\mathfrak{f}$. 
Throughout this section, we identify $M_{d}(\Lambda)$ with a subring of $M_{dn}(D)$ in the obvious way. 
We wish to compute a set of representatives $U \subset GL_{d}(\Lambda)$ of the image of
the natural projection map 
$\pi: GL_{d}(\Lambda) \longrightarrow GL_{d}(\overline{\Lambda})$, thereby generalising the results
of \cite[\S 6]{MR2422318}.

\subsection{A reduction step}\label{subsec:reduction-step}

As in the last paragraph of \S \ref{subsec:steinitz}, we compute $\xi \in D$ and a right $\Delta$-ideal 
$\mathfrak{b}$ such that $\mathfrak{a} = \xi \mathfrak{b}$ and $\mathfrak{b} + \frg \Delta = \Delta$.
By a special case of the noncommutative extended Euclidean algorithm given  in \S \ref{subsec:ex-euclid}, we can find $b \in \mathfrak{b}$ and $y \in \frg \Delta$ such that $b + y = 1$. 
We define diagonal matrices
\[
\Phi_{1} := \left(
  \begin{array}{cccc}
    1 & & & \\  & \ddots & &  \\ & & 1 &  \\ & & & \xi^{-1} \\
  \end{array} \right) 
\quad \textrm{ and } \quad
\Phi_{2} := \left(
  \begin{array}{cccc}
    1 & & &  \\ & \ddots & &  \\ & & 1 &  \\ & & & \xi b \\
  \end{array} \right)
\]
in $GL_{n}(D)$. Then we have homomorphisms
\begin{eqnarray*}
f_1 \colon  GL_{nd}(\Delta) & \longrightarrow & GL_d(\Lambda), \\
   A  = \left( A_{ij} \right)_{1 \le i,j \le d} & \mapsto & \left( \Phi_2 A_{ij} \Phi_1 \right)_{1 \le i,j \le d}
\end{eqnarray*}
and 
\begin{eqnarray*}
  f_2 \colon GL_{d}(\Lambda) & \longrightarrow & GL_{nd}(\Delta), \\
   B  = \left( B_{ij} \right)_{1 \le i,j \le d}            & \mapsto & \left( \Phi_1 B_{ij} \Phi_2 \right)_{1 \le i,j \le d},
\end{eqnarray*}
where $A_{ij} \in M_{n}(\Delta)$ and $B_{ij} \in \Lambda$.
Note that the induced map $\bar f_{1} : GL_{nd}(\overline{\Delta}) \longrightarrow 
GL_{d}(\overline{\Lambda})$ 
is an
isomorphism with inverse $\bar f_{2}$. In summary, we have a commutative diagram
\[
\xymatrix{
GL_{nd}(\Delta)  \ar@<2pt>[r]^{f_1} \ar[d] & GL_d(\Lambda) \ar@<2pt>[l]^{f_2} \ar[d]^\pi \\
GL_{nd}(\overline{\Delta})  \ar@<2pt>[r]^{\bar f_1} & GL_d(\overline{\Lambda}) \ar@<2pt>[l]^{\bar f_2}
}
\]
where the lower horizontal arrows are isomorphisms. 
We set $k := dn$ and conclude that it suffices to compute a set
of representatives $U \subseteq GL_{k}(\Delta)$ of the image of the natural projection map 
$GL_{k}(\Delta) \longrightarrow GL_{k}(\overline{\Delta})$, which by abuse of notation we also denote by $\pi$.

\subsection{Computing a set of representatives of the map 
$\pi: GL_{k}(\Delta) \longrightarrow GL_{k}(\overline{\Delta})$}

We assume that $k>1$ and deal with the case $k=1$ in \S \ref{subsec:casek=1}.

We first recall some definitions from algebraic $K$-theory and refer the reader to 
\cite[\S 40]{MR892316} for more details. 
Let $R$ be a unital ring and let $m \in \N$. For $x \in R$ and $i,j \in \{1, \ldots m \}$ with $i \neq j$,
the elementary matrix $E_{ij}(x)$ is the matrix in $GL_{m}(R)$ that has $1$ in every diagonal entry, 
has $x$ in the $(i,j)$-entry and is zero elsewhere. 
Let $E_{m}(R)$ denote the subgroup of $GL_{m}(R)$ generated by all elementary matrices.
Let $E(R)$ and $GL(R)$ be the direct limits given by the obvious inclusions 
$E_{m}(R) \longrightarrow E_{m+1}(R)$ and $GL_{m}(R) \longrightarrow GL_{m+1}(R)$. 
Then $K_{1}(R)$ is defined to be the abelian group $GL(R)/E(R)$.

\begin{lemma}\label{lem:equal-projs}
We have 
$E_{k}(\overline{\Delta})=\pi(E_{k}(\Delta))=\pi(GL_{k}(\Delta) \cap E(\Delta))$.
\end{lemma}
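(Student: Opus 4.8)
The plan is to prove the chain of equalities
\[
E_{k}(\overline{\Delta}) = \pi\bigl(E_{k}(\Delta)\bigr) = \pi\bigl(GL_{k}(\Delta) \cap E(\Delta)\bigr)
\]
by establishing two inclusions for each equality, the only nontrivial one being the surjectivity of $\pi$ onto $E_{k}(\overline{\Delta})$ when restricted to $E_{k}(\Delta)$. First I would observe that the inclusions $\pi(E_{k}(\Delta)) \subseteq E_{k}(\overline{\Delta})$ and $\pi(E_{k}(\Delta)) \subseteq \pi(GL_{k}(\Delta) \cap E(\Delta))$ are immediate: $\pi$ is a ring-quotient-induced group homomorphism $GL_{k}(\Delta) \to GL_{k}(\overline{\Delta})$, so it sends the elementary matrix $E_{ij}(x)$ to $E_{ij}(\overline{x})$, hence maps $E_{k}(\Delta)$ into $E_{k}(\overline{\Delta})$; and since $E_{k}(\Delta) \subseteq GL_{k}(\Delta) \cap E(\Delta)$ (as $E_{k}(\Delta) \subseteq E(\Delta)$), the second inclusion follows as well. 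Likewise $\pi(GL_{k}(\Delta) \cap E(\Delta)) \subseteq E_{k}(\overline{\Delta})$ will follow because any element of $E(\Delta)$ that happens to lie in $GL_{k}(\Delta)$ is (by a standard stabilization argument, e.g.\ Whitehead's lemma / \cite[\S 40]{MR892316}) already a product of elementary matrices over $\Delta$ in size $k$ once $k$ is large enough — but to avoid relying on that subtlety I will instead deduce this inclusion as a consequence of the reverse inclusions once surjectivity onto $E_{k}(\overline{\Delta})$ is known, since then $E_{k}(\overline{\Delta}) = \pi(E_{k}(\Delta)) \subseteq \pi(GL_{k}(\Delta)\cap E(\Delta)) \subseteq E_{k}(\overline{\Delta})$ forces equality.

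The heart of the matter is therefore: every $\overline{g} \in E_{k}(\overline{\Delta})$ lifts to some $g \in E_{k}(\Delta)$ with $\pi(g) = \overline{g}$. This is where I would use that an element of $E_{k}(\overline{\Delta})$ is, by definition, a finite product $\prod_{\ell} E_{i_\ell j_\ell}(\overline{x_\ell})$ of elementary matrices over $\overline{\Delta}$. Since $\Delta \to \overline{\Delta} = \Delta/\mathfrak{g}\Delta$ is surjective (it is a quotient map of rings), each $\overline{x_\ell}$ lifts to some $x_\ell \in \Delta$; setting $g := \prod_{\ell} E_{i_\ell j_\ell}(x_\ell) \in E_{k}(\Delta)$ gives $\pi(g) = \overline{g}$ because $\pi$ is multiplicative and $\pi(E_{ij}(x_\ell)) = E_{ij}(\overline{x_\ell})$. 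This shows $E_{k}(\overline{\Delta}) \subseteq \pi(E_{k}(\Delta))$, which combined with the first paragraph gives $E_{k}(\overline{\Delta}) = \pi(E_{k}(\Delta))$, and then the squeeze argument above yields the remaining equality.

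The main (really the only) obstacle is making sure the lifted product $g$ actually lands in $GL_{k}(\Delta)$ — but this is automatic since each $E_{ij}(x_\ell)$ is manifestly invertible (with inverse $E_{ij}(-x_\ell)$), so $g \in E_{k}(\Delta) \subseteq GL_{k}(\Delta)$ with no further argument needed. The surjectivity $\Delta \twoheadrightarrow \overline{\Delta}$ is by construction. So in fact the lemma is essentially formal once one writes out the definitions of the elementary subgroups; I expect the write-up to be short, and the only thing to be careful about is not to conflate $E(\Delta) \cap GL_{k}(\Delta)$ with $E_{k}(\Delta)$ a priori — we get their images under $\pi$ to agree only after the squeeze, not by a direct inclusion of the groups themselves.
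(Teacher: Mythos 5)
Your first equality, $E_{k}(\overline{\Delta})=\pi(E_{k}(\Delta))$, is fine and is exactly the paper's "clear" step: elementary matrices lift because $\Delta\twoheadrightarrow\overline{\Delta}$ is surjective. The problem is the second equality. Your "squeeze" is circular: you write
\[
E_{k}(\overline{\Delta}) = \pi(E_{k}(\Delta)) \subseteq \pi\bigl(GL_{k}(\Delta)\cap E(\Delta)\bigr) \subseteq E_{k}(\overline{\Delta}),
\]
but the final inclusion $\pi(GL_{k}(\Delta)\cap E(\Delta))\subseteq E_{k}(\overline{\Delta})$ is precisely the nontrivial content of the lemma; it does not follow from the other two inclusions, so you cannot "deduce it as a consequence of the reverse inclusions" --- you have simply asserted it. Your fallback, that any element of $E(\Delta)\cap GL_{k}(\Delta)$ is already a product of size-$k$ elementary matrices over $\Delta$ "once $k$ is large enough", also does not work here: $k=nd$ is fixed (it can be as small as $2$), this is not Whitehead's lemma, and injective stability at level $k$ over $\Delta$ itself would require stable range bounds that an $\mathcal{O}_{F}$-order in a skew field does not satisfy in general.

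The missing idea is to apply stability over the \emph{quotient} rather than over $\Delta$: the finite ring $\overline{\Delta}$ is semilocal, hence has stable range $1$ by \cite[(40.31)]{MR892316}, and the Injective Stability Theorem \cite[(40.44)]{MR892316} then gives $E(\overline{\Delta})\cap GL_{k}(\overline{\Delta})=E_{k}(\overline{\Delta})$ for $k>1$. With this in hand, for $g\in GL_{k}(\Delta)\cap E(\Delta)$ one has $\pi(g)\in \pi(E(\Delta))\cap GL_{k}(\overline{\Delta})\subseteq E(\overline{\Delta})\cap GL_{k}(\overline{\Delta})=E_{k}(\overline{\Delta})=\pi(E_{k}(\Delta))$, and combined with the trivial inclusion $\pi(E_{k}(\Delta))\subseteq\pi(GL_{k}(\Delta)\cap E(\Delta))$ this yields the lemma. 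So your write-up needs this semilocal/stable-range-$1$ input for $\overline{\Delta}$; without it the second equality is unproved.
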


\begin{proof}
The first equality is clear.
The quotient ring $\overline{\Delta}$ is semilocal and so has stable range $1$ 
by \cite[(40.31)]{MR892316} (see \cite[(40.39)]{MR892316} for the definition of stable range).
Then by the Injective Stability Theorem (see \cite[(40.44)]{MR892316}), we have 
$E(\overline{\Delta}) \cap GL_{k}(\overline{\Delta}) = E_{k}(\overline{\Delta})$.

We consider $\pi$ to be the restriction of the natural projection map 
$GL(\Delta) \longrightarrow GL(\overline{\Delta})$, which we also denote by $\pi$. 
Then in $GL(\overline{\Delta})$ we have
\begin{eqnarray*}
\pi(E(\Delta) \cap GL_{k}(\Delta)) &\subseteq& \pi(E(\Delta)) \cap \pi(GL_{k}(\Delta)) \\
&=& E(\overline{\Delta}) \cap \pi(GL_{k}(\Delta)) \\
&\subseteq&  E(\overline{\Delta}) \cap GL_{k}(\overline{\Delta}) \\
&=& E_{k}(\overline{\Delta}) = \pi(E_{k}(\Delta)).
\end{eqnarray*}
However, it is clear that $E_{k}(\Delta) \subseteq GL_{k}(\Delta) \cap E(\Delta)$
and so $\pi(E_{k}(\Delta)) \subseteq \pi(GL_{k}(\Delta) \cap E(\Delta))$. 
As we have shown the reverse inclusion above, the desired equality now follows.
\end{proof}

\begin{lemma}\label{lemma:GL2-reps}
Let $U'$ be a set of representatives of the map $GL_{2}(\Delta) \longrightarrow K_{1}(\Delta)$. 
Then $\pi(GL_{k}(\Delta))$ is generated by $E_{k}(\overline{\Delta})$ and $\pi(U')$.
\end{lemma}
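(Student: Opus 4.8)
The plan is to reduce the statement to a short diagram chase built on Lemma~\ref{lem:equal-projs}, the one genuine ingredient being a surjective stability statement for $K_{1}$ of the arithmetic order $\Delta$.

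First I would record that the natural map $GL_{2}(\Delta) \longrightarrow K_{1}(\Delta)$ is surjective. Indeed, $\Delta$ is a module-finite $\mathcal{O}_{F}$-order and $\mathcal{O}_{F}$ has Krull dimension $1$, so by the Bass Stable Range Theorem $\Delta$ has stable range at most $2$; surjective stability (cf.\ \cite[\S 40]{MR892316}) then shows $GL_{m}(\Delta)/E_{m}(\Delta) \longrightarrow K_{1}(\Delta)$ is onto for all $m \geq 2$, and in particular for $m = 2$. Consequently the set $U'$, being a set of representatives of $GL_{2}(\Delta) \longrightarrow K_{1}(\Delta)$, meets every class of $K_{1}(\Delta)$.

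Next, let $H \subseteq GL_{k}(\overline{\Delta})$ denote the subgroup generated by $E_{k}(\overline{\Delta})$ and $\pi(U')$; the goal is $H = \pi(GL_{k}(\Delta))$. The inclusion $H \subseteq \pi(GL_{k}(\Delta))$ is immediate, since $E_{k}(\overline{\Delta}) = \pi(E_{k}(\Delta)) \subseteq \pi(GL_{k}(\Delta))$ by Lemma~\ref{lem:equal-projs}, since $\pi(U') \subseteq \pi(GL_{2}(\Delta)) \subseteq \pi(GL_{k}(\Delta))$ as $k \geq 2$, and since $\pi(GL_{k}(\Delta))$ is a subgroup of $GL_{k}(\overline{\Delta})$. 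For the reverse inclusion, fix $A \in GL_{k}(\Delta)$. By the surjectivity just established there is $g \in U'$ with $[g] = [A]$ in $K_{1}(\Delta)$; regarding $g$ as an element of $GL_{2}(\Delta) \subseteq GL_{k}(\Delta)$ via the standard embedding, this means precisely that $A g^{-1} \in GL_{k}(\Delta) \cap E(\Delta)$. Lemma~\ref{lem:equal-projs} then gives $\pi(A g^{-1}) \in E_{k}(\overline{\Delta})$, whence
\[
\pi(A) = \pi(A g^{-1}) \, \pi(g) \in E_{k}(\overline{\Delta}) \cdot \pi(U') \subseteq H.
\]
Therefore $\pi(GL_{k}(\Delta)) \subseteq H$, and combining the two inclusions proves the lemma.

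The only real obstacle is the surjectivity of $GL_{2}(\Delta) \longrightarrow K_{1}(\Delta)$; everything else is formal once Lemma~\ref{lem:equal-projs} is available. I would need to confirm the precise form of surjective stability (the correct hypothesis $m \geq \mathrm{sr}(\Delta)$ versus $m \geq \mathrm{sr}(\Delta)+1$) and pin down a clean reference for $\mathrm{sr}(\Delta) \leq 2$ for orders over Dedekind domains of arithmetic type; it is exactly the bound $\mathrm{sr}(\Delta) \leq 2$ that allows the lemma to be phrased with $GL_{2}$ rather than $GL_{3}$.
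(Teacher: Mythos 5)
Your argument is correct and is essentially the paper's proof: the paper likewise combines surjectivity of $GL_{2}(\Delta)\longrightarrow K_{1}(\Delta)$ (cited directly as \cite[(41.23)]{MR892316}, which rests on exactly the stable range considerations you invoke from \cite[\S 40]{MR892316}) with Lemma~\ref{lem:equal-projs} to conclude that $GL_{k}(\Delta)$ is generated by $U'$ and $GL_{k}(\Delta)\cap E(\Delta)$, whence the statement on images under $\pi$. Your explicit coset manipulation $\pi(A)=\pi(Ag^{-1})\,\pi(g)$ just spells out the same reduction, so no substantive difference or gap.
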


\begin{proof}
The map $GL_{k}(\Delta) \longrightarrow K_{1}(\Delta)$ is surjective by \cite[(41.23)]{MR892316} and so
\[
GL_{k}(\Delta)/(GL_{k}(\Delta) \cap E(\Delta)) \cong K_{1}(\Delta).
\]
Hence $GL_{k}(\Delta)$ is generated by $U'$ and $E(\Delta) \cap GL_{k}(\Delta)$, and so the result now follows from Lemma \ref{lem:equal-projs}.
\end{proof}

Let $\nr: GL_{k}(\Delta) \longrightarrow \mathcal{O}_{F}^{\times}$ 
denote the reduced norm map as defined in \cite[\S 7D]{MR632548} and write 
$\nr: K_{1}(\Delta) \longrightarrow \mathcal{O}_{F}^{\times}$ for the induced map. Then define
\begin{equation}\label{eqn:SL-SK-def}
SL_{k}(\Delta) := \{  x \in GL_{k}(\Delta) : \mathrm{nr}(x) = 1 \} \quad \textrm{ and } \quad
SK_{1}(\Delta) := \{  x \in K_{1}(\Delta) : \mathrm{nr}(x)=1 \}.
\end{equation}
Since $k=nd>1$, by (H2\textprime)(b) we have $\nr(\Delta^{\times})=\mathcal{O}_{F}^{\times +}$. 
Hence by (H2\textprime)(c) we can compute a set $V$ of representatives of the map
$\nr:\Delta^{\times} \longrightarrow \mathcal{O}_{F}^{\times +}$. 
Let $U$ be a set of representatives of the map $SL_{2}(\Delta) \longrightarrow SK_{1}(\Delta)$.
(We shall see how to compute $U$ in \S \ref{subsec:SK1-reps}.)

\begin{prop}\label{prop:final-proj-gen}
Assuming \emph{(H2\textprime)(b)}, $\pi(GL_{k}(\Delta))$ is generated by $E_{k}(\overline{\Delta})$, $\pi(V)$ and $\pi(U)$. 
(We consider $\Delta^{\times} = GL_{1}(\Delta)$ as a subgroup of $GL_{k}(\Delta)$ in the natural way.)
\end{prop}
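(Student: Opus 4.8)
The plan is to reduce, via Lemma \ref{lemma:GL2-reps}, to the assertion that the images in $K_{1}(\Delta)$ of the sets $V$ and $U$ together generate $K_{1}(\Delta)$, and then to deduce that from the reduced-norm exact sequence using hypothesis (H2\textprime)(b). Throughout, $V \subseteq \Delta^{\times} = GL_{1}(\Delta)$ and $U \subseteq SL_{2}(\Delta)$ are regarded as subsets of $GL_{2}(\Delta)$ (and hence of $GL_{k}(\Delta)$) via the natural inclusions.

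By Lemma \ref{lemma:GL2-reps}, $\pi(GL_{k}(\Delta))$ is generated by $E_{k}(\overline{\Delta})$ together with $\pi(U')$ for \emph{any} set $U'$ of representatives of the map $GL_{2}(\Delta) \longrightarrow K_{1}(\Delta)$. Since this projection is a group homomorphism, its restriction to the subgroup $\langle V \cup U \rangle \le GL_{2}(\Delta)$ has image equal to the subgroup of $K_{1}(\Delta)$ generated by the classes $[v]$ ($v \in V$) and $[u]$ ($u \in U$). Hence, if those classes generate all of $K_{1}(\Delta)$, we may choose $U'$ to consist of words in $V \cup U$; then $\pi(U') \subseteq \langle \pi(V), \pi(U) \rangle$, and combining this with Lemma \ref{lemma:GL2-reps} — and with Lemma \ref{lem:equal-projs} for the (trivial) reverse inclusion — the Proposition follows. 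So everything reduces to showing that $\{ [v] : v \in V \} \cup \{ [u] : u \in U \}$ generates $K_{1}(\Delta)$.

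For this, first note that $\nr(g) \in \mathcal{O}_{F}^{\times +}$ for every $g \in GL_{k}(\Delta)$: both $g$ and $g^{-1}$ lie in the $\mathcal{O}_{F}$-order $M_{k}(\Delta)$, so $\nr(g) \in \mathcal{O}_{F}^{\times}$, and by the Hasse-Schilling-Maass Norm Theorem $\nr(g) \in U(M_{k}(D)) = U(D)$, the last equality holding because a real prime of $F$ is ramified in $M_{k}(D)$ if and only if it is ramified in $D$. Thus $\nr$ induces a map $\nr : K_{1}(\Delta) \longrightarrow \mathcal{O}_{F}^{\times +}$ with kernel $SK_{1}(\Delta)$, by \eqref{eqn:SL-SK-def}. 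Now invoke (H2\textprime)(b): since $k = nd > 1$ it gives $\nr(\Delta^{\times}) = \mathcal{O}_{F}^{\times +}$, and $\nr(V)$ generates $\mathcal{O}_{F}^{\times +}$ by the choice of $V$; as the inclusion $\Delta^{\times} \hookrightarrow GL_{k}(\Delta)$ preserves reduced norms, the subgroup $\langle [v] : v \in V \rangle$ of $K_{1}(\Delta)$ surjects onto $\mathcal{O}_{F}^{\times +}$ under $\nr$. Since the classes $[u]$ ($u \in U$) generate $SK_{1}(\Delta)$ by the choice of $U$, any $x \in K_{1}(\Delta)$ can be written as $x = ( \prod_{i} [v_{i}]^{\varepsilon_{i}} ) \cdot y$ with $v_{i} \in V$, $\varepsilon_{i} \in \{ \pm 1 \}$ (chosen so that $\prod_{i} \nr([v_{i}])^{\varepsilon_{i}} = \nr(x)$) and $y \in SK_{1}(\Delta)$ a word in the $[u]$; this is the required generation.

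The point that needs care is the first reduction, and in particular the observation that (H2\textprime)(b) is precisely what guarantees that the $V$-part reaches all of $\mathcal{O}_{F}^{\times +}$ rather than only $\nr(\Delta^{\times})$. The remaining ingredients — that $\nr(GL_{k}(\Delta)) \subseteq \mathcal{O}_{F}^{\times +}$, that $U(M_{k}(D)) = U(D)$, and that the maps $GL_{2}(\Delta) \to K_{1}(\Delta)$ and $SL_{2}(\Delta) \to SK_{1}(\Delta)$ are surjective (the latter implicit in the choice of $U$ and taken up in \S\ref{subsec:SK1-reps}) — are standard and require no real work here.
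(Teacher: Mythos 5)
Your proof is correct and takes essentially the same route as the paper: after reducing via Lemma \ref{lemma:GL2-reps}, the paper likewise invokes the exact sequence $1 \to SK_{1}(\Delta) \to K_{1}(\Delta) \stackrel{\nr}{\to} \mathcal{O}_{F}^{\times +} \to 1$ and the surjectivity of $\nr\colon \Delta^{\times} \to \mathcal{O}_{F}^{\times +}$ guaranteed by (H2\textprime)(b). You merely make explicit some points the paper compresses into a citation of \cite[(45.15)]{MR892316} (e.g.\ that $\nr(GL_{k}(\Delta)) \subseteq \mathcal{O}_{F}^{\times +}$) or into ``the desired result now follows''.
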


\begin{proof}
By Lemma \ref{lemma:GL2-reps} we are reduced to showing that 
$\pi(U') \subseteq \langle \pi(V), \pi(U) \rangle$ where $U'$ is a set of representatives of
the map $GL_{2}(\Delta) \longrightarrow K_{1}(\Delta)$. 
By \cite[(45.15)]{MR892316} we have a short exact sequence
\[
1 \longrightarrow SK_{1}(\Delta) \longrightarrow K_{1}(\Delta) \stackrel{\mathrm{nr}}{\longrightarrow} \mathcal{O}_{F}^{\times +} \longrightarrow 1.
\]
However, the map $\nr : \Delta^{\times} \longrightarrow \mathcal{O}_{F}^{\times +}$ factors via 
$K_{1}(\Delta)$ and is surjective, and so the 
desired result now follows.
\end{proof}

\subsection{Computing a set of representatives of the map $SL_{2}(\Delta) \longrightarrow SK_{1}(\Delta)$}\label{subsec:SK1-reps}

We first recall that the map $GL_{2}(\Delta) \longrightarrow K_{1}(\Delta)$ is surjective by \cite[(41.23)]{MR892316};
hence by the definitions given in \eqref{eqn:SL-SK-def}, the map 
$SL_{2}(\Delta) \longrightarrow SK_{1}(\Delta)$ is also surjective .

Let $m$ denote the index of $D$, i.e.\ $[D : F] = m^{2}$.
For any prime $\mathfrak{p}$ of $F$, let $F_{\mathfrak{p}}$ denote the $\mathfrak{p}$-adic completion of $F$ and define $D_{\mathfrak{p}} := F_{\mathfrak{p}} \otimes_{F} D$. 
Following \cite[(45.14)]{MR892316}, we may write
\[
D_{\mathfrak{p}} \cong M_{\kappa_{\mathfrak{p}}}(\Omega_{\mathfrak{p}}) 
\quad \textrm{ and } \quad
[\Omega_{\mathfrak{p}} : F_{\mathfrak{p}}] = m_{\mathfrak{p}}^2,
\]
where $\Omega_{\mathfrak{p}}$ is a skew field with centre $F_{\mathfrak{p}}$ and index 
$m_{\mathfrak{p}}$. For each $\mathfrak{p}$, we have $m=\kappa_{\mathfrak{p}}m_{\mathfrak{p}}$,
so $m_{\mathfrak{p}}$ divides $m$.
We say that $\mathfrak{p}$ is ramified in $D$ if $m_{\mathfrak{p}}>1$, and unramified if 
$m_{\mathfrak{p}}=1$. (Note that the definition of ramification here depends crucially on the fact that $F$ is the centre of $D$; in what follows below, the notion of ramification in finite extensions of number fields is the usual one.)
Let $S_{ram}$ be the set of finite primes of $F$ that ramify in $D$, and note that
this is a finite set by \cite[(32.1)]{MR1972204}.
 
\begin{lemma}\label{lemma:W-field}
There exists a number field $W$ such that
\begin{itemize}
\item[(a)] $F \subseteq W \subseteq D$;
\item[(b)] $W/F$ is cyclic of degree $m$; and
\item[(c)] all primes in $S_{ram}$ are inert (i.e. unramified and non-split) in $W/F$.
\end{itemize}
\end{lemma}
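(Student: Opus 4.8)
The plan is to obtain $W$ as a cyclic splitting field of $D$ of degree exactly $m$, constructed via the Grunwald--Wang theorem, after first reducing the statement to a pure existence question for cyclic extensions.

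\emph{Step 1 (reduction).} A field extension $W/F$ of degree $m$ embeds into $D$ --- necessarily as a maximal subfield, since maximal subfields of $D$ have degree $m$ over $F$ --- if and only if $W$ splits $D$, i.e.\ $D \otimes_{F} W \cong M_{m}(W)$ (see \cite[\S 7]{MR632548}); and by the Albert--Brauer--Hasse--Noether theorem (see \cite[\S 32]{MR1972204}) this holds if and only if, for every place $v$ of $F$ and every place $w$ of $W$ above $v$, the local index $m_{v}$ of $D$ at $v$ divides $[W_{w}:F_{v}]$. Hence it suffices to construct a \emph{cyclic} extension $W/F$ of degree $m$ such that (i) every $\frp \in S_{ram}$ is inert in $W/F$, and (ii) every real place of $F$ ramified in $D$ becomes complex in $W$. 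Indeed, (i) gives $[W_{\frp}:F_{\frp}]=m$, which $m_{\frp}$ divides; (ii) gives local degree $2 = m_{v}$ at the ramified real places (which can occur only when $2 \mid m$); and $m_{v}=1$ at every remaining place, so no condition is needed there. Properties (a), (b), (c) then hold by construction.

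\emph{Step 2 (construction).} For each $\frp \in S_{ram}$ let $L_{\frp}$ be the unramified extension of $F_{\frp}$ of degree $m$; it is cyclic over $F_{\frp}$ of degree $m$. For each real place $v$ of $F$ ramified in $D$ let $L_{v} := \C$, which is cyclic over $F_{v}=\R$ of degree $2 \mid m$. Since all these prescribed local degrees divide $m$, the data $(L_{v})$ form admissible input for the Grunwald--Wang theorem, which yields a cyclic extension $W/F$ of degree $m$ whose completion at each chosen place is the prescribed $L_{v}$ --- \emph{provided one is not in the ``special case''} of that theorem.

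\emph{Step 3 (the special case is vacuous here).} The special case requires $2^{s} \mid m$ for some $s \geq 3$ with $F$ being $s$-special at the prime $2$; in particular it requires $\zeta_{2^{s}} \notin F$. But $m$ is the Schur index over $F=E(\chi)$ of the absolutely irreducible character $\chi$ affording $D=D_{\chi}$, so by the Benard--Schacher theorem (see \cite[\S 74B]{MR892316}) we have $\zeta_{m} \in F$; hence $\zeta_{2^{s}} \in F$ whenever $2^{s} \mid m$, so $F$ is never $s$-special. Therefore the special case cannot arise, Grunwald--Wang applies unconditionally, and the required $W$ is produced. (When $E=\Q$ this is immediate, since $\Q(\chi)=\Q$ forces $m \leq 2$.)

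The one delicate point is Step 3: one must rule out the special case of Grunwald--Wang, and this is exactly where it matters that $D$ is a Wedderburn component of a group algebra rather than an arbitrary central division $F$-algebra. For a general finite-dimensional semisimple algebra the index $m$ and the centre $F$ are unconstrained, the special case can genuinely occur, and the construction of $W$ via Grunwald--Wang may fail (compare Remark~\ref{rmk:algorithmic-Grunwald-Wang}).
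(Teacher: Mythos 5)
Your proof is correct and follows essentially the same route as the paper: a Grunwald--Wang construction prescribing inertness at $S_{ram}$ (and suitable behaviour at the real places), with the Benard--Schacher theorem ($\zeta_m \in F$, since $D$ comes from a group algebra) ruling out the special case, after which $W$ splits $D$ and embeds in it. The only cosmetic differences are that the paper ramifies \emph{all} real primes (so $W$ is totally imaginary) and cites \cite[(32.15) and (28.10)]{MR1972204} for the splitting and embedding steps, where you argue directly via the Albert--Brauer--Hasse--Noether local criterion.
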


\begin{proof}
We wish to apply Grunwald-Wang Theorem (see \cite[Theorem 9.2.8]{MR2392026}, for example).
However, we first have to check that we are not in the `special case'.
Suppose for a contradiction that we are in the `special case'; then 
by \cite[top of p.528]{MR2392026}, in particular we have
$m=2^{r}m'$ where $m'$ is odd and $r \geq 3$ and $F(\zeta_{2^{r}})/F$ is \emph{not} cyclic.
However, by the Benard-Schacher Theorem (see \cite{MR0302747} or \cite[(74.20)]{MR892316}) 
we must have $\zeta_{m} \in F$ and so $F(\zeta_{2^{r}})=F$, giving a contradiction.
Hence we may apply the Grunwald-Wang Theorem to show that 
there exists a cyclic extension $W/F$ of degree $m$
in which all primes in $S_{ram}$ are inert, and every real prime is ramified
(so $W$ is totally imaginary).
Now \cite[(32.15)]{MR1972204} shows that $W$ is a splitting field for $D$ and
\cite[(28.10)]{MR1972204} shows that $W$ can be embedded in $D$.
\end{proof}

\begin{remark}\label{rmk:algorithmic-Grunwald-Wang}
If $D$ is a quaternion algebra (i.e.\ $m=2$) then the required field $W$ is a quadratic extension of $F$ and can be found easily in practice - this has been done for all Wedderburn components of group rings $\Q[G]$ where $G$ is a generalised quaternion group with $|G| < 48$ (see the sample file). 
In the general case, one can employ the algorithmic `weak' Grunwald-Wang Theorem of
\cite[Algorithm 13] {MR2531221}.  As stated, this algorithm does not control ramification at primes above $2$, but this is only a problem in the aforementioned `special case'; as shown in the proof of Lemma \ref{lemma:W-field}, this case does not occur in the situation of interest to us.
\end{remark}

Let $\mathcal{T}$ be the category of finitely generated $\mathcal{O}_{F}$-torsion $\Delta$-modules.
For each finite prime $\mathfrak{p}$ of $\mathcal{O}_{F}$, set 
$\Delta_{\mathfrak{p}} := \mathcal{O}_{F_{\mathfrak{p}}} \otimes_{\mathcal{O}_{F}} \Delta$ 
(so $\Delta_{\mathfrak{p}}$ is a maximal $\mathcal{O}_{F_{\mathfrak{p}}}$-order in 
$D_{\mathfrak{p}}$), and let $\mathcal{T}_{\mathfrak{p}}$ be the category of finitely generated $\mathfrak{p}$-torsion $\Delta$-modules. 
Let $\varepsilon_{\mathfrak{p}}: K_{1}(\mathcal{T}_{\mathfrak{p}}) \longrightarrow K_{1}(\Delta_{\mathfrak{p}})$ be the map defined in the proof of \cite[(45.13)]{MR892316},
where $\varepsilon_{\mathfrak{p}}$ is denoted $\varepsilon$ and it is shown that 
$SK_{1}(\Delta_{\mathfrak{p}}) = \Image(\varepsilon_{\mathfrak{p}}$).
Define $\varepsilon: K_{1}(\mathcal{T}) \longrightarrow K_{1}(\Delta)$ analogously to 
$\varepsilon_{\mathfrak{p}}$; in the proof of \cite[(45.15)]{MR892316} $\varepsilon$ 
is unlabelled and it is shown that $SK_{1}(\Delta) = \Image(\varepsilon)$.
Furthermore, there is a canonical isomorphism 
$K_{1}(\mathcal{T}) \cong \coprod_{\mathfrak{p}} K_{1}(T_{\mathfrak{p}})$.
Therefore we have a commutative diagram
\[
\xymatrix{
\coprod K_{1}(\mathcal{T}_{\mathfrak{p}})  \ar[r]^{\cong}  \ar[d]^{\coprod \varepsilon_{\mathfrak{p}}} 
&  K_{1}(\mathcal{T}) \ar[d]^{\varepsilon} \\
\coprod SK_{1}(\Delta_{\mathfrak{p}})  \ar[r]^{\cong} & SK_{1}(\Delta),
}
\]
where the vertical maps are surjective.

Write $S_{ram} = \{ \mathfrak{p}_{1}, \ldots, \mathfrak{p}_{s} \}$.
To ease notation, we abbreviate $m_{\mathfrak{p}_{i}}$ to $m_{i}$, etc.
By \cite[(45.15)]{MR892316}, we in fact have that
$SK_{1}(\Delta) \cong \coprod_{i=1}^{s} SK_{1}(\Delta_{\mathfrak{p}_{i}})$ 
and each
$SK_{1}(\Delta_{\frp_i})$ is cyclic of order $(q_{i}^{m_{i}} - 1) / (q_{i} -1)$,
where $q_{i} := | \mathcal{O}_{F} / \mathfrak{p}_{i} |$. 
It is also shown that $K_{1}(\mathcal{T}_{\mathfrak{p}_{i}})$ is cyclic of order $q^{m_{i}}-1$.
Hence our strategy shall be as follows: for each $i$, find an element of 
$K_{1}(\mathcal{T}_{\mathfrak{p}_{i}})$ that maps to a generator of 
$SK_{1}(\Delta_{\mathfrak{p}_{i}})$, and compute a representative in $SL_{2}(\Delta)$
of this generator.

We now fix $i \in \{ 1, \ldots, s \}$.
Let $W_{i} \subseteq W$ denote the unique subfield with $[W_{i} : F] = m_i$ and 
write $\sigma_{i} \in \Gal(W/F)$ for the Frobenius substitution associated to $\mathfrak{p}_{i}$.
Write $\mathfrak{P}_{i}$ for the unique prime of $W$ above $\mathfrak{p}_{i}$ and 
$\xi_{i} \in \mathcal{O}_{W} / \mathfrak{P}_{i}$ for a primitive root of unity of order $q_{i}^{m_{i}} - 1$. 
If $\hat{\mathfrak{p}}_{i}$ denotes the unique prime of $W_{i}$ lying over $\mathfrak{p}_{i}$, 
then we can in fact choose $\xi_{i} \in \mathcal{O}_{W_{i}} / \hat{\mathfrak{p}}_{i}$.

By the Skolem-Noether Theorem (see \cite[(7.21)]{MR1972204}) there exists $\alpha_{i} \in \Delta$ such that
$\beta^{\sigma_{i}} = \alpha_{i} \beta \alpha_{i}^{-1}$ for all $\beta \in W$.
Such an element $\alpha_{i}$ can be computed as follows.
Fix an $F$-basis of $\beta_{1}, \ldots, \beta_{m}$ of $W$ and an $F$-basis
$\omega_{1}, \ldots, \omega_{m^2}$ of $D$. 
Write $\alpha_{i} = x_{1} \omega_{1} + \cdots + x_{m^{2}}\omega_{m^{2}}$ where 
$x_{1}, \ldots, x_{m^{2}}$ are elements of $F$ to be determined.
Then $\beta^{\sigma_{i}} = \alpha_{i} \beta \alpha_{i}^{-1}$ for all $\beta \in W$ 
is equivalent to  $\alpha_{i} \beta_{j}^{\sigma_{i}} = \alpha_{i} \beta_{j}$ for
$j = 1, \ldots, m$. 
Hence we have a system of linear equations for $x_{1}, \ldots, x_{m^{2}}$, which can be easily solved using standard algorithms. 
Once a solution is found, it only remains to clear denominators to ensure that $\alpha_{i} \in \Delta$.

The following construction is inspired by the proof of \cite[(45.13)]{MR892316}.
Choose any non-zero $\rho_{i} \in \alpha_{i} \Delta \cap W$ and
let $\{ \mathfrak{Q}_{1}, \ldots, \mathfrak{Q}_{t} \}$ 
be the union of $\{ \mathfrak{Q} \text{ divides } (\rho_{i}) \}$ and $\{ \mathfrak{P}_{i} \}$.
Assume that $\mathfrak{Q}_{1} = \mathfrak{P}_{i}$ and apply the Chinese Remainder Theorem
(one can use the $\texttt{CRT}$ function of Magma; 
also see \cite[Proposition 1.3.11]{MR1728313})
to compute $\eta_{i} \in \mathcal{O}_{W}$ such that
\[
\eta_{i} \equiv \xi_{i}^{\sigma_{i}} \pmod{\mathfrak{P}_{i}}
\quad \textrm{ and } \quad
\eta_{i} \equiv 1 \pmod{\mathfrak{Q}_{j}} \quad \textrm{ for } \quad j = 2, \ldots, t.
\]
We set $\omega_{i} := \eta_{i}^{\sigma_{i}^{-1}}$.
Then we have
\[
\omega_{i}^{\sigma_{i}} = \eta_{i} \equiv \xi_{i}^{\sigma_{i}} \equiv \xi_{i}^{q_{i}} \pmod{\mathfrak{P}_{i}}, \quad
\omega_{i} \equiv \xi_{i} \pmod{\mathfrak{P}_{i}}, \quad
\textrm{ and } \quad
(\omega_{i}^{\sigma_{i}}, \rho_{i}) = \mathcal{O}_{W}.
\]

Now for each $i \in \{1, \ldots, s\}$ we have 
$\omega_{i}^{\sigma_{i}} \alpha_{i} = \alpha_{i} \omega_{i}$ by definition of $\alpha_{i}$.
Hence we have a commutative diagram with exact rows
\[
\xymatrix{
0 \ar[r] & \Delta_{\frp_i} \ar[r]^{\alpha_i}  \ar[d]^{\omega_i} & 
\Delta_{\frp_i} \ar[r]  \ar[d]^{\omega^{\sigma_i}_i} & 
\Delta_{\frp_i} /\alpha_i\Delta_{\frp_i} \ar[r] \ar[d]^{\tau_i} & 0 \\
0 \ar[r] & \Delta_{\frp_i} \ar[r]^{\alpha_i}  & \Delta_{\frp_i} \ar[r]  & 
\Delta_{\frp_i} /\alpha_i\Delta_{\frp_i} \ar[r] & 0, \\
}
\]
where the maps in the left-hand square are induced by left multiplication and 
the map $\tau_{i}$ is induced by the middle vertical map. 
Note that all vertical maps are isomorphisms.
It follows that 
$[\Delta_{\mathfrak{p}_{i}} / \alpha_{i} \Delta_{\mathfrak{p}_{i}}, \tau_{i}]
\in K_{1}(\mathcal{T}_{\mathfrak{p}_{i}})$ is mapped by 
$\varepsilon_{\mathfrak{p}_{i}}$ to 
$[\Delta_{\mathfrak{p}_{i}},\omega_{i}^{\sigma_{i} - 1}] \in SK_{1}(\Delta_{\mathfrak{p}_{i}})$.
There is a natural homomorphism 
$K_{1}(\Delta_{\mathfrak{p}_{i}}) \longrightarrow
 K_{1}(\Delta_{\mathfrak{p}_{i}} / \mathrm{rad} \Delta_{\mathfrak{p}_{i}}) \cong
\left( \mathcal{O}_{W_{i}} / \hat{\mathfrak{p}}_{i} \right)^{\times} = \langle \xi_{i} \rangle$,
under which $[\Delta_{\mathfrak{p}_{i}},\omega_{i}^{\sigma_{i} - 1}]$ maps to $\xi_{i}^{q_{i}-1}$.
However, both $SK_{1}(\Delta_{\mathfrak{p}_{i}})$ and $\langle \xi_{i}^{q_{i}-1} \rangle$
are cyclic of order $(q_{i}^{m_{i}} - 1) / (q_{i} -1)$.
Therefore 
$\varepsilon_{\mathfrak{p}_{i}}([\Delta_{\mathfrak{p}_{i}} / \alpha_{i} \Delta_{\mathfrak{p}_{i}}, \tau_{i}]) = [\Delta_{\mathfrak{p}_{i}},\omega_{i}^{\sigma_{i} - 1}]$
generates $SK_{1}(\Delta_{\mathfrak{p}_{i}})$.

It remains to compute a representative in $SL_{2}(\Delta)$ of the image of
\[
\left[ \Delta_{\frp_i} / \alpha_i \Delta_{\frp_i}, \tau_i \right] \in K_{1}(\calT_{\frp_i})
\subseteq \coprod K_{1}(\calT_{\frp}) \cong K_{1}(\calT)
\]
under the map $\varepsilon: K_{1}(\mathcal{T}) \longrightarrow K_{1}(\Delta)$. To that end, 
it suffices to construct a commutative diagram of the form
\[
\xymatrix{
0 \ar[r] & \Delta^{2} \ar[r]  \ar[d] & 
\Delta^{2} \ar[r]  \ar[d] & 
\Delta_{\mathfrak{p}_{i}} /\alpha_{i}\Delta_{\mathfrak{p}_{i}} \ar[r] \ar[d]^{\tau_{i}} & 0 \\
0 \ar[r] & \Delta^{2} \ar[r]  & \Delta^{2} \ar[r]  & 
\Delta_{\mathfrak{p}_{i}} /\alpha_{i}\Delta_{\mathfrak{p}_{i}} \ar[r] & 0 \\
}
\]
where all vertical maps are isomorphisms and the rows are exact. 
For this it is enough to construct the middle vertical isomorphism.

Compute $\beta, \eta \in \mathcal{O}_{W}$ such that $\beta\omega_{i}^{\sigma_{i}} - \eta\rho_{i} = 1$
(use  \cite[Algorithm 1.3.2]{MR1728313}) and consider the commutative diagram
\[
\xymatrix{
0 \ar[r] & \Delta^{2} \ar[rr]^{
  \Tiny \left(\begin{array}{cc}
    \alpha_{i} & 0 \\ 0 & 1
  \end{array}\right)} \ar[d]^{S_{i}} & &
\Delta^{2} \ar[rr]^{(1 \, 0) \quad}  \ar[d]^{T_{i}} & &
\Delta_{\mathfrak{p}_{i}} /\alpha_{i}\Delta_{\mathfrak{p}_{i}} \ar[r] \ar[d]^{\tau_{i}} & 0 \\
0 \ar[r] & \Delta^{2} \ar[rr]_{\Tiny\left(\begin{array}{cc}
    \alpha_{i} & 0 \\ 0 & 1
  \end{array}\right)}  & & \Delta^{2} \ar[rr]_{(1 \, 0) \quad}  & &
\Delta_{\mathfrak{p}_{i}} /\alpha_{i}\Delta_{\mathfrak{p}_{i}} \ar[r] & 0,
}
\]
with matrices
\[
S_{i} := \left(
  \begin{array}{cc}
    \omega_i & \alpha_i^{-1}\rho_i\\ \eta\alpha_i & \beta
  \end{array} \right)
\quad \textrm{ and } \quad
T_{i} := \left(
  \begin{array}{cc}
    \omega_i^{\sigma_i} & \rho_i\\ \eta & \beta
  \end{array} \right).
\]
Note that $S_{i} \in M_{2}(\Delta)$ by our choice of $\rho_{i}$ and that the middle vertical map is an
isomorphism because $T_{i} \in GL_{2}(\mathcal{O}_{W})$ by our choice of $\beta$ and $\eta$. 
It follows that
\[
\varepsilon\left( \left[ \Delta_{\frp_i} / \alpha_i \Delta_{\frp_i}, \tau_i \right] \right) 
= \left[ \Delta^{2}, S_{i}^{-1}T_{i} \right].
\]
Hence $SK_1(\Delta)$ is generated by the classes 
$[ \Delta^{2}, S_{i}^{-1}T_{i}]$ for $i = 1, \ldots, s$, and so it is now straightforward 
to compute a set respresentatives of $SL_{2}(\Delta) \longrightarrow SK_{1}(\Delta)$.

\subsection{The case $k=1$}\label{subsec:casek=1}
We have to compute a set of  representatives of the natural projection map 
$\pi: \Delta^{\times} \longrightarrow \overline{\Delta}^{\times}$. 
If we can compute a set of generators of $\Delta^{\times}$,
it is straightforward to compute a set of representatives of $\pi$ 
(see \S \ref{subsec:norm-unit-reps}). 
Otherwise, we can and do assume that $\nr(\Delta^{\times}) = \mathcal{O}_{F}^{\times +}$
by (H2\textprime)(c) and proceed as follows.

We can compute a set of representatives $U$ of the natural map $\theta: SL_{2}(\Delta) \longrightarrow SK_{1}(\Delta)$ by the method of \S \ref{subsec:SK1-reps}. 
Furthermore, by (H2\textprime)(c), we can compute a set of representatives $V$ of the reduced norm map $\nr : \Delta^{\times} \longrightarrow \mathcal{O}_{F}^{\times +}$.
Let $\Delta_{1}^{\times}$ denote the kernel of this map. Let $\pi: GL_{2}(\Delta) \longrightarrow GL_{2}(\overline{\Delta})$ and $\iota:\Delta^{\times} \longrightarrow K_{1}(\Delta)$ denote the natural maps. 

Let $w \in \Delta^{\times}$. Then there exists an element $v \in V$ such that $\nr(v) = \nr(w)$; 
hence $a:=wv^{-1} \in \Delta_{1}^{\times}$ and so 
$\iota(a) \in \iota(\Delta_{1}^{\times}) \subseteq SK_{1}(\Delta)$. 
Now there exists $u \in U$ such that $\theta(u) = \iota(a)$ in $SK_{1}(\Delta)$. 
Then we have
\begin{eqnarray*}
u &\equiv& \left(
    \begin{array}{cc}
      a&0\\0&1
    \end{array}\right) \pmod{GL_{2}(\Delta) \cap E(\Delta)},\\
\text{so } \quad \pi(u) &\equiv& \left(
    \begin{array}{cc}
      \bar a&0\\0&1
    \end{array}\right) \pmod{E_{2}(\overline{\Delta})} \quad \text{ by Lemma \ref{lem:equal-projs},}\\
\text{and hence}&& \left(
    \begin{array}{cc}
      \bar a&0\\0&1
    \end{array}\right) \in S := \langle \pi(U), E_2(\overline{\Delta})) \rangle.
\end{eqnarray*}
It is straightforward to compute the finite set $S$.
Let $\overline{V}'$ denote the set of elements in $S$ which are of the form
$ \left(
    \begin{array}{cc}
      \bar a&0\\0&1
    \end{array}\right)$. Then $\pi(\Delta^{\times})$ is generated by $\pi(V)$ and $\overline{V}'$.

\subsection{Step (7) of Algorithm \ref{alg:the-alg}}

\renewcommand{\labelenumi}{(\roman{enumi})} 
Input: $d,n \in \N$; $D$ a skew field that is central and finite-dimensional over a number field $F$; 
$\mathfrak{g}$ a non-zero ideal of  $\mathcal{O}_{F}$; $\Delta$ a maximal $\mathcal{O}_{F}$-order in $D$; 
$\Lambda=\Lambda_{\mathfrak{a},n}$ for some right $\Delta$-ideal $\mathfrak{a}$, a nice maximal 
$\mathcal{O}_{F}$-order in $M_{n}(D)$.
\begin{enumerate}
\item Set $\overline{\Delta}:=\Delta/\mathfrak{g}\Delta$ and 
$\overline{\Lambda}:=\Lambda/\mathfrak{g}\Lambda$.
\item Suppose $\nr(\Delta^{\times}) \neq \mathcal{O}_{F}^{\times +}$. 
Then $nd=1$ by (H2\textprime)(b), and by (H2\textprime)(c) we can compute generators for $\Delta^{\times}$. 
It is then straightforward to compute a set of representatives for $\pi: \Delta^{\times} \longrightarrow \overline{\Delta}^{\times}$. 
\item We are now reduced to the case $\nr(\Delta^{\times})=\mathcal{O}_{F}^{\times +}$.
By (H2\textprime)(c) we can compute a set of representatives $V$ of 
$\nr:\Delta^{\times} \longrightarrow \mathcal{O}_{F}^{\times +}$.
By \S \ref{subsec:SK1-reps} we can also compute a set of representatives $U$ of 
$SL_{2}(\Delta) \longrightarrow SK_{1}(\Delta)$.
\item If $nd=1$, then we proceed by the method of \S \ref{subsec:casek=1}.
\item It remains to consider the case $nd>1$. 
By \S \ref{subsec:reduction-step} we are reduced to computing a set of representatives of
$\pi : GL_{nd}(\Delta) \longrightarrow GL_{nd}(\overline{\Delta})$.
\item $E_{nd}(\overline{\Delta})$ is generated by the elementary matrices $E_{ij}(b_{ijk})$
for $i,j \in \{ 1, \ldots, nd \}$, $i \neq j$, where for fixed $i,j$, $\{ b_{ijk} \}$ is a $\Z$-spanning
set for $\overline{\Delta}$.
\item By Proposition \ref{prop:final-proj-gen}, we now have an explicit generating set for 
$\pi(GL_{nd}(\Delta))$, and so it is now straightforward to compute the desired set of representatives.
\end{enumerate}

\section{Implementation and computational results}\label{sec:comp-results}

Let $L/K$ be a finite Galois extension of number fields with Galois group $G$. 
Let $E$ be a subfield of $K$ and set $d:=[K:E]$. 
As discussed in the introduction, Algorithm \ref{alg:the-alg} can be applied in the
situation $X=\mathcal{O}_{L}$ and $\mathcal{A}=\mathcal{A}(E[G];\mathcal{O}_{L})$.
This is implemented in Magma (\cite{MR1484478}) for certain groups $G$ in the case $K=E=\Q$.
The source code, instructions, and input files are available from
\[
\texttt{http://www.mathematik.uni-kassel.de/$\sim$bley} .
\]

The cases in which $G$ is abelian, dihedral, or $G=A_{4}, S_{4}$ were already implemented based
on the special case of Algorithm \ref{alg:the-alg} presented in \cite{MR2422318}. In the case of 
$G=S_{4}$, the method of \cite[\S 7]{MR2422318} was used to speed up the enumeration.
The more general version of Algorithm \ref{alg:the-alg} is now also implemented for $G=Q_{4n}$ (the generalised quaternion group of order $4n$) and $G=Q_{8} \times C_{2}$. 
In all cases, the running time is reasonable for $|G| \leq 16$.

Assuming that $\mathcal{O}_{L}$ is locally free over $\mathcal{A}$, 
the class group methods described in \cite{MR2564571} allow us to compute the class of 
$\mathcal{O}_{L}$ in the locally free class group $\mathrm{cl}(\mathcal{A})$; in particular, 
we are able to determine whether $\mathcal{O}_{L}$ is stably free over $\mathcal{A}$.
In the case that $\mathcal{A}$ has locally free cancellation (see \S \ref{subsec:loc-free-cancel})
stably free is equivalent to free and we are therefore able to use the class group methods to check the correctness of our implementation of Algorithm \ref{alg:the-alg}: we must eventually 
find a generator for $\mathcal{O}_{L}$ over $\mathcal{A}$ if and only if the class of $\mathcal{O}_{L}$ 
is trivial in $\mathrm{cl}(\mathcal{A})$. (Note that $\mathcal{A}$ having locally free cancellation is in general different from (H2\textprime)(a).)
Once a generator has been computed, it is easy to verify the correctness of the computation.

The class group methods of \cite{MR2564571} are only implemented in the case $\mathcal{A}=\Z[G]$.
If $L/\Q$ is at most tamely ramified then it is well-known that $\mathcal{A}=\Z[G]$ and 
$\mathcal{O}_{L}$ is locally free over $\Z[G]$. The above check is therefore implemented 
in this setting with $G=Q_{8}, Q_{12}, Q_{16}$ or $Q_{20}$, in which case
$\Z[G]$ has locally free cancellation by \cite[Theorem I]{MR703486} 
(also see \cite[p.327, (1)]{MR892316}).

In \cite{MR1313877}, Cougnard gives an example of a tamely ramified $Q_{32}$-extension $L/\Q$ for which $\mathcal{O}_{L}$ is stably free but not free over $\Z[Q_{32}]$. 
In this case (H2\textprime) is satisfied (see Proposition \ref{prop:h2prime-holds}(iv)), 
but unfortunately the extension is too large for our implementation to verify
in a reasonable amount of time that a generator does not exist.

In \cite{MR1827291}, examples are given of tamely ramified $Q_{8} \times C_{2}$-extensions $L/\Q$ for which $\mathcal{O}_{L}$ is stably free but not free over $\Z[Q_{8} \times C_{2}]$ 
(this is the smallest group for which the cancellation property fails - see \cite[p.327]{MR892316}). 
In the sample file we applied our algorithm to Cougnard's examples.
This provides an excellent check for the validity of our implementation (for details see the sample file).

\section{Acknowledgments}\label{acknowledgments}

The authors would like to thank: Claus Fieker for his help regarding optimisation of certain Magma commands and for useful discussions regarding \cite{MR2531221}; J\"urgen Kl\"uners for his help in finding polynomials to give generalised quaternion extensions; John Voight for helpful discussions regarding \cite{MR2592031}; and the referee for several helpful comments. 

\bibliography{CompGensII_Bib}{}
\bibliographystyle{amsalpha}

\end{document}